\documentclass[10pt,a4paper]{article}
\usepackage[english]{babel}
\usepackage{amssymb}
\usepackage{amsfonts}
\usepackage{amsmath}
\usepackage{color}
\usepackage{amsthm}
\usepackage{graphicx}
\usepackage{upgreek}
\usepackage{multirow}
\usepackage[a4paper]{geometry}
\usepackage{algorithm}
\usepackage{algorithmic}
\usepackage{tabularx}
\usepackage{stmaryrd}
\usepackage{tikz}
\usepackage{appendix}
\usepackage{hyperref}
\usepackage{a4wide}

\newcommand{\be}{\begin{equation}}
\newcommand{\ee}{\end{equation}}
\newcommand{\bi}{\begin{itemize}}
\newcommand{\ei}{\end{itemize}}
\newcommand{\norm}[1]{\left\Vert{#1}\right\Vert} 

\newtheorem{proposition}{Proposition}
\newtheorem{definition}{Definition}
\newtheorem{corollaire}{Corollary}
\newtheorem{lemme}{Lemma}

\newtheorem{theoreme}{Theorem}
\newtheorem{remarque}{Remark}

\newcommand{\prox}{\text{prox}}
\newcommand{\N}{\mathbb{N}}

\newcommand{\R}{\mathbb{R}}

\newcommand{\xn}{(x_n)_{n\in\N}}

\def\arg{\textup{arg}\,}
\newcolumntype{C}{>{\centering}X}
\newcommand{\off}[1]{}

\date{}
\begin{document}
\title{Strong Convergence of FISTA Iterates under H\"olderian and Quadratic Growth Conditions}
\author {J.-F. Aujol\footnote{Univ. Bordeaux, Bordeaux INP, CNRS, IMB, UMR 5251, F-33400 Talence, France}
\and 
C. Dossal\footnote{IMT, Univ. Toulouse, INSA Toulouse, Toulouse, France}
\and
H. Labarri\`ere\footnote{MaLGa, DIBRIS, Universit\`a di Genova, Genoa, Italy}
\and 
A. Rondepierre\footnotemark[2] \footnote{LAAS, Univ. Toulouse, CNRS, Toulouse, France}\\
}

\date{\today}
\maketitle
\begin{abstract}
    Introduced by Beck and Teboulle in \cite{beck2009fast}, FISTA (for Fast Iterative Shrinkage-Thresholding Algorithm) is a first-order method widely used in convex optimization. Adapted from Nesterov's accelerated gradient method for convex functions \cite{nesterov1983method}, the generated sequence guarantees a decay of the function values of $\mathcal{O}\left(n^{-2}\right)$ in the convex setting. We show that for coercive functions satisfying some local growth condition (namely a H\"olderian or quadratic growth condition), this sequence strongly converges to a minimizer. This property, which has never been proved without assuming the uniqueness of the minimizer, is associated with improved convergence rates for the function values. The proposed analysis is based on a preliminary study of the Asymptotic Vanishing Damping system introduced by Su et al. in \cite{su2014differential} to model Nesterov's accelerated gradient method in a continuous setting. Novel improved convergence results are also shown for the solutions of this dynamical system, including the finite length of the trajectory under the aforementioned geometry conditions.
\end{abstract}

\section{Introduction}

Fast Iterative Shrinkage-Thresholding Algorithm (FISTA) is a well-known scheme introduced by Beck and Teboulle in \cite{beck2009fast} for the minimization of convex composite functions. Considering a convex function $F:\mathcal{H}\rightarrow \R$ where $\mathcal{H}$ is a Hilbert space, $F$ is called composite if it can be written $F=f+h$ where $f$ is a convex differentiable function having a $L$-Lipchitz gradient and $h$ is is a proper lower semicontinuous (l.s.c.) convex function. 

This method uses inertia to achieve acceleration, based on the ideas proposed by Nesterov in the convex setting \cite{nesterov1983method}. While the classical proximal gradient method (also called Forward-Backward \cite{combettes2005signal}) guarantees a decrease of the error of order $\mathcal{O}\left(n^{-1}\right)$, FISTA builds a sequence $\xn$ which ensures that if $F$ is a convex composite function, then
\begin{equation}\label{eq:rate_FISTA}
    F(x_n)-F^*\leqslant \frac{2L\|x_0-x^*\|^2}{(n+1)^2},
\end{equation}
for any minimizer $x^*$ of $X^*$ where $F^*=\min_{x\in\mathcal{H}}F(x)$. 
The question of the convergence of FISTA iterates remained unanswered for a few years before Chambolle and D. show in \cite{chambolle2015convergence} that for a slightly modified inertial term depending on a non negative real number $\alpha>3$, the sequence $\xn$ weakly converges to a minimizer of $F$. This $\mathcal{O}\left(n^{-2}\right)$ rate can be improved to a $o\left(n^{-2}\right)$ rate for this variation of FISTA proposed by Chambolle and D. as demonstrated by Attouch and Peypouquet in \cite{attouch2016rate} but no first order method can guarantee a decrease of the error faster than this rate for this class of functions as shown in \cite{nesterov2003introductory}. 

Better convergence guarantees can be proven by making stronger assumptions on the function $F$ and by conveniently adjusting the inertial parameter. Su et al. \cite{su2014differential} show that FISTA iterates can achieve a rate of $F(x_n)-F^*=\mathcal{O}\left(n^{-3}\right)$ for strongly convex functions. Attouch and Cabot \cite{attouch2018fast} improve this result as they prove that the error decreases as $\mathcal{O}\left(n^{-\frac{2\alpha}{3}}\right)$ for any $\alpha>0$ as long as $F$ has a strong minimum, i.e. $F$ has a unique minimizer $x^*$ and a \textbf{global quadratic growth}:
\begin{equation}
    \exists \mu>0,~\forall x\in\mathcal{H}, ~\frac{\mu}{2}\|x-x^*\|^2\leqslant F(x)-F^*. \label{eq:G2mu_intro}
\end{equation}
An additional flatness condition which requires the differentiability of $F$ allows to strengthen this convergence guarantee as shown in \cite{apidopoulos2021convergence,aujol2023fista}. In \cite{apidopoulos2021convergence}, Apidopoulos et al. give an improved convergence rate of the error under the aforementioned flatness condition, a uniqueness assumption on the minimizer $x^*$ of $F$ and a \textbf{H\"olderian error bound} hypothesis:
\begin{equation}
    \exists \gamma>2,~\exists K>0,~\forall x\in B(x^*,\varepsilon), ~K\|x-x^*\|^\gamma\leqslant F(x)-F^*.\label{eq:holder_intro}
\end{equation}

The works mentioned above mainly focus on finding the fastest convergence rate and since every improved result relies on the hypothesis that $F$ has a unique minimizer $x^*$, the strong convergence of FISTA iterates is actually trivial (under these hypotheses, $F(x_n)-F^*\rightarrow0$ implies that $\|x_n-x^*\|\rightarrow0$).

This observation is also true when considering the study of the corresponding ordinary differential equation (ODE) i.e. Asymptotic Vanishing Damping system (AVD) defined by
\begin{equation}\label{eq:avd_intro}
    \ddot{x}(t)+\frac{\alpha}{t}\dot{x}(t)+\nabla F(x(t))=0.\tag{AVD}
\end{equation}
Introduced by Su et al. in \cite{su2014differential} as a system which can be discretized to recover Nesterov's accelerated gradient method, this ODE shares most of its convergence properties with FISTA iterates. Several papers (see \cite{attouch2018convergence,aujol2019optimal,aujol2023fista,Luo2023}) are devoted to its analysis under geometry assumptions and most of the fast convergence results require $F$ to have a unique minimizer $x^*$ which automatically guarantees that $\|x(t)-x^*\|\rightarrow0$.

In this paper, we analyse theoretically the behavior of FISTA iterates and its corresponding ODE under H\"olderian and quadratic growth assumptions without any hypothesis on the uniqueness of the minimizer. Indeed in this geometrical setting, the strong convergence of the iterates $(x_n)_{n\in\N}$ (resp the trajectory $x(\cdot)$) is no longer a consequence of the decay of $\left(F(x_n)\right)_{n\in\N}$ (resp $F(x(\cdot))$) but a consequence of the bounds of $\left(\norm{x_n-x_{n-1}}\right)_{n\in\N}$ (resp $\norm{\dot x(\cdot)}$). The main contributions are the following :
\begin{enumerate}
    \item Strong convergence of FISTA iterates for functions having a local H\"olderian growth \eqref{eq:holder_intro} with parameter $\gamma>2$ (for a well-chosen inertial parameter). In addition, we prove that the error $F(x_n)-F^*$ decreases as $\mathcal{O}\left(n^{-\frac{2\gamma}{\gamma-2}}\right)$.
    \item Strong convergence of FISTA iterates for functions having a quadratic growth \eqref{eq:G2mu_intro} (for a well-chosen inertial parameter) and non-asymptotic bound on the error if this assumption is global. We recover the convergence rate proved if $F$ has a unique minimizer i.e.
    $$F(x_n)-F^*=\mathcal{O}\left(n^{-\frac{2\alpha}{3}}\right),$$
    for $\alpha$ sufficiently large.
    \item Finite trajectory of the solution of \eqref{eq:avd_intro} under H\"olderian or quadratic growth without a uniqueness assumption on the minimizers of $F$. We show that if the set of minimizers $X^*$ is sufficiently regular, the error along the trajectories decreases respectively as $\mathcal{O}\left(t^{-\frac{2\gamma}{\gamma-2}}\right)$ or $\mathcal{O}\left(t^{-\frac{2\alpha}{3}}\right)$ for the aforementioned assumptions if $\alpha$ is sufficiently large.
\end{enumerate}

The paper is organized as follows. Section \ref{sec:pre_soa} presents key geometry concepts used in the paper before giving an overview of the literature on FISTA and the Asymptotic Vanishing Damping system. The main results on the strong convergence of FISTA iterates are then stated and discussed in Section \ref{sec:Holderian}. Section \ref{sec:AVD} contains the analogous convergence results obtained for the trajectories of the Asymptotic Vanishing Damping system. The proofs of the main theorems are given in Section \ref{sec:proofs_main} while the other demonstrations are postponed to Appendix \ref{app:1} and Appendix \ref{app:2}.

\section{Preliminaries and State of the Art}\label{sec:pre_soa}
Let $\mathcal{H}$ be a Hilbert space. This work focuses on the class $\mathcal{C}$ of composite functions defined by:
\begin{definition}\label{def:Composite}
Let $\mathcal{C}$ be the class of convex functions $F$ defined from $\mathcal{H}$ to $\R\cup \{+\infty\}$ such that
$F=f+h$, where $f$ is a convex differentiable function having a $L$-Lipschitz gradient, and $h$ is a convex function whose proximal operator is known. The set of minimizers $X^*$ of $F$ is non-empty but not necessarily reduced to one point.    
\end{definition}
This set $\mathcal{C}$ depends on the non negative real number $L$, but to lighten the notation and because there is no ambiguity, we choose the simple notation $\mathcal{C}$. 
\subsection{Geometry of convex functions}\label{sec:geo}


In this paper we consider the general class of convex composite functions satisfying some growth condition in the neighborhood of their sets of minimizers:
\begin{definition}[\textit{Local growth conditions}]
Let $F:\mathcal{H}\rightarrow \R\cup \{+\infty\}$ be a proper lower semicontinuous convex function with a non-empty set of minimizers $X^*$. Let $F^*=\min_{x\in\mathcal{H}} F(x)$. The function $F$ is said to satisfy a Hölderian growth condition $\mathcal{G}^\gamma_\text{loc}$ for some $\gamma>2$ if  there exist $K>0$ and $\varepsilon>0$ such that for all $x\in \mathcal H$ satisfying $d(x,X^*) \leqslant\varepsilon$, we have:
\begin{equation}
K d(x,X^*)^\gamma\leqslant F(x)-F^*.
\label{eq:Lojasiewicz_dis_flat}
\end{equation}
Moreover, the function $F$ satisfies a local quadratic growth condition $\mathcal{G}_{\mu,\text{loc}}^2$ for some $\mu>0$ if there exists $\varepsilon>0$ such that for all $x\in \mathcal H$ satisfying: $d(x,X^*) \leqslant\varepsilon$, we have:
\begin{equation}
\frac{\mu}{2}d(x,X^*)^2\leqslant F(x)-F^*.
\label{eq:Lojasiewicz_dis_local}
\end{equation}
\end{definition}
In the context of finite-time analysis, we also introduce the global version of these growth conditions:
\begin{definition}[\textit{Global growth conditions}]
Let $F:\mathcal{H}\rightarrow \R\cup \{+\infty\}$ be a proper lower semicontinuous convex function with a non-empty set of minimizers $X^*$. Let $F^*=\min_{x\in\mathcal{H}} F(x)$. The function $F$ satisfies the growth condition $\mathcal{G}^\gamma$ for some $\gamma>2$ if there exists $K>0$ such that:
\begin{equation}
\forall x\in\mathcal{H},\quad K d(x,X^*)^\gamma\leqslant F(x)-F^*.
\label{eq:Lojasiewicz_dis_flat_global}
\end{equation}
Moreover, the function $F$ satisfies a quadratic growth condition $\mathcal{G}_\mu^2$ for some $\mu>0$ if:
\begin{equation}
\forall x\in\mathcal{H},\quad \frac{\mu}{2}d(x,X^*)^2\leqslant F(x)-F^*.
\label{eq:Lojasiewicz_dis}
\end{equation}
\end{definition}
The growth conditions $\mathcal{G}^\gamma_\text{loc}$ ($\gamma\geqslant 2$) can be seen as sharpness assumptions on the function $F$ characterizing functions behaving at least as $\|\cdot\|^\gamma$ in the neighborhood of their minimizers. In the convex setting, the class of functions satisfying some growth condition is a subclass of the functions having a \L ojasiewicz property \cite{loja63,loja93}, a key tool for the mathematical analysis of continuous and discrete dynamical systems. Initially introduced to prove the convergence of the trajectories for the gradient flow of analytic functions, an extension to nonsmooth functions has been proposed by Bolte et al. in \cite{Bolte2006,Bolte2007loja}:
\begin{definition}[The \L ojasiewicz property]
\label{def_loja}
Let $F:\mathcal{H}\rightarrow \R\cup \{+\infty\}$ be a proper lower semicontinuous convex function with a non-empty set of minimizers $X^*$. Let $F^*=\min_{x\in\mathcal{H}} F(x)$.
The function $F$ has a \L ojasiewicz property if for any minimizer $x^*$, there exist $\theta\in [0,1)$, $c>0$, $\varepsilon >0$ such that:
\begin{equation}
\forall x\in B(x^*,\varepsilon),~c\left(F(x)-F^*\right)^\theta \leqslant d(0,\partial F(x)).\label{Loja}
\end{equation}
\end{definition}

Let us finally introduce the notion of flatness characterizing differentiable functions that are at least as flat as $\|\cdot\|^\gamma$ with $\gamma>1$:
\begin{equation}\label{eq:flatness}\tag{$\mathcal{F}_\gamma$}
    \forall x^*\in X^*,\quad\forall x\in\mathcal{H},\quad F(x)-F^*\leqslant\frac{1}{\gamma}\left\langle\nabla F(x),x-x^*\right\rangle,
\end{equation}
where $F^*=\min_{x\in\mathcal{H}} F(x)$. Note that if $F$ is convex, then it satisfies \eqref{eq:flatness} for $\gamma=1$. This notion is recalled here to enable latter comparisons, particularly with the convergence results presented in \cite{apidopoulos2021convergence}.

To conclude this section, observe that in the context of local growth assumptions, the convergence of the sequence of $(F(x_n)-F^*)_{n\in \N}$ to $0$ does not trivially imply the convergence of a given sequence of iterates $(x_n)_{n\in \N}$ to the set of minimizers $X^*$. The coercivity of $F$ is needed to conclude:
\begin{lemme}\label{lem_cv_locale}
Let $F\in\mathcal{C}$ be a coercive function satisfying a local growth condition $\mathcal G_{loc}^\gamma$ for some real parameters $\gamma\geqslant 2$ and $K>0$. Let $(x_n)_{n\in\N}$ be a sequence of iterates generated by a given algorithm $\mathcal A$.

If the sequence $(F(x_n)-F^*)_{n\in \N}$ converge to $0$, then $\left(d(x_n,X^*)\right)_{n\in \mathbb N}$ converges to $0$ and:
$$\exists N\in \mathbb N,~\forall n\geqslant N,~F(x_n)-F^*\geqslant Kd(x_n,X^*)^\gamma.$$
\end{lemme}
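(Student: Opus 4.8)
The plan is to reduce everything to one quantitative observation: for every $\eta>0$, the quantity $m(\eta):=\inf\{F(x)-F^*\ :\ d(x,X^*)\geqslant\eta\}$ is strictly positive. Granting this, the lemma follows at once. Given $\eta>0$, since $F(x_n)-F^*\to 0$ there is $N$ with $F(x_n)-F^*<m(\eta)$ for all $n\geqslant N$, which forces $d(x_n,X^*)<\eta$; hence $d(x_n,X^*)\to 0$. In particular $d(x_n,X^*)\leqslant\varepsilon$ for $n$ large enough, and then the growth condition $\mathcal G_{loc}^\gamma$ applied at $x_n$ yields exactly $F(x_n)-F^*\geqslant K\, d(x_n,X^*)^\gamma$. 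Note that no property of the algorithm $\mathcal A$ is used beyond $F(x_n)-F^*\to 0$.

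To establish $m(\eta)>0$, I would argue by contradiction: suppose there is a sequence $(y_k)_{k\in\N}$ with $d(y_k,X^*)\geqslant\eta$ and $F(y_k)-F^*\to 0$. By coercivity of $F$, the sublevel set $\{x\in\mathcal H : F(x)\leqslant F^*+1\}$ is bounded, so $(y_k)$ is bounded; fixing any $\bar x\in X^*$ and using $d(y_k,X^*)\leqslant\norm{y_k-\bar x}$, the sequence $\left(d(y_k,X^*)\right)_k$ is bounded as well. Since $F$ is convex and l.s.c., $X^*=\{x\in\mathcal H: F(x)\leqslant F^*\}$ is a nonempty closed convex set, so the metric projection $p_k:=P_{X^*}(y_k)$ is well defined, satisfies $F(p_k)=F^*$, and (by the standard variational characterization of projections onto closed convex sets) every point $z$ of the segment $[p_k,y_k]$ still projects onto $p_k$, i.e. $d(z,X^*)=\norm{z-p_k}$.

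The key step is then to slide from $y_k$ towards $X^*$ along this segment until reaching the radius $\varepsilon$ on which the growth condition is active, all the while controlling the function value by convexity. Set $\varepsilon':=\min(\varepsilon,\eta)$; since $d(y_k,X^*)\geqslant\eta\geqslant\varepsilon'$, the parameter $s_k:=\varepsilon'/d(y_k,X^*)$ lies in $(0,1]$. Define $z_k:=p_k+s_k(y_k-p_k)$, so that $d(z_k,X^*)=s_k\,d(y_k,X^*)=\varepsilon'\leqslant\varepsilon$. Convexity of $F$ together with $F(p_k)=F^*$ gives $F(z_k)-F^*\leqslant s_k(F(y_k)-F^*)\leqslant F(y_k)-F^*\to 0$, whereas $\mathcal G_{loc}^\gamma$ evaluated at $z_k$ yields $F(z_k)-F^*\geqslant K\, d(z_k,X^*)^\gamma=K(\varepsilon')^\gamma>0$, a contradiction. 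This proves $m(\eta)>0$, and hence the lemma.

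The only genuinely delicate point is that in an infinite-dimensional Hilbert space one cannot simply extract a strongly convergent subsequence of $(y_k)$ and pass to the limit; the role of convexity combined with the growth condition is precisely to transport the smallness of the function values down to a \emph{fixed} distance $\varepsilon'$ from $X^*$, where the lower bound in $\mathcal G_{loc}^\gamma$ finally bites, thereby avoiding any compactness argument. The remaining ingredients (boundedness of sublevel sets under coercivity, and the elementary facts about projections onto closed convex sets) are routine.
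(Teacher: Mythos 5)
Your proof is correct, but it follows a genuinely different route from the paper. The paper argues by contradiction via compactness: it uses coercivity to bound the sequence, extracts a weakly convergent subsequence from the region $\{d(\cdot,X^*)\geqslant\varepsilon\}$, and invokes the weak lower semicontinuity of the convex l.s.c.\ function $F$ to force the weak limit to be a minimizer, contradicting its distance to $X^*$. You instead prove the purely quantitative statement $m(\eta)=\inf\{F(x)-F^*:\ d(x,X^*)\geqslant\eta\}>0$ by a radial convexity argument: project $y_k$ onto the closed convex set $X^*$, slide along the segment $[p_k,y_k]$ to the point $z_k$ at distance exactly $\varepsilon'=\min(\varepsilon,\eta)$ (using the standard fact that every point of that segment still projects onto $p_k$), and play the convexity estimate $F(z_k)-F^*\leqslant s_k\left(F(y_k)-F^*\right)$ against the uniform lower bound $K(\varepsilon')^\gamma$ from $\mathcal G_{loc}^\gamma$. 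All steps check out, and the final passage from $d(x_n,X^*)\to 0$ to the displayed inequality is the same trivial application of the growth condition as in the paper. What your approach buys: it needs no weak compactness or weak lower semicontinuity at all, and in fact it never uses the coercivity hypothesis (your boundedness remarks are correct but superfluous) — in particular you avoid the delicate point that $\{x:\ d(x,X^*)\geqslant\varepsilon\}$ is closed but not convex, hence not obviously weakly closed, so keeping the weak limit away from $X^*$ requires care in the paper's argument. What it costs: your argument uses the convexity of $F$ in an essential way (to interpolate along the segment), whereas the paper's compactness argument only exploits convexity through weak l.s.c.\ and would extend to merely l.s.c.\ coercive functions in finite dimension; since $F\in\mathcal C$ is convex here, nothing is lost in the present setting.
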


\begin{proof}
Assume that the sequence $\left(d(x_n,X^*)\right)_{n\in\N}$ does not converge to $0$. Thus, there exists $\varepsilon>0$ and a non-decreasing function $\phi:\N\rightarrow\N$ such that the sub-sequence $(x_{\phi(n)})_{n\in\N}$ satisfies:
\begin{equation*}
\forall n\in \N,~    d(x_{\phi(n)},X^*)\geqslant \varepsilon.
\end{equation*}
Since the sequence $\left(F(x_n)-F^*\right)_{n\in\N}$ is assumed to converge to $0$ , it is also bounded. Combined with the coercivity of $F$, this implies that the sequence $(x_n)_{n\in\N}$ is bounded too. Therefore, there exists a closed bounded set $C$ containing $X^*$ such that 
\begin{equation}
    \left\{x_n,~n\in\N\right\}\subset C.
\end{equation}
Let $K_\varepsilon=C\cap\left\{x\in\mathcal{H},~d(x,X^*)\geqslant\varepsilon\right\}$. By construction, $K_\varepsilon$ is a weakly compact subset of $\mathcal{H}$ and $K_\varepsilon\cap X^*=\emptyset$. Moreover, for all $n\in\N$, we have $x_{\phi(n)}\in K_\varepsilon$ so there exists a weakly  convergent sub-sequence $\left(x_{\psi\circ\phi(n)}\right)_{n\in\N}$ whose weak limit denoted by $\tilde x$ belongs to $K_\varepsilon$ and thus $\tilde x\notin X^*$.

Consequently, since $F$ is convex and lower semi-continuous (we remind the reader that when $F$ is convex, then $F$ is weak lsc if and only if $F$ is strong lsc, see e.g. \cite{Brezis}),we have
\begin{equation}
  \liminf{  F\left(x_{\psi\circ\phi(n)}\right)} - F^*\geqslant F(\tilde x) -F^*.
\end{equation}
Since the whole sequence $\left(F(x_n)-F^*\right)_{n\in\N}$ tends to $0$ when $n\rightarrow +\infty$, and since $F(\tilde x)  \geqslant F^*$, it implies that $F(\tilde x)-F^*=0$ which is impossible since $\tilde x\notin X^*$. Thus, the sequence $\left(d(x_n,X^*)\right)_{n\in\N}$ converges to $0$ as $n\rightarrow+\infty$.
\end{proof}

This technical lemma will be useful throughout the paper to establish new convergence rates for the class of composite functions satisfying certain growth conditions, without assuming the uniqueness of the minimizer.

\subsection{FISTA and its variants}

To solve the minimization problem
\begin{equation}
    \min_{x\in\mathcal{H}}F(x),
\end{equation}
where $F$ is a convex composite function in the class $\mathcal C$ (see Definition \ref{def:Composite})
, a classical algorithm is the \textbf{Proximal Gradient method} also called \textbf{Forward-Backward} \cite{combettes2005signal}. Before defining properly this scheme, it is necessary to introduce the notion of proximal operator. Considering $h:\mathcal{H}\rightarrow\R\cup\{+\infty\}$ a proper lower semicontinuous convex function, its proximal operator denoted $\prox_h$ is defined for all $x\in\mathcal{H}$ as
\begin{equation}\label{eq:prox}
    \prox_h(x)=\arg\min\limits_{y\in\mathcal{H}}h(y)+\frac{1}{2}\|x-y\|^2.
\end{equation}
Given an initialization $x_0\in\mathcal{H}$, the iterates of the Proximal Gradient method are defined as
\begin{equation}\label{eq:FB}
    \forall n\in\N,\quad x_{n+1} = \prox_{sh}\left(x_n-s\nabla f(x_n)\right),
\end{equation}
where the step size $s>0$ should be chosen smaller than $\frac{1}{L}$ to ensure that $F(x_n)-F^*=\mathcal{O}\left(n^{-1}\right)$.\\
In 2009 Beck and Teboulle introduce in \cite{beck2009fast} the Fast Iterative Shrinkage-Thresholding Algorithm (FISTA) for the same class of functions. While the Proximal Gradient method is the composite extension of the Gradient Descent method in the differentiable setting, FISTA is a generalization of Nesterov's accelerated gradient method for convex functions \cite{nesterov1983method}. Indeed, the iterates of FISTA are defined in the following way:
\begin{equation}\label{eq:prox_grad_accel}
    x_0\in\mathcal{H},\quad\forall n\in\N,\quad\left\{\begin{aligned}
        &y_n=x_n+\alpha_n\left(x_n-x_{n-1}\right)\\
        &x_{n+1}=\prox_{sh}\left(y_n-s\nabla f(y_n)\right),
    \end{aligned}\right.
\end{equation}
where $x_{-1}=x_0$ and the sequence $\left(\alpha_n\right)_{n\in\N}$ is that defined by Nesterov in \cite{nesterov1983method} as:
\begin{equation}
\label{eq:choix_Nesterov_alpha}
\alpha_0=0,\quad\forall n\in\N,\left\{\begin{aligned}
    t_{n+1}&=\frac{1+\sqrt{1+4t_n^2}}{2}\\
    \alpha_{n+1} &= \frac{t_n-1}{t_{n+1}},
\end{aligned}\right.
\end{equation}
where $t_0=1$. The authors prove that $F(x_n)-F^*=\mathcal{O}\left(n^{-2}\right)$ for $s\in\left(0,\frac{1}{L}\right)$ (and in particular \eqref{eq:rate_FISTA} for $s=\frac{1}{L}$). Although this convergence rate reveals a significant improvement over Proximal Gradient method, the authors do not show the weak convergence of the iterates.

This property of the sequence $\left(x_n\right)_{n\in\N}$ is proved by Chambolle and Dossal in \cite{chambolle2015convergence} for a slightly different version of FISTA, choosing $\left(\alpha_n\right)_{n\in\N}$ as $\alpha_n=\frac{n}{n+\alpha}$ with $\alpha>3$. Attouch and Peypouquet show in \cite{attouch2016rate} that this choice for $\alpha$ ensures that $F(x_n)-F^*=o\left(n^{-2}\right)$. 

\begin{remarque}
    The sequence $\left(\alpha_n\right)_{n\in\N}$ introduced by Chambolle and Dossal (defined as $\alpha_n=\frac{n}{n+\alpha}$ with $\alpha>3$) and that given by Nesterov (i.e. \eqref{eq:choix_Nesterov_alpha}) have a similar behavior when $\alpha=3$. In practice, the Chambolle-Dossal formulation is more convenient to draw a parallel with the continuous setting (see Section \ref{sec:AVD}) and to obtain improved convergence properties under additional geometry assumptions, while the Nesterov formulation facilitates the implementation of linesearch strategies.
\end{remarque}

Note that Kim and Fessler introduce Optimized Gradient Method in \cite{Kim2016} (and a proximal version in \cite{kim2018adaptive}) which also ensures a decrease of the error of order $\mathcal{O}\left(n^{-2}\right)$ but with a tightest and optimal bound in the differentiable case. 

\begin{remarque}[Why so many names ?]
    When introduced by Beck and Teboulle, FISTA is presented as an accelerated version of Iterative Shrinkage-Thresholding algorithms \cite{daubechies2004iterative} (ISTA) which are methods solving problems of the form:
    \begin{equation*}
        \min_{x\in\mathcal{H}}F(x):=\|Ax-b\|^2+\lambda\|x\|_1.
    \end{equation*}
    The appellation ISTA (and consequently FISTA) comes from the fact that the proximal operator of $\|\cdot\|_1$ is the soft-thresholding operator. However, the function $F$ defined in that way only belongs to a subclass of composite convex functions that FISTA can actually minimize.

    This confusion may explain the numerous names given to FISTA such as Nesterov's Accelerated Forward-Backward \cite{attouch2016rate}, Accelerated Proximal Gradient Descent \cite{li2015accelerated} or Inertial Forward-Backward \cite{attouch2016rate}. It also occurs that FISTA refers to \eqref{eq:prox_grad_accel} where the sequence $\left(\alpha_n\right)_{n\in\N}$ is set constant in time equal to $\alpha\in(0,1)$ (see \cite{li2022linear}), a method also called V-FISTA by Beck in \cite{beck2017first}.
\end{remarque}

\subsection{Convergence under additional geometry assumptions}\label{Sec:ConvGeo}
In this section, we give an overview of the known convergence properties of the Proximal Gradient Methods and of the Chambolle-Dossal formulation of FISTA for convex composite functions satisfying an additional growth assumption. 
\paragraph{Proximal Gradient Method under geometry assumptions}

The convergence of Proximal Gradient Method has been studied under several growth conditions in particular by Garrigos et al. in \cite{garrigos2022convergence}. In this paper, the authors prove that the iterates of the Proximal Gradient Method converge strongly to a minimizer of $F$ if the function is $p$-\L{}ojasiewicz with $p\geqslant1$ without any uniqueness assumption on the set of minimizers. The \L{}ojasiewicz property can be linked to the growth assumptions stated in Section \ref{sec:geo} and the strong convergence result holds if $F$ satisfies $\mathcal{G}^2_\mu$ or $\mathcal{G}^\gamma$. Moreover, if $F$ satisfies $\mathcal{G}^2_\mu$, then 
\begin{equation*}
F(x_n)-F^*=\mathcal{O}\left(e^{-\frac{\mu}{4L}n}\right)
\end{equation*}
and if $F$ has an H\"olderian growth i.e. $\mathcal{G}^\gamma$ with $\gamma>2$ then
\begin{equation*}
F(x_n)-F^*=\mathcal{O}\left(n^{-\frac{\gamma}{\gamma-2}}\right).
\end{equation*}

\paragraph{FISTA under geometry assumptions}

As stated previously, it is well known (see \cite{chambolle2015convergence,su2014differential}) that in a convex setting the iterates of the Chambolle-Dossal formulation of FISTA i.e. $\alpha_n=\frac{n}{n+\alpha}$, satisfy:
\begin{equation}\label{eq:cv_FISTA_cv}
F(x_n)-F^*\leqslant \frac{(\alpha-1)^2\|x_0-x^*\|^2}{2s(n+\alpha-2)^2},
\end{equation}
for any $x^*\in X^*$ as long as $s\leqslant \frac{1}{L}$ and $\alpha\geqslant3$. The following works show that additional assumptions on $F$ allow to guarantee better convergence properties. The paragraph is summarized in Table \ref{tab:fista_nu}.

First, Su, Boyd and Candès show in \cite{su2014differential} that this rate can be improved to $\mathcal{O}\left(n^{-3}\right)$ for strongly convex functions if $\alpha\geqslant \frac{9}{2}$. Attouch and Cabot strengthen this result in \cite{attouch2018convergence} by proving that $F(x_n)-F^*=\mathcal{O}\left(n^{-\frac{2\alpha}{3}}\right)$ for $\alpha>0$ when $F$ has a strong minimizer, i.e. $F$ has a quadratic growth $\mathcal{G}^2_\mu$ and a unique minimizer. 
The understanding of FISTA in this setting is then enhanced by Aujol et al. in \cite{aujol2023fista} as the authors provide non-asymptotical results enlightening the dependency in $\alpha$. Apidopoulos et al. also give improved guarantees for functions having a H\"olderian and quadratic growth in \cite{apidopoulos2021convergence}. 

{\renewcommand{\arraystretch}{1.5}
\begin{table}[H]
\centering
 \begin{tabularx}{\linewidth}{|>{\centering}X|>{\centering}X|>{\centering}X|>{\centering}X|}
  \hline
  \textbf{Reference} & \textbf{Assumption on $F$} & \textbf{Parameter range} & \textbf{Convergence rate of $F(x_n)-F^*$} \tabularnewline
  \hline
  Su et al. \cite{su2014differential} & Strong convexity & $\alpha\geqslant \frac{9}{2}$ & $\mathcal{O}\left(n^{-3}\right)$\tabularnewline\hline
  Attouch, Cabot \cite{attouch2018convergence} & $\mathcal{G}^2_\mu$ and uniqueness of the minimizer &$\alpha>0$ & $\mathcal{O}\left(n^{-\frac{2\alpha}{3}}\right)$\tabularnewline\hline
  Apidopoulos et al. \cite{apidopoulos2021convergence}\\Aujol et al. \cite{aujol2023fista} & $\mathcal{F}_\gamma$ and $\mathcal{G}^2_\mu$, $\gamma\geqslant1$\\Uniqueness of the minimizer & $\alpha\geqslant 1+\frac{2}{\gamma}$ & $\mathcal{O}\left(n^{-\frac{2\alpha\gamma}{\gamma+2}}\right)$\tabularnewline
  \hline
  Apidopoulos et al. \cite{apidopoulos2021convergence}& $\mathcal{F}_{\gamma_1}$ and $\mathcal{G}^{\gamma_2}$ where $\gamma_2\geqslant\gamma_1>2$\\Uniqueness of the minimizer & $\alpha\geqslant\frac{\gamma_1+2}{\gamma_1-2}$&$\mathcal{O}\left(n^{-\frac{2\gamma_2}{\gamma_2-2}}\right)$\tabularnewline
  \hline
\end{tabularx}
\caption{Convergence rate of $F(x_n)-F^*$ for FISTA under geometry assumptions on $F$.}\label{tab:fista_nu}
\end{table}}

The convergence results stated above give strong guarantees but they all rely on the hypothesis that $F$ has a unique minimizer. Similarly, this assumption appears in \cite{Tao2016local} when proving the linear convergence of FISTA iterates for a LASSO problem. We can observe that in each aforementioned case, this condition allows to prove trivially the strong convergence of FISTA iterates towards the unique minimizer $x^*$ of $F$: we know that $\lim\limits_{n\rightarrow+\infty}F(x_n)-F^*=0$ and $K\|x_n-x^*\|^\gamma\leqslant F(x_n)-F^*$ for some $\gamma\geqslant2$ due to the considered growth assumption. Hence, $\|x_n-x^*\|\rightarrow0$ when $n\rightarrow+\infty$.

\subsection{The Asymptotic Vanishing Damping (AVD) system}
In the seminal work by Su et al. \cite{su2014differential}, the authors demonstrate that the Fast Iterative Shrinkage-Thresholding Algorithm (FISTA), within a differentiable framework, can be interpreted as the discretization of the following ordinary differential equation (ODE) called Asymptotic Vanishing Damping (AVD) system \cite{nesterov1983method,su2014differential}

\begin{equation}\label{eq:Nesterov_ODE}\tag{AVD}
\ddot{x}(t) + \frac{\alpha}{t} \dot{x}(t) + \nabla F(x(t)) = 0,
\end{equation}
where $\alpha = 3$.

The connection between inertial algorithms and ODEs dates back to the pioneering work of Polyak \cite{polyak1964some} on Heavy Ball schemes. In Polyak's observations, the following equation describes the evolution of a particle subject to a force field described by $\nabla F$ and a potentially time-dependent friction term $\alpha(t)$:

\begin{equation}\label{ODEHB}
\ddot{x}(t) + \alpha(t) \dot{x}(t) + \nabla F(x(t)) = 0.
\end{equation}

If $F$ is $\mu$-strongly convex, Polyak demonstrates that the optimal friction is constant, depending on $\mu$, ensuring an exponential decay of $F(x(t)) - F^*$.

Attouch et al. \cite{attouch2017asymptotic,attouch2018fast} provide a comprehensive study of the solution to the ODE \eqref{ODEHB} based on the properties of $F$ and the friction $\alpha(t)$, in particular analyzing the ODE \eqref{eq:Nesterov_ODE}. In both papers, the authors provide convergence rates for $F(x(t)) - F^*$ in the strongly convex case and for functions growing quadratically with a unique minimizer. Specifically, they show that:

\begin{equation}
F(x(t)) - F^* = O\left(t^{-\frac{2\alpha}{3}}\right).
\end{equation}

Aujol et al. \cite{aujol2019optimal} demonstrate that these convergence rates can be improved by introducing an assumption of flatness, also known as quasar convexity. Under weaker growth conditions and quasar convexity, Aujol et al. \cite{aujol2019optimal} and later Luo et al. \cite{Luo2023} provide new convergence rates for the solution of \eqref{eq:Nesterov_ODE}.

All these results assume that the function $F$ to be minimized admits a unique minimizer. These findings are summarized in Table \ref{tab:avd_nu}.

Weak convergence of $x(\cdot)$ towards a minimizer of $F$ has been demonstrated by Attouch et al. \cite{attouch2018fast} by adapting the convergence strategy proposed for the iterates of FISTA by Chambolle et al. \cite{chambolle2015convergence}. Under the assumption of convexity of $F$, strong convergence is straightforward if $F$ is strongly convex or if $F$ grows quadratically with a unique minimizer, but less clear without these assumptions. In their work, the authors propose several sets of assumptions, such as the parity of $F$ or the non-emptiness of the interior of the set of minimizers of $F$, to ensure strong convergence of $x(\cdot)$ towards a minimizer $x^*$ of $F$.

In Section \ref{sec:AVD}, we present new results on convergence rates under growth assumptions without assuming uniqueness of the minimizer. The strong convergence of the trajectory towards a minimizer of $F$ is also proved by showing its finite length.

\off{
Su et al.

Results under geometry assumptions

We consider the ODE associated to Nesterov's accelerated gradient scheme which we call Asymptotic Vanishing Damping (AVD) system \cite{nesterov1983method,su2014differential}:
\begin{equation}
\forall t\geqslant t_0,\quad\ddot{x}(t)+\frac{\alpha}{t}\dot{x}(t)+\nabla F(x(t))=0,
\label{eq:Nesterov_ODE}
\tag{AVD}
\end{equation}
where $t_0>0$ and $\alpha>0$.

The convergence of the trajectories of this ODE was analysed in plenty of works. We focus here on functions satisfying some geometry assumption which was done in \cite{aujol2019optimal,sebbouh2020convergence,aujol2023fista}. The following theorems provide an extension of the convergence results obtained in the aforementioned works. These results require an assumption on the geometry of the set of minimizers $X^*$.
}

{\renewcommand{\arraystretch}{1.5}
\begin{table}[H]
\centering
 \begin{tabularx}{\linewidth}{|>{\centering}X|>{\centering}X|>{\centering}X|>{\centering}X|}
  \hline
  \textbf{Reference} & \textbf{Assumption on $F$} & \textbf{Parameter range} & \textbf{Convergence rate of $F(x(t))-F^*$} \tabularnewline
  \hline
  Su et al. \cite{su2014differential}& $\mathcal{S}_\mu$ & $\alpha\geqslant \frac{9}{2}$ & $\mathcal{O}\left(t^{-3}\right) $\tabularnewline
  \hline
  Attouch et al. \cite{attouch2018fast} & $\mathcal{S}_\mu$ & $\alpha>3$ & $\mathcal{O}\left(t^{-\frac{2\alpha}{3}}\right) $
\tabularnewline
  \hline
  Aujol et al. \cite{aujol2019optimal,aujol2023fista}  & $\mathcal{F}_\gamma$ and $\mathcal{G}^2_\mu$\\Uniqueness of the minimizer & $\alpha>1+\frac{2}{\gamma}$ & $\mathcal{O}\left(t^{-\frac{2\alpha\gamma}{\gamma+2}}\right)$ \tabularnewline
  \hline
  Aujol et al. \cite{aujol2019optimal} & $\mathcal{F}_{\gamma_1}$ and $\mathcal{G}^{\gamma_2}$ where $ \gamma_2\geqslant\gamma_1>2$\\$F$ coercive & $\alpha\geqslant \frac{\gamma_1+2}{\gamma_1-2}$ & $\mathcal{O}\left(t^{-\frac{2\gamma_2}{\gamma_2-2}}\right)$ \tabularnewline\hline
  Luo, Xiao \cite{Luo2023} & $\mathcal{F}_{\gamma_1}$ and $\mathcal{G}^{\gamma_2}$ where $ \gamma_2\geqslant\gamma_1>2$\\Uniqueness of the minimizer & $\alpha\in\left(\frac{\gamma_1+2}{\gamma_1},\frac{\gamma_1+2}{\gamma_1}\cdot\frac{\gamma_2}{\gamma_2-2}\right)$ & $\mathcal{O}\left(t^{-\frac{2\alpha\gamma_1}{\gamma_1+2}}\right)$ \tabularnewline\hline
\end{tabularx}
\caption{Convergence rate of $F(x(t))-F^*$ where $x$ is solution of \eqref{eq:Nesterov_ODE} under geometry assumptions on $F$.}\label{tab:avd_nu}
\end{table}}

\section{Strong convergence of FISTA iterates}\label{sec:Holderian}

In this section, we establish the \textbf{strong convergence of FISTA iterates to a minimizer} of a composite function $F\in\mathcal{C}$ (see Definition \ref{def:Composite}) if this function has a H\"olderian or quadratic growth. Recall that iterates of FISTA are defined as:
\begin{equation}\label{eq:FISTA}
    x_0\in\mathcal{H},\quad\forall n\in\N,\left\{\begin{aligned}
        &y_n=x_n+\alpha_n\left(x_n-x_{n-1}\right)\\
        &x_{n+1}=\prox_{sh}\left(y_n-s\nabla f(y_n)\right),
    \end{aligned}\right.
\end{equation}
where $x_{-1}=x_0$ and we choose the Chambolle-Dossal definition of $\left(\alpha_n\right)_{n\in\N}$ i.e. $\alpha_n=\frac{n}{n+\alpha}$ with $\alpha>3$.

This property stated in Theorem \ref{thm:FISTA_flat}, Corollary \ref{cor:strong_cvg_flat} and Theorem \ref{thm:strong_cvg} relies on asymptotic controls of the sequence $\left(\|x_n-x_{n-1}\|\right)_{n\in\N}$ ensuring that the trajectory described by FISTA iterates has a finite length. Worst-case convergence rates for the error are given based on Lyapunov analyses and using the links between FISTA and \eqref{eq:avd_intro}. We also provide convergence guarantees in the continuous setting under similar assumptions in Section \ref{sec:AVD}.

\subsection{H\"olderian growth condition}

We first consider functions satisfying the local H\"olderian growth condition $\mathcal{G}_{loc}^\gamma$ for $\gamma>2$ and give convergence rates for FISTA iterates.
\begin{theoreme}
\label{thm:FISTA_flat}
Let $F\in \mathcal{C}$ be a coercive composite function
having a H\"olderian growth i.e. satisfying $\mathcal G^\gamma_\text{loc}$ for some $\gamma>2$. Then for $\alpha>5+\frac{8}{\gamma-2}$, the sequence $(x_n)_{n\in\N}$ provided by \eqref{eq:FISTA} with $s=\frac{1}{L}$ satisfies:
\begin{equation}\label{eq:rate_flat}
F(x_n)-F^*=\mathcal{O}\left(n^{-\frac{2\gamma}{\gamma-2}}\right),\quad \|x_n-x_{n-1}\|=\mathcal{O}\left(n^{-\frac{\gamma}{\gamma-2}}\right).
\end{equation}
Moreover the trajectory $(x_n)_{n}$ has a finite length and strongly converges to a minimizer $x^*$ of $F$. 
\end{theoreme}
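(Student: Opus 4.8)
The plan is to run a Lyapunov analysis for FISTA adapted to the Hölderian growth regime, following the general strategy of Attouch--Cabot / Aujol et al., but being careful to work with the distance $d(x_n,X^*)$ to the set of minimizers rather than with a fixed unique minimizer. First I would fix a minimizer $x^*$ (or, better, a projection onto $X^*$) and introduce an energy functional of the form
\begin{equation*}
\mathcal{E}_n = a_n\bigl(F(x_n)-F^*\bigr) + \tfrac{1}{2}\bigl\| b_n(x_n-x_{n-1}) + x_n - x^* \bigr\|^2 + c_n\,\| x_n - x^*\|^2,
\end{equation*}
with coefficients $a_n \sim n^p$, $b_n \sim n$ tuned so that, in the purely convex regime, $\mathcal{E}_n$ is non-increasing (this is where the condition $\alpha > 3$, and more precisely $\alpha > 5 + \frac{8}{\gamma-2}$, enters). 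The standard descent inequality for the proximal-gradient step applied at $y_n$ with $s = 1/L$ gives the one-step estimate controlling $F(x_{n+1})-F^*$ in terms of $F(x_n)-F^*$, $\|x_{n+1}-x_n\|^2$ and inner products with $x_n - x^*$; combining it with the algebraic identity linking $y_n$, $x_n$, $x_{n-1}$ yields a discrete differential inequality for $\mathcal{E}_n$.

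Next I would bring in the geometry. By Lemma \ref{lem_cv_locale}, since $F$ is coercive, $\mathcal{G}^\gamma_{loc}$ holds and $F(x_n)-F^*\to 0$ (which follows from the $\mathcal{O}(n^{-2})$ convex rate \eqref{eq:cv_FISTA_cv}), we get $d(x_n,X^*)\to 0$ and eventually $F(x_n)-F^*\geqslant K\,d(x_n,X^*)^\gamma$. This lets me convert a bound of the form $F(x_n)-F^* = \mathcal{O}(n^{-q})$ into $d(x_n,X^*) = \mathcal{O}(n^{-q/\gamma})$, and feed that back into the Lyapunov inequality to absorb the "bad" term $c_n\|x_n-x^*\|^2$ (or rather $c_n\,d(x_n,X^*)^2$, after replacing $x^*$ by the projection $p_{X^*}(x_n)$, using that $\|x_n - p_{X^*}(x_n)\|\leqslant \|x_n - x^*\|$ for any fixed $x^*$). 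A bootstrap / feedback argument — starting from the convex rate and iterating — should then upgrade the exponent $p$ up to the claimed value, giving $F(x_n)-F^* = \mathcal{O}(n^{-\frac{2\gamma}{\gamma-2}})$; the threshold $\alpha > 5+\frac{8}{\gamma-2}$ is exactly what is needed for the coefficients to remain admissible at that exponent.

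Once the rate on the function values is established, summability of $\|x_n-x_{n-1}\|$ comes from extracting, along the Lyapunov chain, an inequality of the form $\sum_n n\,\|x_n - x_{n-1}\|^2 \lesssim \mathcal{E}_{n_0} + \sum_n (\text{error terms})$, where the error terms are controlled by $\sum_n n^{\,\beta}\,d(x_n,X^*)^{2} < \infty$ thanks to the rate just obtained and the choice of $\alpha$. Then Cauchy--Schwarz, $\sum_n \|x_n-x_{n-1}\| \leqslant \bigl(\sum_n n\|x_n-x_{n-1}\|^2\bigr)^{1/2}\bigl(\sum_n n^{-1}\bigr)^{1/2}$, does not quite work (the harmonic series diverges), so instead I would use the sharper pointwise bound $\|x_n-x_{n-1}\| = \mathcal{O}(n^{-\frac{\gamma}{\gamma-2}})$, obtained by reading off the $\tfrac12\|b_n(x_n-x_{n-1})+\dots\|^2$ term of the (now bounded, in fact decaying) Lyapunov function, and note that $\frac{\gamma}{\gamma-2} > 1$ for $\gamma > 2$, so $\sum_n \|x_n-x_{n-1}\| < \infty$. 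Finite length implies $(x_n)$ is Cauchy, hence strongly convergent to some $\bar x$; since $F(x_n)-F^*\to 0$ and $F$ is l.s.c., $\bar x \in X^*$.

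The main obstacle I anticipate is the interplay between the Lyapunov coefficients and the set-valued geometry: one must carefully handle the fact that the "anchor" $p_{X^*}(x_n)$ changes with $n$, so the usual telescoping identities (which rely on a fixed $x^*$) must be redone, either by freezing $x^*$ and only invoking $d(x_n,X^*)\leqslant\|x_n-x^*\|$ at the right places, or by controlling $\|p_{X^*}(x_{n+1}) - p_{X^*}(x_n)\|$ via nonexpansiveness of the projection onto the convex set $X^*$. Getting the bootstrap to terminate exactly at the exponent $\frac{2\gamma}{\gamma-2}$ with the stated lower bound on $\alpha$, rather than losing a constant at each step, is the delicate bookkeeping part; I would expect this to require a single well-chosen Lyapunov functional with $n$-dependent coefficients (rather than literal iteration) whose admissibility inequality reduces to a polynomial inequality in $n$ that holds for $n$ large precisely when $\alpha > 5 + \frac{8}{\gamma-2}$.
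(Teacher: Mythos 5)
Your plan is essentially the paper's proof: a Lyapunov energy with $n$-dependent weights anchored at the moving projection $x_n^*=P_{X^*}(x_n)$ and containing a negative-coefficient distance term, combined with Lemma \ref{lem_cv_locale} and the local growth condition to absorb that term, a single weighted functional $\mathcal{J}_n=n^p\mathcal{E}_n$ with $p=1+\frac{4}{\gamma-2}$ whose increments are merely shown to be bounded (so no bootstrap or monotonicity is needed, only $\mathcal{J}_n\lesssim n$), and finally the pointwise bound $\|x_n-x_{n-1}\|=\mathcal{O}\left(n^{-\frac{\gamma}{\gamma-2}}\right)$ read off the energy after lower-bounding the squared-norm term by $\frac{n^2\alpha_n^2}{2}\|x_n-x_{n-1}\|^2-\lambda^2 d(x_n,X^*)^2$, which gives summability, the Cauchy property and strong convergence to a minimizer. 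The one branch of your plan that would not survive is the ``freeze $x^*$'' option: with a fixed anchor the positive-coefficient term $\|x_{n+1}-x^*\|^2$ arising in the Lyapunov inequality cannot be dominated by $\left(F(x_{n+1})-F^*\right)^{2/\gamma}$ without uniqueness of the minimizer, so the moving projection is unavoidable and the resulting cross terms must be controlled through the obtuse-angle property $\langle x-P_{X^*}(x),y-P_{X^*}(x)\rangle\leqslant 0$, exactly as in the paper's Lemmas \ref{lem:tech1} and \ref{lem:ineq_flat1}.
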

The proof of Theorem~\ref{thm:FISTA_flat} is detailed in Section \ref{sec:proof_fistaflat}. Note that this theorem can be seen as a discrete version of Theorem \ref{thm:AVD_flat} giving properties of the solution of the ODE associated to Nesterov and presented in Section~\ref{sec:AVD}. 

Several comments can be made about Theorem~\ref{thm:FISTA_flat}. First note that the strong convergence of the sequence $(x_n)_{n\in\N}$ is a consequence of the summability of $(\|x_n-x_{n-1}\|)_{n\in\N}$. Also observe that the convergence rate \eqref{eq:rate_flat} is faster than the one achieved by the Proximal Gradient descend, see Section \ref{Sec:ConvGeo} for more details. Hence, FISTA provides an improvement for the class of convex functions satisfying a local H\"olderian growth condition. Similar bounds have been established by Apidopoulos et al. \cite{apidopoulos2021convergence} but the assumptions of Theorem \ref{thm:FISTA_flat} are weaker: no flatness hypothesis and no uniqueness of the minimizer are required. 


Lastly, the conclusions of Theorem \ref{thm:FISTA_flat} hold if the composite function $F$ satisfies $\mathcal G^\gamma_\text{loc}$ for $\gamma>2$ and for $\alpha>5+\frac{8}{\gamma-2}$. Remarking that $F$ satisfies $\mathcal G^{\gamma'}_\text{loc}$ for any $\gamma'\geqslant \gamma$ 
and that Theorem~\ref{thm:FISTA_flat} thus holds for any $\gamma'>\max (\gamma, \frac{8}{\alpha-5}+2)$, we deduce the following Corollary :

\begin{corollaire}\label{cor:strong_cvg_flat}
    Let $F\in\mathcal{C}$ be a coercive composite function having a H\"olderian growth i.e. satisfying $\mathcal G^\gamma_\text{loc}$ for $\gamma>2$. Then, for any $\alpha>5$, {the sequence $(x_n)_{n\in\N}$ provided by \eqref{eq:FISTA} converges strongly to a minimizer of $F$}.
\end{corollaire}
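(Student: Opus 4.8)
The plan is to derive Corollary~\ref{cor:strong_cvg_flat} from Theorem~\ref{thm:FISTA_flat} by exploiting the monotonicity of the H\"olderian growth condition in the exponent $\gamma$. The key observation is that if $F$ satisfies $\mathcal{G}^\gamma_{\text{loc}}$ for some $\gamma>2$, then it also satisfies $\mathcal{G}^{\gamma'}_{\text{loc}}$ for every $\gamma'\geqslant\gamma$. Indeed, let $K,\varepsilon>0$ be the constants associated with $\mathcal{G}^\gamma_{\text{loc}}$. Without loss of generality one may take $\varepsilon\leqslant 1$ (shrinking $\varepsilon$ only weakens the hypothesis); then for $x$ with $d(x,X^*)\leqslant\varepsilon\leqslant 1$ and $\gamma'\geqslant\gamma$ we have $d(x,X^*)^{\gamma'}\leqslant d(x,X^*)^\gamma$, so $K\,d(x,X^*)^{\gamma'}\leqslant K\,d(x,X^*)^\gamma\leqslant F(x)-F^*$, which is exactly $\mathcal{G}^{\gamma'}_{\text{loc}}$ with the same constant $K$ and radius $\varepsilon$.

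Next I would fix $\alpha>5$ and choose $\gamma'$ so that Theorem~\ref{thm:FISTA_flat} applies. The theorem requires $\alpha>5+\frac{8}{\gamma'-2}$, equivalently $\gamma'>2+\frac{8}{\alpha-5}$; since $\alpha>5$ the quantity $\frac{8}{\alpha-5}$ is a finite positive number, so such $\gamma'$ exist. Concretely, pick any $\gamma'>\max\!\left(\gamma,\ 2+\frac{8}{\alpha-5}\right)$. By the first paragraph $F$ satisfies $\mathcal{G}^{\gamma'}_{\text{loc}}$, and by construction $\alpha>5+\frac{8}{\gamma'-2}$, so all hypotheses of Theorem~\ref{thm:FISTA_flat} are met with exponent $\gamma'$. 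Its conclusion gives in particular that the sequence $(x_n)_{n\in\N}$ generated by \eqref{eq:FISTA} with $s=\frac{1}{L}$ has finite length and converges strongly to a minimizer of $F$, which is precisely the assertion of the corollary.

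I do not anticipate a genuine obstacle here: the corollary is a soft consequence of Theorem~\ref{thm:FISTA_flat}, and the only point requiring a word of care is the reduction in the first paragraph, namely ensuring that $\varepsilon$ can be assumed $\leqslant 1$ so that raising $d(x,X^*)$ to a larger power decreases it on the relevant neighborhood. (If one preferred not to shrink $\varepsilon$, one could instead absorb a factor $\varepsilon^{\gamma-\gamma'}$ into the constant $K$, but shrinking $\varepsilon$ is cleaner.) Everything else is just substitution into the already-established theorem, and the step of solving $\alpha>5+\frac{8}{\gamma'-2}$ for $\gamma'$ is elementary. It may be worth remarking, as the authors do in the paragraph preceding the corollary, that the convergence \emph{rate} one gets this way degrades as $\gamma'$ is forced large when $\alpha$ is only slightly above $5$, which is why the sharper rate statement is kept in Theorem~\ref{thm:FISTA_flat} rather than folded into the corollary.
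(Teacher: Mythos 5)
Your proposal is correct and follows essentially the same route as the paper: the authors also observe that $\mathcal G^{\gamma'}_{\text{loc}}$ holds for any $\gamma'\geqslant\gamma$ and then apply Theorem~\ref{thm:FISTA_flat} with any $\gamma'>\max\left(\gamma,\,2+\frac{8}{\alpha-5}\right)$, which is exactly your argument. The only detail you add (justifying the monotonicity in $\gamma$ by shrinking $\varepsilon$ or rescaling $K$) is a correct and welcome elaboration of a step the paper leaves implicit.
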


Finally, observe that the growth properties required in Theorem~\ref{thm:FISTA_flat} are only local and thus, the decays are asymptotic. Even if the proof of Theorem~\ref{thm:FISTA_flat} relies on a Lyapunov analysis, it seems technically difficult in this H\"olderian setting to exhibit explicit bounds for a given number of iteration $n$. 


\subsection{Quadratic growth condition}

In this section, we consider that $F$ has a quadratic growth (denoted by $\mathcal{G}^2_\mu$ for the global growth condition and $\mathcal G_{\mu,loc}^2$ for the local one) with parameter $\mu>0$. This assumption is more restrictive than the Hölderian growth condition considered in Section \ref{sec:Holderian}, and allows to derive stronger convergence results. 

\begin{theoreme}
\label{thm:FISTA}
Let $F\in\mathcal{C}$ be a composite coercive function satisfying a quadratic growth condition $\mathcal G_\mu^2$ for some real parameter $\mu>0$. 
Let $\alpha \geqslant 3+\frac{3}{\sqrt{2}}$ and $\kappa=\frac{\mu}{L}$. Then there exist $\kappa_0>0$ such that for any $0<\kappa \leqslant \kappa_0$, the sequence $(x_n)_{n\in\N}$ generated by FISTA with $s=\frac{1}{L}$ satisfies:
 \begin{equation}\label{bornegene}
\forall n\geqslant \frac{3\alpha}{\sqrt{\kappa}},~F(x_n)-F^*\leqslant\frac{9}{4}e^{-2}M_0\left(\frac{8e}{3\sqrt{\kappa}}\alpha\right)^\frac{2\alpha}{3}n^{-\frac{2\alpha}{3}},
\end{equation}
where $M_{0}= F(x_0)-F^*$ denotes the potential energy of the system at initial time.
\end{theoreme}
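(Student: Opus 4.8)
The plan is to establish the non-asymptotic rate \eqref{bornegene} via a single Lyapunov functional, following the classical strategy used for \eqref{eq:Nesterov_ODE} under a strong minimizer assumption, but carefully tracking all constants and, crucially, replacing the uniqueness of the minimizer by the projection onto $X^*$. Concretely, for a parameter $\lambda>0$ to be tuned (of order $\sqrt\kappa$), I would define a discrete energy of the form
\begin{equation*}
E_n = t_n^2\bigl(F(x_n)-F^*\bigr) + \tfrac{1}{2}\bigl\|\lambda(z_n-x^*_n) + (x_n-x^*_n) + (t_n-1)(x_n-x_{n-1})\bigr\|^2,
\end{equation*}
where $t_n$ is (a suitable power of) $n$ consistent with $\alpha_n=\frac{n}{n+\alpha}$, $z_n$ is the usual auxiliary "momentum" point, and $x^*_n = \mathrm{proj}_{X^*}(x_n)$ is the projection of the current iterate onto the closed convex set $X^*$. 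The key algebraic point making this legitimate without uniqueness is that \eqref{eq:FISTA} plus convexity of $F$ gives, for \emph{any} fixed minimizer, a descent-type inequality, and since $F^*=F(x^*_n)$ for every $n$ while $\|x_n-x^*_n\| = d(x_n,X^*)$, one can use $x^*_n$ in place of a single $x^*$ at the cost of handling the variation $\|x^*_{n+1}-x^*_n\|$, which is controlled by $\|x_{n+1}-x_n\|$ by nonexpansiveness of the projection.

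The heart of the argument is to show that along FISTA iterates $E_{n+1}\le (1-c\sqrt\kappa)\,E_n$ for an explicit constant $c$, once $n\ge \frac{3\alpha}{\sqrt\kappa}$ and $\kappa\le\kappa_0$. The steps I would carry out in order: (i) write the standard one-step inequality for the composite proximal-gradient step, namely $F(x_{n+1}) \le F(y) + \langle \nabla/\text{subgradient terms}\rangle - \frac{1}{2s}\|x_{n+1}-y_n\|^2 + \tfrac{1}{2s}\|y_n - y\|^2 - \tfrac{1}{2s}\|x_{n+1}-y\|^2$ applied at $y=x_n$ and at $y=x^*_n$; (ii) combine these with weights matching the $t_n$ recursion to obtain the telescoping of the first part of $E_n$; (iii) bring in the quadratic growth $\mathcal G^2_\mu$ in the form $\frac{\mu}{2}\|x_n-x^*_n\|^2 \le F(x_n)-F^*$ to produce the strict contraction factor — this is exactly where $\kappa=\mu/L$ enters and forces $\lambda \asymp \sqrt\kappa$; (iv) absorb the projection-variation error terms $\|x^*_{n+1}-x^*_n\|^2 \le \|x_{n+1}-x_n\|^2$ into the negative terms produced by step (i) (the $-\frac{1}{2s}\|x_{n+1}-y_n\|^2$ term), which is possible precisely when $\kappa$ is small enough, yielding $\kappa_0$; (v) iterate the contraction from $n_0=\lceil \frac{3\alpha}{\sqrt\kappa}\rceil$ and bound $E_{n_0}$ by $M_0$ times a polynomial-in-$1/\sqrt\kappa$ factor, then extract $F(x_n)-F^* \le t_n^{-2}E_n \le t_n^{-2}(1-c\sqrt\kappa)^{n-n_0}E_{n_0}$ and optimize the elementary bound $(1-c\sqrt\kappa)^{n-n_0}\le e^{-c\sqrt\kappa(n-n_0)}$ against $n^{-2\alpha/3}$ to get the stated constant $\frac94 e^{-2}$ and exponent $\frac{2\alpha}{3}$; the threshold $\alpha\ge 3+\frac{3}{\sqrt2}$ should emerge from requiring the sign conditions in steps (ii)–(iv) to hold simultaneously.

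I expect the main obstacle to be step (iv): keeping the extra terms coming from $x^*_{n+1}\ne x^*_n$ under control while simultaneously preserving an explicit, clean contraction rate. In the uniqueness case these terms are simply absent, so the novelty (and the difficulty) is entirely in showing that the "slack" negative term $-\frac{1}{2s}\|x_{n+1}-y_n\|^2$ from the proximal step — together with the smallness of $\kappa$ — is enough to dominate $\|x^*_{n+1}-x^*_n\|^2$ and any cross terms it generates in the expansion of the squared norm in $E_{n+1}$. A secondary technical nuisance is matching the Chambolle--Dossal parameters $\alpha_n=\frac{n}{n+\alpha}$ to a $t_n$-type recursion precisely enough to get the exponent $\frac{2\alpha}{3}$ rather than something weaker; I would handle this by the standard change of variables $t_n \sim \frac{n}{\alpha-1}$ and a careful comparison of $t_{n+1}^2 - t_n^2$ with $\alpha_n t_n$, as in \cite{aujol2023fista}. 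Once the one-step contraction with explicit constants is in hand, the remainder (steps (v)) is routine optimization of elementary inequalities.
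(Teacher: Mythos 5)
Your treatment of the non-uniqueness issue is sound and close in spirit to the paper: building the Lyapunov energy on the moving projection $x_n^*=P_{X^*}(x_n)$, exploiting $F(x_n^*)=F^*$, $\|x_n-x_n^*\|=d(x_n,X^*)$, and controlling the variation $\|x_{n+1}^*-x_n^*\|\leqslant\|x_{n+1}-x_n\|$ by nonexpansiveness is exactly how the paper absorbs the extra terms (via Lemmas \ref{lem:tech1}--\ref{lem:tech2} and the projection inequalities, ending with $\langle x_{n+1}-x_n,x_{n+1}^*-x_n^*\rangle\leqslant\delta_{n+1}$). The gap is in the heart of your argument: the claimed uniform geometric contraction $E_{n+1}\leqslant(1-c\sqrt{\kappa})E_n$ for all $n\geqslant\frac{3\alpha}{\sqrt{\kappa}}$, with $\lambda\asymp\sqrt{\kappa}$, cannot hold for FISTA with the Chambolle--Dossal inertia $\alpha_n=\frac{n}{n+\alpha}\to1$. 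Such a contraction would force $F(x_n)-F^*\lesssim(1-c\sqrt{\kappa})^{n}$ up to a polynomial factor, i.e.\ an accelerated linear rate calibrated to $\sqrt{\kappa}$; but this scheme's momentum is not tuned to $\kappa$, and already on a one-dimensional quadratic $F(x)=\frac{\mu}{2}x^2$ with $s=\frac1L$ the iteration tends to $x_{n+1}=(1-\kappa)(2x_n-x_{n-1})$, whose characteristic roots have modulus $\sqrt{1-\kappa}$, so the true asymptotic decay of $F(x_n)-F^*$ is of order $(1-\kappa)^{n}$ — strictly slower than $(1-c\sqrt{\kappa})^{n}$ when $\kappa$ is small. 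A per-step factor $1-c\sqrt{\kappa}$ is the signature of constant-momentum schemes (V-FISTA with momentum $\frac{1-\sqrt{\kappa}}{1+\sqrt{\kappa}}$), not of the vanishing-damping scheme analyzed here, so steps (iii)--(v) of your plan cannot be completed as stated, and the final "optimization against $n^{-2\alpha/3}$" rests on a false premise.

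The mechanism the paper actually uses is different in kind: it takes $\lambda=\frac{2\alpha}{3}$ (independent of $\kappa$) in the energy \eqref{eq:Lyap_FISTA} and derives a recursion with a \emph{time-varying} decay factor, $E_{n+1}-\bigl(1-\frac{\lambda-2}{n}\bigr)E_n\leqslant\frac{\tilde C_1 E_n}{n^2}+\frac{\tilde C_2 E_{n+1}}{(n+1)^2}$, where the quadratic growth enters only through the perturbation constants $\tilde C_1,\tilde C_2=\mathcal{O}(1/\sqrt{\kappa})$ (via $h_n\leqslant\frac{E_n}{\kappa n^2}$), not through the contraction factor. Integrating this recursion (Lemma \ref{lem:nrj_fista}) yields the genuinely polynomial decay $E_n\leqslant E_{n_0}(n/n_0)^{-(\frac{2\alpha}{3}-2)}e^{\phi(n_0)}$, and the explicit constant in \eqref{bornegene}, including the base $\frac{8e\alpha}{3\sqrt{\kappa}}$, comes from choosing $n_0\asymp\frac{\alpha}{\sqrt{\kappa}}$ so that $\phi(n_0)$ is $\mathcal{O}(1)$; the threshold $\alpha\geqslant3+\frac{3}{\sqrt{2}}$ arises from bounding $\tilde C_1$ by $\frac54\sqrt{2/\kappa}\,P(\alpha)(1+o(1))$, not from sign conditions in a geometric contraction. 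The "comparable to exponential" complexity is obtained only a posteriori, by tuning $\alpha\sim\log(1/\varepsilon)$ in the polynomial bound, never through a fixed per-iteration contraction. To repair your proof you would need to abandon the $\lambda\asymp\sqrt{\kappa}$ scaling and the geometric-contraction target and instead prove and integrate a $\bigl(1-\frac{\lambda-2}{n}\bigr)$-type recursion of this form.
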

Theorem~\ref{thm:FISTA}, whose proof is detailed in Section \ref{sec:proof_FISTA}, is an extension of \cite[Theorem~6]{aujol2023fista} to the class of composite functions with a set of minimizers not reduced to a single point. Similar results can be demonstrated by assuming that $F$ is coercive and only satisfies some local quadratic growth condition. Indeed, the worst-case convergence rate of FISTA \eqref{eq:cv_FISTA_cv} is well known (see \cite{su2014differential,chambolle2015convergence}) and in particular, we know that the sequence $(F(x_n)-F^*)_{n\in \N}$ converges to $0$. Then, according to Lemma \ref{lem_cv_locale}, so does the distance $\left(d(x_n,X^*)\right)_{n\in\N}$ of the iterates to the set of minimizers. Thus, all the inequalities used and demonstrated in the proof of Theorem~\ref{thm:FISTA} remain valid for $n$ large enough and the obtained convergence rates thus hold asymptotically. Our main contribution is to show that under local quadratic growth assumption and without minimizer uniqueness assumption, the trajectory of FISTA iterates is of finite length and strongly converges to a minimizer of $F$:
\begin{theoreme}\label{thm:strong_cvg}
Let $F\in\mathcal{C}$ be a composite coercive function satisfying a local quadratic growth condition $\mathcal G_{\mu,loc}^2$ for some real parameter $\mu>0$. Then for any $\alpha\geqslant3+\frac{3}{\sqrt{2}}$, the sequence $(x_n)_{n\in\N}$ of iterates provided by \eqref{eq:FISTA} with $s=\frac{1}{L}$, satisfies:
\begin{equation}
F(x_n)-F^*=\mathcal{O}\left(n^{-\frac{2\alpha}{3}}\right),\quad \|x_n-x_{n-1}\|=\mathcal{O}\left(n^{-\frac{\alpha}{3}}\right).\label{asymptotic:rate}
\end{equation}
Moreover the trajectory $(x_n)_{n}$ has a finite length and strongly converges to a minimizer $x^*$ of $F$. 
\end{theoreme}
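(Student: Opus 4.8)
The plan is to run a Lyapunov argument adapted to the local quadratic growth setting, combining it with Lemma~\ref{lem_cv_locale} to reduce everything to an eventual global-looking estimate. First I would invoke the classical FISTA rate \eqref{eq:cv_FISTA_cv}, which already gives $F(x_n)-F^*\to 0$; by Lemma~\ref{lem_cv_locale} and coercivity this forces $d(x_n,X^*)\to 0$, so there is an index $N$ beyond which $\tfrac{\mu}{2}d(x_n,X^*)^2\leqslant F(x_n)-F^*$ holds for the actual iterates, and moreover $x_n$ stays in a fixed bounded neighbourhood of $X^*$. From $N$ on, the situation mimics the global quadratic growth case, so the quantitative machinery behind Theorem~\ref{thm:FISTA} (itself the extension of \cite[Theorem~6]{aujol2023fista}) applies verbatim with $x^*$ replaced by the projection $p_n:=\mathrm{proj}_{X^*}(x_n)$ and $\|x_n-x^*\|$ replaced by $d(x_n,X^*)$; this yields $F(x_n)-F^*=\mathcal{O}(n^{-2\alpha/3})$, the first half of \eqref{asymptotic:rate}.

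The second half, $\|x_n-x_{n-1}\|=\mathcal{O}(n^{-\alpha/3})$, is the crux and is obtained from the same Lyapunov energy. The standard FISTA energy has the form $\mathcal{E}_n = t_n^2\bigl(F(x_n)-F^*\bigr) + \tfrac{1}{2s}\|z_n - x^*\|^2$ with $z_n$ the usual auxiliary point $z_n = x_{n-1} + t_n(x_n-x_{n-1})$ (again with $x^*$ replaced by $p_n$, handled by the near-monotonicity of the distance to $X^*$). Under quadratic growth one augments this with a term controlling $\|x_n-x_{n-1}\|^2$; the discrete derivative of the augmented energy picks up a coefficient that is negative once $\kappa$ is small and $n$ is large, giving a differential-inequality-type decay. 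Tracking the exponents as in Theorem~\ref{thm:FISTA} shows the augmented energy, and in particular $t_n^2\|x_n-x_{n-1}\|^2$, decays like $n^{-2\alpha/3}$; since $t_n\sim n/\alpha$ this is exactly $\|x_n-x_{n-1}\|=\mathcal{O}(n^{-\alpha/3})$.

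Finite length then follows because $\alpha/3 > 1$ (guaranteed by $\alpha\geqslant 3+\tfrac{3}{\sqrt 2}>3$), so $\sum_n \|x_n-x_{n-1}\|<\infty$; a Cauchy sequence in $\mathcal H$ converges, and since $F(x_n)\to F^*$ with $F$ lower semicontinuous the limit lies in $X^*$, giving strong convergence to some minimizer $x^*$. The delicate points I expect are: (i) justifying that one may freely replace $x^*$ by the moving projection $p_n$ in the telescoping Lyapunov estimate — this needs a quantitative handle on $\|p_n-p_{n-1}\|$ or, more cheaply, the observation that $d(x_{n+1},X^*)\leqslant\|x_{n+1}-p_n\|$ so the distance function can only be exploited one-sidedly, which is precisely where $d(\cdot,X^*)$ rather than a fixed $\|\cdot-x^*\|$ must be used; and (ii) making the perturbation/threshold argument rigorous, i.e. checking that all inequalities in the proof of Theorem~\ref{thm:FISTA} that were derived under $\mathcal{G}^2_\mu$ only require the growth inequality at the iterates $x_n$ themselves, so that validity ``for $n$ large enough'' suffices. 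Both are bookkeeping rather than conceptual once the structure above is in place; the real work is the augmented-energy computation that produces the $\|x_n-x_{n-1}\|$ bound with the sharp exponent.
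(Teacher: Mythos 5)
Your proposal follows essentially the same route as the paper: Theorem \ref{thm:strong_cvg} is proved there exactly by combining the classical rate \eqref{eq:cv_FISTA_cv} with Lemma \ref{lem_cv_locale} and coercivity (so the quadratic growth inequality holds at the iterates for $n$ large enough), then noting that every inequality in the proof of Theorem \ref{thm:FISTA} --- which is already written with the moving projections $x_n^*$ and handles them via Lemma \ref{lem:tech1}, the obtuse-angle property of the projection onto the convex set $X^*$, and Lemma \ref{lem:lemma_FISTA_AB} --- remains valid asymptotically, after which $\|x_n-x_{n-1}\|=\mathcal{O}\left(n^{-\frac{\alpha}{3}}\right)$ with $\alpha>3$ gives summability, finite length, and strong convergence to a point of $X^*$. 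Two small corrections to your sketch: the paper does not augment the Lyapunov energy with a $\|x_n-x_{n-1}\|^2$ term in the quadratic case (that augmentation appears only in the H\"olderian proof); instead $\delta_n=\|x_n-x_{n-1}\|^2$ is bounded a posteriori via \eqref{eq:ineq_delta_n} together with $b_n\leqslant E_n$ and $h_n\leqslant E_n/(\kappa n^2)$. Also your exponent bookkeeping slips at the crux: Lemma \ref{lem:nrj_fista} gives $E_n=\mathcal{O}\left(n^{-\left(\frac{2\alpha}{3}-2\right)}\right)$, not $n^{-\frac{2\alpha}{3}}$, and it is $\delta_n$ itself --- not $t_n^2\delta_n$ --- that ends up being $\mathcal{O}\left(n^{-\frac{2\alpha}{3}}\right)$; as written, the claim $t_n^2\|x_n-x_{n-1}\|^2=\mathcal{O}\left(n^{-\frac{2\alpha}{3}}\right)$ would yield the unprovable (and unneeded) rate $\|x_n-x_{n-1}\|=\mathcal{O}\left(n^{-\frac{\alpha}{3}-1}\right)$, so the correct chain is energy $\rightarrow$ $n^2\delta_n$ $\rightarrow$ $\delta_n$.
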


Thus, under the quadratic growth property, we find the rate of convergence in $\mathcal{O}\left(n^{-\frac{2\alpha}{3}}\right)$  known until now only for FISTA under uniqueness of the minimizer. Moreover, observe that if the quadratic growth hypothesis is assumed to be global, Theorem \ref{thm:FISTA} provides explicit non-asymptotic bounds that can be used to parameterize FISTA as it was done in \cite{aujol2023fista}.

More precisely, let $\varepsilon>0$. The minimizers of the composite function $F$ can be characterized by the optimality condition $0\in \partial F(x)$, or equivalently $g(x)=0$ where:
\begin{equation}
    g(x)=L(x-x^+) :=L\left(x- \prox_{\frac{1}{L}h}(x-\frac{1}{L} \nabla f(x))\right),~x\in \mathcal{H},
\end{equation}
denotes the composite gradient mapping and $x^+ :=\prox_{\frac{1}{L}h}(x-\frac{1}{L} \nabla f(x))$. This last formulation is convenient for defining an approximate solution to the composite problem, and thus to deduce a tractable stopping criterion:
\begin{definition}[$\varepsilon$-solution]
Let $\varepsilon$ be the expected accuracy. The iterate $x_n$ is said to be an $\varepsilon$-solution of the problem
$\min_{x\in \mathcal{H}} F(x)$ if:
\begin{equation}
\|g(x_n)\| \leqslant \varepsilon.\label{stop}
\end{equation}
\end{definition}
Observe that in the differentiable case (i.e. when $h=0$), we have: $g(x)=\nabla f(x)$ so that an $\varepsilon$-solution is nothing more than an iterate $x_n$ satisfying:
\begin{equation}
\|g(x_n)\|=\|\nabla F(x_n)\| \leqslant \varepsilon.
\end{equation}
The notion of $\varepsilon$-solution can be seen as a good stopping criterion for an algorithm solving the composite optimization problem for the following reasons. It is numerically quantifiable and in addition, controlling the norm of the composite gradient mapping is roughly equivalent to having a control on the values of the objective function. Indeed using \cite[Theorem 1]{nesterov2007gradient} and \cite[Lemma 3.1]{aujol2023fista}, we can prove that the composite gradient mapping is controlled by the values of the objective function:
\begin{equation}
\forall x\in \R^N,~\frac{1}{2L}\|g(x)\|^2\leqslant F(x) -F^*.\label{control1}
\end{equation}
Hence, from Theorem~\ref{thm:FISTA}, a sufficient condition to reach an $\varepsilon$-solution is:
\begin{equation}
    \frac{9L}{2}e^{-2}M_0\left(\frac{8e}{3\sqrt{\kappa}}\alpha\right)^\frac{2\alpha}{3}n^{-\frac{2\alpha}{3}}
    \leqslant \varepsilon^2,
\end{equation}
which amounts to 
\begin{equation}
    n \geqslant \left( \sqrt{\frac{LM_0}{2}} \frac{3}{e \, \varepsilon}
    \right)^\frac{3}{\alpha}
  \frac{8e}{3\sqrt{\kappa}}\alpha.
\end{equation}
Minimizing the number of iterations to reach an $\varepsilon$-solution with respect to the friction parameter $\alpha$,
%
we thus deduce that choosing
\begin{equation}
    \alpha = \alpha_\varepsilon := 3 \log
  \left(
  \frac{3}{e \, \varepsilon}\sqrt{\frac{LM_0}{2}} 
  \right),
    \end{equation}
will ensure to reach an $\varepsilon$-solution in at most:
\begin{equation}
 \ n_{\varepsilon} :=\frac{8e^2}{\sqrt{\kappa}} \log
  \left(
  \frac{3}{e \, \varepsilon}\sqrt{\frac{LM_0}{2}} 
  \right)\label{nbiterationFISTA}
\end{equation}
iterations. In other words, for a fixed precision $\varepsilon>0$, it is possible to parameterize FISTA such that the number of iterations to reach an $\varepsilon$-solution is comparable to the number of iterations required by an algorithm with an exponential decay.

Notice that in the case of FISTA with the assumption of a unique minimizer \cite{aujol2023fista}, for the exact same choice of $\alpha=\alpha_\varepsilon$ (which is not the optimized choice stated in \cite[Theorem 3]{aujol2023fista}), the number of iterations (denoted by $n_\varepsilon^{FISTA,uniq})$ to reach an $\varepsilon$-solution is then:
\begin{eqnarray}
n_{\varepsilon}^{FISTA,uniq} &=&\frac{8e^2}{3\sqrt{\kappa}}\alpha_{\varepsilon} = \frac{8e^2}{\sqrt{\kappa}}\log\left(\frac{5\sqrt{LM_0}}{e\sqrt{2}\varepsilon}\right),
\end{eqnarray} 
which is better than that given by \eqref{nbiterationFISTA} for FISTA without the minimizer uniqueness assumption:
\begin{equation}
    n_{\varepsilon} =  n_{\varepsilon}^{FISTA,uniq} + \frac{8e^2}{\sqrt{\kappa}}\log\left(\frac{3\sqrt{2}}{5}\right) > n_{\varepsilon}^{FISTA,uniq}.
\end{equation}

\begin{remarque}
The convergence rate stated in Theorem~\ref{thm:FISTA} can be strengthened if there exists $\gamma>1$ such that some flatness condition is satisfied:
\begin{equation}
\forall x\in\mathcal{H},~F(x)-F^*\leqslant\frac{1}{\gamma}\langle \nabla F(x),x-x^*\rangle,
\end{equation}
for any minimizer $x^*\in X^*$, as it was done in \cite[Theorem 4]{aujol2023fista}.
\end{remarque}

\off{\color{blue}
The last contribution in this part is to show that under quadratic growth assumption and without minimizer uniqueness assumption, the trajectory of FISTA iterates is of finite length and the iterates converge strongly to a minimizer of $F$:
\begin{corollaire}\label{thm:strong_cvg}
    Let $F=f+h$ where $f$ is a convex differentiable function having a $L$-Lipschitz gradient for some $L>0$, and $h$ a proper convex l.s.c. function. Assume that $F$ is coercive, has a non-empty set of minimizers $X^*$ and satisfies a local quadratic growth condition $\mathcal G_{\mu,loc}^2$ for some real parameter $\mu>0$. Then for any $\alpha\geqslant3+\frac{3}{\sqrt{2}}$, \textbf{the sequence $(x_n)_{n\in\N}$ provided by \eqref{eq:FISTA} with $s=\frac{1}{L}$ converges strongly to a minimizer of $F$}.
\end{corollaire}
Corollary~\ref{thm:strong_cvg} follows directly from the rate of convergence $\|x_n-x_{n-1}\|=\mathcal O\left(n^{-\frac{\alpha}{3}}\right)$ established in Theorem~\ref{thm:FISTA} (and which still holds if the quadratic growth property is only local according to Lemma \ref{lem_cv_locale}) as long as $\alpha >3$.
}


\section{Asymptotic Vanishing Damping system under geometry conditions}\label{sec:AVD}
Let us now consider the \ref{AVD} system 
\begin{equation}\tag{AVD}\label{AVD}
\ddot{x}(t) + \frac{\alpha}{t} \dot{x}(t) + \nabla F(x(t)) = 0,
\end{equation}
which has been widely studied in the literature, in particular using Lyapunov-type approaches (see e.g. Table \ref{tab:avd_nu} for a short overview). Let us mention the references \cite{su2014differential, attouch2018fast, aujol2019optimal,aujol2023fista} that introduce the following energy:
\begin{equation}
\mathcal E(t) = t^2\left(F(x(t))-F^*\right) +\frac{1}{2}\|\lambda(x(t)-x^*)+t\dot{x}(t)\|^2\label{lyapunov:continu}
\end{equation}
with different values of $\lambda > 0$, depending on a given minimizer $x^*$ which is supposed to be constant in time. The uniqueness assumption of the minimizer is not necessary to obtain the results proved by Su, Boyd and Candès \cite{su2014differential} and Attouch, Chbani, Peypouquet and Redont \cite{attouch2018fast} in the convex case. On the other hand, when assuming an additional growth property, the fact that these energies depend on a fixed $x^*\in X^*$ is limiting for determining improved convergence rates. Our approach to extend classical analysis without the uniqueness assumption (similar to that in \cite{aujol2024heavyballmomentumnonstrongly}) consists in slightly modifying the Lyapunov energy \eqref{lyapunov:continu} as follows:
\begin{equation}
\mathcal E(t) = t^2\left(F(x(t))-F^*\right) +\frac{1}{2}\|\lambda(x(t)-x^*(t))+t\dot{x}(t)\|^2+\frac{\xi}{2}\|x(t) -x^*(t)\|^2\label{lyapunov:continu2}
\end{equation}
where $x^*(t)$ denotes the projection of the trajectory $x(t)$ onto the set of minimizers $X^*$:
$$x^*(t)=P_{X^*}(x(t)):=\arg\min\limits_{x^*\in X^*}\|x(t)-x^*\|^2.$$
Note that since $F$ is assumed to be continuous and convex, the set $X^*$ is actually a closed convex set and the projection onto $X^*$ is thus well defined. This modification of the energy $\mathcal E$ leads to a question when attempting to conduct the Lyapunov analysis: is $t\mapsto x^*(t)$ differentiable? 

The smoothness of $t\mapsto x^*(t)$ is related to the smoothness of $P_{X^*}$. In fact, if $P_{X^*}$ is directionally differentiable then $t\mapsto x^*(t)$ is right-differentiable (and left-differentiable) and its right-hand derivative is equal to $P^\prime_{X^*}(x(t),\dot{x}(t))$. We refer the reader to Appendix \ref{sec:discussion_NU} for more insightful explanations.

In \cite[Theorem~7.2]{bonnans1998sensitivity}, Bonnans et al. prove that if a closed convex set $\mathcal{S}\subset\mathcal{X}$ is second order regular at $P_\mathcal{S}(x)$ for some $x\in\mathcal{X}$, then $P_\mathcal{S}$ is directionally differentiable at $x$. 
\begin{definition}{\cite[Definition 2.1]{shapiro2016differentiability}}\label{def:second_order_regular}
A set $S$ is said second order regular at a point $\bar x\in S$ if for any sequence $(x_n)_{n\in \N}$ in $S$ of the form: $x_n=\bar x+t_nh +\frac{1}{2}t_n^2r_n$, where $(t_n)_{n\in \N}$ is monotonically non-increasing to $0$, $t_nr_n\rightarrow 0$ and $h\in \mathcal H$, it follows that:
$$\lim_{n\rightarrow +\infty} d(r_n,T_S^2(\bar x,h))=0,$$
where $T_S^2(\bar x,h)$ denotes the inner second order tangent set to $S$ in the direction $h$:
$$T_S^2(\bar x,h):=\left\{w\in \mathcal H~:~d(\bar x+th +\frac{1}{2} t^2w,S) = o\left(t^2\right) \right\}.$$
The set $S$ is said second order regular if it is second order regular at every point.
\end{definition}

We refer the reader to \cite{bonnans1998sensitivity,shapiro2016differentiability} to have a complete understanding of the complex notion of second order regularity. Keep in mind that sets having a $C^2$ boundary \cite{hiriart1982points} (in the sense that their boundary is locally a $C^2$ sub-manifold of $\mathcal H$) and polyhedral sets \cite{shapiro2016differentiability} are second-order regular, so that the projection onto these sets is actually directionally differentiable.

Assuming that the set of minimizers $X^*$ is second order regular instead of the classical uniqueness assumption, Theorem \ref{thm:AVD_flat} provides new bounds on $F(x(t))-F(x^*)$ and on $\norm{\dot x(t)}$ under Hölderian growth conditions. The proof is detailed in Appendix \ref{sec:proof_flat}.


\begin{theoreme}
\label{thm:AVD_flat}
Let $F$ be a convex differentiable function with a non-empty second order regular set of minimizers $X^*$. If $F$ is coercive and satisfies a Hölderian growth condition $\mathcal{G}_{loc}^\gamma$ for some $\gamma>2$. Then, for any $\alpha>\frac{9}{2}+\frac{6}{\gamma-2}$, the trajectories provided by \eqref{eq:Nesterov_ODE} satisfy
\begin{equation}
F(x(t))-F^*=\mathcal{O}\left(t^{-\frac{2\gamma}{\gamma-2}}\right),~\|\dot{x}(t)\|=\mathcal{O}\left(t^{-\frac{\gamma}{\gamma-2}}\right),
\end{equation}
and strongly converge to a minimizer of $F$.
\end{theoreme}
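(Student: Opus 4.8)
The plan is to run a Lyapunov analysis on the modified energy \eqref{lyapunov:continu2}, adapting the classical argument (as in \cite{aujol2019optimal,aujol2023fista}) to the case where the reference point $x^*$ is replaced by the moving projection $x^*(t) = P_{X^*}(x(t))$. First I would establish the regularity prerequisites: since $X^*$ is second order regular, the result of Bonnans et al. \cite{bonnans1998sensitivity} gives that $P_{X^*}$ is directionally differentiable, so $t\mapsto x^*(t)$ is right-differentiable with right derivative $P'_{X^*}(x(t),\dot x(t))$ and $\|\dot x^*(t)\| \le \|\dot x(t)\|$ a.e. (the projection is $1$-Lipschitz, hence so is $t\mapsto x^*(t)$, which gives absolute continuity and differentiability a.e.). A second elementary but crucial fact is that, by the variational characterization of the projection onto the convex set $X^*$, one has $\langle x(t)-x^*(t), x^*(t)-z\rangle \ge 0$ for all $z\in X^*$; in particular, taking $z=x^*(t+\mathrm{d}t)$-type arguments yields $\langle x(t)-x^*(t),\dot x^*(t)\rangle \le 0$, i.e. the "cross terms" coming from differentiating $\|x(t)-x^*(t)\|^2$ have a favorable sign. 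One also needs $\frac{\mathrm d}{\mathrm dt}\tfrac12\|x(t)-x^*(t)\|^2 = \langle \dot x(t)-\dot x^*(t), x(t)-x^*(t)\rangle = \langle \dot x(t), x(t)-x^*(t)\rangle$ by the same orthogonality, which is exactly what makes the moving projection tractable.

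Next I would differentiate $\mathcal E(t)$ from \eqref{lyapunov:continu2} with $\lambda$ chosen appropriately (something like $\lambda = \tfrac{2\alpha}{\gamma}$ or the analogous value used in the Hölderian analyses, with $\xi>0$ a small constant to be tuned), using $\ddot x = -\tfrac{\alpha}{t}\dot x - \nabla F(x)$ to eliminate second derivatives and the flatness-free convexity inequality $F(x(t))-F^* \le \langle \nabla F(x(t)), x(t)-x^*(t)\rangle$ (the case $\gamma=1$ of \eqref{eq:flatness}, automatic for convex $F$). After collecting terms, the growth condition $\mathcal G^\gamma_{loc}$ enters: once $d(x(t),X^*)=\|x(t)-x^*(t)\| \le \varepsilon$, we have $K\|x(t)-x^*(t)\|^\gamma \le F(x(t))-F^*$, which lets us absorb the troublesome $\|x(t)-x^*(t)\|^2$ terms. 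The condition $\alpha > \tfrac92 + \tfrac{6}{\gamma-2}$ should be exactly what is needed to make the coefficient of $t(F(x(t))-F^*)$ (and of $t\|\dot x(t)\|^2$) in $\dot{\mathcal E}(t)$ negative, giving a differential inequality of the form $\dot{\mathcal E}(t) \le -\tfrac{c}{t}\mathcal E(t)^{?} + (\text{lower order})$, or more simply $\dot{\mathcal E}(t)\le 0$ for $t$ large, or an inequality yielding $\mathcal E(t) = \mathcal O(t^{2-\frac{2\gamma}{\gamma-2}})$ — unwinding the definition of $\mathcal E$ then gives $F(x(t))-F^* = \mathcal O(t^{-\frac{2\gamma}{\gamma-2}})$ and, from the norm-squared term, $\|\lambda(x(t)-x^*(t)) + t\dot x(t)\|^2 = \mathcal O(t^{2-\frac{2\gamma}{\gamma-2}})$; combined with the growth estimate on $\|x(t)-x^*(t)\|$ (from $F(x(t))-F^*\ge K\|x(t)-x^*(t)\|^\gamma$, giving $\|x(t)-x^*(t)\| = \mathcal O(t^{-2/(\gamma-2)})$), the triangle inequality yields $\|\dot x(t)\| = \mathcal O(t^{-\frac{\gamma}{\gamma-2}})$.

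For the strong convergence, I would show the trajectory has finite length: since $\frac{\gamma}{\gamma-2} > 1$ for $\gamma>2$, the bound $\|\dot x(t)\| = \mathcal O(t^{-\gamma/(\gamma-2)})$ is integrable on $[t_0,\infty)$, so $\int_{t_0}^{\infty}\|\dot x(t)\|\,\mathrm dt < \infty$, hence $x(t)$ converges strongly to some $x_\infty$; and since $\|x(t)-x^*(t)\|\to 0$ with $X^*$ closed, $x_\infty \in X^*$. The main obstacle I expect is the differentiability issue for $t\mapsto x^*(t)$ and the rigorous handling of the cross terms: one must argue carefully that even though $x^*(\cdot)$ is only a.e. differentiable (or one-sided differentiable), the computation of $\dot{\mathcal E}(t)$ goes through — using that $\mathcal E$ is absolutely continuous (being a sum of products of absolutely continuous functions, with $F\circ x$ absolutely continuous since $F$ is differentiable with $\nabla F$ bounded along the bounded trajectory), so that $\mathcal E(t)-\mathcal E(t_0) = \int_{t_0}^t \dot{\mathcal E}(s)\,\mathrm ds$ holds and the pointwise a.e. bound on $\dot{\mathcal E}$ suffices. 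The sign of $\langle x(t)-x^*(t),\dot x^*(t)\rangle$ and the identity $\frac{\mathrm d}{\mathrm dt}\tfrac12\|x(t)-x^*(t)\|^2 = \langle\dot x(t),x(t)-x^*(t)\rangle$ are what keep the extra $\xi$-term from spoiling the estimate, and verifying these via the projection inequality is the technical heart of the argument; the rest is a bookkeeping of coefficients already familiar from the uniqueness-assumption proofs. I would defer the details of the second-order-regularity machinery and the precise coefficient computation to Appendix \ref{sec:proof_flat} as indicated.
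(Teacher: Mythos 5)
Your overall strategy coincides with the paper's: a Lyapunov analysis with the moving projection $x^*(t)=P_{X^*}(x(t))$, second order regularity to obtain right-differentiability of $t\mapsto x^*(t)$, the growth condition $\mathcal G^\gamma_{loc}$ to absorb the distance terms, and integrability of $\|\dot x(t)\|$ (exponent $\tfrac{\gamma}{\gamma-2}>1$) to get finite length and strong convergence; the paper implements it in Appendix \ref{sec:proof_flat} with the weighted energy $\mathcal J(t)=t^p\mathcal E(t)$, $p=1+\tfrac{4}{\gamma-2}$, $\lambda=\alpha-1-\tfrac p2$, $\xi=\lambda(\lambda+1-\alpha)<0$, adapting \cite[Lemmas 4.4 and 4.5]{sebbouh2020convergence} and Lemmas \ref{lem:control_v}, \ref{lem:min_g}, \ref{lem:right-diff}.

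The genuine gap is in your handling of the cross terms produced by $\dot x^*(t)$. Differentiating $\mathcal E$ yields two such terms, $-(\lambda^2+\xi)\langle x(t)-x^*(t),\dot x^*(t)\rangle$ and $-\lambda t\langle\dot x(t),\dot x^*(t)\rangle$. For the first one, the elementary projection inequality you invoke only gives $\langle x(t)-x^*(t),\dot x^*(t)\rangle\leqslant 0$, and for any parameter choice that closes the estimate one has $\lambda^2+\xi>0$ (with the paper's choice, $\lambda^2+\xi=\lambda(\alpha-1-p)>0$), so this one-sided sign is precisely the \emph{unfavorable} one; what is needed is the exact orthogonality $\langle x(t)-x^*(t),\dot x^*(t)\rangle=0$, which you assert (``by the same orthogonality'') but never establish. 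The paper obtains it from \cite[Theorem~7.2]{bonnans1998sensitivity}, which gives $\langle x-P_{X^*}(x),P'_{X^*}(x,d)\rangle=0$; alternatively it follows from the fact that $\tfrac12 d(\cdot,X^*)^2$ is differentiable with gradient $I-P_{X^*}$, but some such argument must be supplied. For the second cross term you say nothing: one needs $\langle\dot x(t),\dot x^*(t)\rangle\geqslant0$, which the paper derives from the monotonicity of the projection; the $1$-Lipschitz bound $\|\dot x^*\|\leqslant\|\dot x\|$ you mention would only give $-\lambda t\langle\dot x,\dot x^*\rangle\leqslant\lambda t\|\dot x\|^2$, a positive contribution that the small negative kinetic coefficient $(\lambda+1-\alpha)t\|\dot x\|^2=-\tfrac p2\,t\|\dot x\|^2$ cannot absorb, so this term cannot be treated by crude bounds. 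Finally, two bookkeeping points your plan glosses over: the tentative choice $\xi>0$ is incompatible with this family of energies (one is forced into $2+p<\lambda\leqslant\alpha-1-\tfrac p2$, hence $\xi=\lambda(\lambda+1-\alpha)<0$), and because $\xi<0$ a bound on the energy does \emph{not} directly yield the rate: an extra absorption step using $\mathcal G^\gamma_{loc}$ (Lemma \ref{lem:flat_cond} together with Lemmas \ref{lem:control_v} and \ref{lem:min_g}) is required both for the rate on $F(x(t))-F^*$ and for the bound on $\|\dot x(t)\|$.
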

Unlike Aujol et al. \cite{aujol2019optimal} and Luo et al. \cite{Luo2023}, no flatness condition on $F$ or uniqueness of the minimizer is needed here. The only added hypothesis is the regularity of the set of minimizers. This hypothesis may be technical, but seems difficult to remove.
Note that the bound on $\|\dot{x}(\cdot)\|$ implies that the trajectory $x(\cdot)$ has a finite length and strongly converges to a minimizer of $F$. 

Finally, we consider the class of convex differentiable functions having a quadratic growth. Applying the strategy described at the beginning of this section and in Appendix \ref{sec:discussion_NU}, we propose an extension of \cite[Theorem~5]{aujol2023fista} to functions having a set of minimizers not reduced to a single point, and complement this theorem with a result on $\|\dot x(\cdot)\|$ ensuring that the trajectory $x(\cdot)$ has finite length and thus strongly converges to a minimizer $x^*$ of $F$.

\begin{theoreme}\label{thm:continu1}
Let $F$ be a convex differentiable function with a non-empty second order regular set of minimizers $X^*$. 
Assume that $F$ is coercive and satisfies a local quadratic growth condition $\mathcal{G}^2_{\mu,loc}$ for some $\mu>0$. Let $x$ be a solution of \eqref{eq:Nesterov_ODE} for some $t_0\geqslant0$ and $\alpha>0$. If $\alpha>3$ and $\mu$ is small enough then we have:
\begin{equation}
F(x(t)) - F^* =\mathcal O\left(t^{-\frac{2\alpha}{3}}\right),~\|\dot{x}(t)\|=\mathcal{O}\left(t^{-\frac{\alpha}{3}}\right).
\end{equation}
and the trajectory $x(\cdot)$ strongly converges to a minimizer of $F$.
\end{theoreme}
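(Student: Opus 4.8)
The plan is to run the modified Lyapunov analysis based on the energy \eqref{lyapunov:continu2} with $x^*(t)=P_{X^*}(x(t))$, exactly as announced at the start of Section~\ref{sec:AVD}, specializing the argument of \cite[Theorem~5]{aujol2023fista} to the non-unique-minimizer setting. First I would establish that $t\mapsto x^*(t)$ is (at least right-) differentiable along the trajectory: by coercivity and the standard convex rate \eqref{eq:cv_FISTA_cv}-analogue for \eqref{AVD}, $F(x(t))-F^*\to 0$ and hence (Lemma~\ref{lem_cv_locale} in the continuous analogue, or directly) $d(x(t),X^*)\to 0$, so eventually the local quadratic growth $\mathcal G^2_{\mu,\mathrm{loc}}$ is active; since $X^*$ is assumed second order regular, Bonnans et al. \cite[Theorem~7.2]{bonnans1998sensitivity} gives that $P_{X^*}$ is directionally differentiable and, as recalled in Appendix~\ref{sec:discussion_NU}, $\tfrac{d}{dt}x^*(t)=P'_{X^*}(x(t),\dot x(t))$ for a.e.\ $t$, with the key nonexpansiveness-type estimate $\|\dot x^*(t)\|\leqslant \|\dot x(t)\|$ and the orthogonality relation $\langle x(t)-x^*(t),\dot x^*(t)\rangle = 0$ (this last because $x(t)-x^*(t)$ is normal to $X^*$ at $x^*(t)$ and $\dot x^*(t)$ is tangent). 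These two facts are what let the analysis with a moving $x^*(t)$ mimic the fixed-$x^*$ one.

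Next I would differentiate $\mathcal E(t)$ from \eqref{lyapunov:continu2}. Writing $a(t)=\lambda(x(t)-x^*(t))+t\dot x(t)$, using the ODE $\ddot x = -\tfrac{\alpha}{t}\dot x -\nabla F(x)$ one gets, as in \cite{aujol2023fista}, a term $2t(F(x)-F^*)+t^2\langle\nabla F(x),\dot x\rangle$ from the first piece, $\langle a,(\lambda-\alpha+1)\dot x + \lambda(x-x^*) - t\nabla F(x) -\lambda\dot x^*\rangle$ from the second, and $\xi\langle x-x^*,\dot x-\dot x^*\rangle$ from the third. The orthogonality $\langle x-x^*,\dot x^*\rangle=0$ kills the awkward cross terms produced by the moving projection, and the extra term $\langle a,-\lambda\dot x^*\rangle$ is controlled by $\|a\|\,\|\dot x\|$ which is absorbed into the dissipative $-t\|\dot x\|^2$-type terms (this is where $\alpha>3$ and "$\mu$ small enough", i.e.\ $\kappa=\mu/L$ small, enter, exactly as in the fixed-minimizer proof). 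Invoking local quadratic growth $\tfrac{\mu}{2}\|x-x^*\|^2=\tfrac{\mu}{2}d(x,X^*)^2\leqslant F(x)-F^*$ together with convexity $\langle\nabla F(x),x-x^*\rangle\geqslant F(x)-F^*$ one chooses $\lambda$ and $\xi$ (of the order $\lambda\sim \alpha$, $\xi\sim \mu\alpha^{?}$ as in \cite{aujol2023fista}) so that $\mathcal E'(t)\leqslant \tfrac{c}{t}\mathcal E(t)$ for some $c<0$ asymptotically, or more precisely $\tfrac{d}{dt}\big(t^{-p}\mathcal E(t)\big)\leqslant 0$ with $p=\tfrac{2\alpha}{3}-2$ after the usual change of variables. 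This yields $\mathcal E(t)=\mathcal O(t^{p})$, hence $t^2(F(x(t))-F^*)=\mathcal O(t^{p})$, i.e.\ $F(x(t))-F^*=\mathcal O(t^{-2\alpha/3})$.

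From the bound on $\mathcal E(t)$ I would also extract $\|a(t)\|^2=\mathcal O(t^{p})$, i.e.\ $\|\lambda(x(t)-x^*(t))+t\dot x(t)\|=\mathcal O(t^{\alpha/3-1})$; combined with $\|x(t)-x^*(t)\|=d(x(t),X^*)=\mathcal O(t^{-\alpha/3})$ (from quadratic growth and the function-value rate) this gives $\|\dot x(t)\|=\mathcal O(t^{-\alpha/3})$. Since $\alpha>3$, $\alpha/3>1$, so $\int^{\infty}\|\dot x(t)\|\,dt<\infty$, the trajectory has finite length, hence $x(t)$ converges strongly to some point; since $F(x(t))\to F^*$ and $F$ is l.s.c.\ that limit lies in $X^*$. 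The main obstacle I anticipate is the rigorous handling of the moving projection $x^*(t)$: justifying its differentiability a.e.\ and in particular the absolute continuity needed to legitimately write $\mathcal E'(t)$ and integrate it, and verifying that the extra terms it contributes are genuinely dominated (the inequality $\|\dot x^*(t)\|\leqslant\|\dot x(t)\|$ together with the orthogonality relation is the crux — without second order regularity of $X^*$ one cannot even guarantee the needed one-sided differentiability, which is precisely why that hypothesis is imposed). Everything else is a routine, if lengthy, adaptation of the Lyapunov computation in \cite[Theorem~5]{aujol2023fista}.
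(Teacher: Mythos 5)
Your route is structurally the same as the paper's: the energy \eqref{lyapunov:continu2} with the moving projection $x^*(t)$, second order regularity and \cite[Theorem~7.2]{bonnans1998sensitivity} to get right-differentiability of $t\mapsto x^*(t)$ together with the orthogonality $\langle x(t)-x^*(t),\dot{x^*}(t)\rangle=0$, localization of the growth condition via coercivity and the $\mathcal O(t^{-2})$ convex rate, the Gronwall-type integration of the differential inequality, and the finite-length conclusion. The genuine gap is in the one step that is specific to the non-unique setting: the residual term produced by the moving projection. Writing $a(t)=\lambda(x(t)-x^*(t))+t\dot x(t)$, after the orthogonality relation this residual reduces to $-\lambda t\langle \dot x(t),\dot{x^*}(t)\rangle$, and you propose to bound it by $\lambda\|a(t)\|\,\|\dot x(t)\|$ (via $\|\dot{x^*}\|\leqslant\|\dot x\|$) and absorb it into the dissipation. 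This cannot work quantitatively: with $\lambda=\tfrac{2\alpha}{3}$ the only dissipation in $\|\dot x\|^2$ coming out of the fixed-minimizer computation is $(\lambda+1-\alpha)t\|\dot x\|^2=-\left(\tfrac{\alpha}{3}-1\right)t\|\dot x\|^2$, while your bound contributes up to $+\lambda t\|\dot x\|^2=\tfrac{2\alpha}{3}t\|\dot x\|^2$, and $\tfrac{2\alpha}{3}>\tfrac{\alpha}{3}-1$ for every $\alpha$. Any Young-type rebalancing ($\lambda\|a\|\|\dot x\|\leqslant\tfrac{\lambda^2}{2\beta}\|a\|^2+\tfrac{\beta}{2}\|\dot x\|^2$ with $\beta\lesssim t$) leaves an extra term of the form $\tfrac{C}{t}\mathcal E(t)$ with $C$ of order $\alpha$, not tending to $0$; in the integration this lowers the exponent $\lambda-2$ and destroys the claimed $t^{-\frac{2\alpha}{3}}$ rate. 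The paper does not absorb anything: firm nonexpansiveness (monotonicity) of $P_{X^*}$ gives $\langle x(t+h)-x(t),x^*(t+h)-x^*(t)\rangle\geqslant0$, hence $\langle\dot x(t),\dot{x^*}(t)\rangle\geqslant0$, so the residual term is nonpositive and is simply discarded (Appendix \ref{sec:discussion_NU}). This sign property, not the norm estimate $\|\dot{x^*}\|\leqslant\|\dot x\|$, is the missing ingredient; once you have it, your argument coincides with the paper's.

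A secondary slip: your intermediate exponents are reversed. The monotone quantity is $t^{\frac{2\alpha}{3}-2}\mathcal E(t)e^{\Phi(t)}$ (decreasing), which gives $\mathcal E(t)=\mathcal O\bigl(t^{-(\frac{2\alpha}{3}-2)}\bigr)$ and $\|a(t)\|=\mathcal O\bigl(t^{1-\frac{\alpha}{3}}\bigr)$, not $\mathcal O\bigl(t^{\frac{2\alpha}{3}-2}\bigr)$ and $\mathcal O\bigl(t^{\frac{\alpha}{3}-1}\bigr)$ as written; with your exponents the chain would not yield $\|\dot x(t)\|=\mathcal O\bigl(t^{-\frac{\alpha}{3}}\bigr)$. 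The intended conclusion is clearly the correct one, so this is a typo-level issue rather than a conceptual one.
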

Note that this rate in $\mathcal O\left(t^{-\frac{2\alpha}{3}}\right)$ was already known but for classes of functions satisfying stronger geometric assumptions, in particular for strongly convex functions in \cite[Theorem 8]{su2014differential} and for convex functions having a strong minimizer in \cite[Theorem 3.12]{attouch2017asymptotic}. 

Assuming now that $F$ satisfies a global quadratic growth hypothesis, explicit bounds on the decay of the functional can be calculated. This will subsequently allow for an optimized choice of friction parameter values $\alpha$:
\begin{proposition}\label{thm:continu2}
Let $F$ be a convex differentiable function with a non-empty second order regular set of minimizers $X^*$. 
Assume that $F$ satisfies a global quadratic growth condition $\mathcal{G}^2_{\mu}$ for some $\mu>0$. Let $x$ be a solution of \eqref{eq:Nesterov_ODE} for some $t_0\geqslant0$ and $\alpha>0$. If $\alpha>3$ and $\mu$ is small enough then we have:
\begin{equation}
\forall t\geqslant \frac{\alpha r^*}{3\sqrt{\mu}}\geqslant t_0,~F(x(t))-F^*\leqslant C_1e^{\frac{2}{3}C_2(\alpha-3)}M_0\left(\frac{\alpha r^*}{3t\sqrt{\mu}}\right)^{\frac{2\alpha}{3}},\label{eq:avd_rate}
\end{equation}\normalsize
where $M_0=F(x(t_0))-F^*+\frac{1}{2}\|\dot{x}(t_0)\|^2$, $r^*\simeq 3$ is the unique positive real root of the polynomial: $r\mapsto r^3-r^2-2(1+\sqrt{2})r-4$ and
\begin{eqnarray*}
C_1 = 1+\frac{2}{r^{*}}+\frac{4}{r^{*2}},~
C_2 =\frac{1}{r^*} + \frac{1+\sqrt{2}}{r^{*2}}+\frac{4}{3r^{*3}}.
\end{eqnarray*}
\end{proposition}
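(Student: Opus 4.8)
The plan is to run a Lyapunov analysis using the modified energy \eqref{lyapunov:continu2}, i.e.
\[
\mathcal{E}(t) = t^2\bigl(F(x(t))-F^*\bigr) + \tfrac{1}{2}\|\lambda(x(t)-x^*(t)) + t\dot{x}(t)\|^2 + \tfrac{\xi}{2}\|x(t)-x^*(t)\|^2,
\]
with $x^*(t) = P_{X^*}(x(t))$, and to show that for a suitable rescaled time variable this energy satisfies a differential inequality of the form $\mathcal{E}'(t) \le \tfrac{c}{t}\mathcal{E}(t)$, which after integration gives the polynomial decay $\mathcal{E}(t) = \mathcal{O}(t^{-p})$ with $p$ governed by $\alpha$. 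First I would justify that $t\mapsto x^*(t)$ is (right-)differentiable with $\dot{x}^*(t) = P'_{X^*}(x(t),\dot{x}(t))$, invoking the second order regularity of $X^*$ together with \cite[Theorem~7.2]{bonnans1998sensitivity}; since $x^*(t)$ is the projection, the key geometric fact I will exploit is the obtuse-angle inequality $\langle x(t)-x^*(t), \dot{x}^*(t)\rangle = 0$ (the trajectory-minus-projection vector is orthogonal to the tangent cone of $X^*$, and $\dot{x}^*(t)$ lies in that tangent cone), so that $\tfrac{d}{dt}\|x(t)-x^*(t)\|^2 = 2\langle x(t)-x^*(t),\dot{x}(t)\rangle$ exactly as if $x^*$ were constant. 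This is what makes the extra terms in $\mathcal{E}$ differentiate cleanly.

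Then I would differentiate $\mathcal{E}$, substitute $\ddot{x}(t) = -\tfrac{\alpha}{t}\dot{x}(t) - \nabla F(x(t))$, and use: (i) convexity of $F$ in the form $F(x(t))-F^* \le \langle \nabla F(x(t)), x(t)-x^*(t)\rangle$ (note $F$ is convex, so this is \eqref{eq:flatness} with $\gamma=1$ applied at the nearest minimizer); (ii) the global quadratic growth $\tfrac{\mu}{2}\|x(t)-x^*(t)\|^2 \le F(x(t))-F^*$; and (iii) the basic decay estimate $\tfrac{d}{dt}(F(x(t))-F^*) = \langle\nabla F(x(t)),\dot{x}(t)\rangle = -\tfrac{\alpha}{t}\|\dot{x}(t)\|^2 - \tfrac{d}{dt}\tfrac{1}{2}\|\dot x(t)\|^2$ to control the cross terms. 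The parameters $\lambda$ and $\xi$ should be tuned — as in \cite[Theorem~5]{aujol2023fista} — so that the coefficient of $\|\dot{x}(t)\|^2$ is nonpositive and the quadratic-growth term absorbs the troublesome $\langle\nabla F, \dot x\rangle$ contributions; this forces $\mu$ to be small enough (equivalently $\kappa$ small), which is where the hypothesis ``$\mu$ small enough'' and the root $r^*$ of $r^3 - r^2 - 2(1+\sqrt{2})r - 4$ enter: $r^*$ is the threshold at which the quadratic form in $(\lambda,\xi)$ becomes negative semidefinite, and the change of variables $t \mapsto \tfrac{\alpha r^*}{3\sqrt{\mu}}\,\tau$ normalizes the onset time to the stated lower bound $t \ge \tfrac{\alpha r^*}{3\sqrt{\mu}}$.

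Once the inequality $\mathcal{E}'(t) \le \tfrac{2(\alpha-3)}{3 t}\mathcal{E}(t)$ (or the appropriate exponent) is established for $t \ge \tfrac{\alpha r^*}{3\sqrt{\mu}}$, Grönwall's lemma gives $\mathcal{E}(t) \le \mathcal{E}(t_1)\,(t/t_1)^{\frac{2(\alpha-3)}{3}}$ — wait, with the correct sign I want decay, so the integration yields $\mathcal{E}(t) \le \mathcal{E}(t_1) (t_1/t)^{2\alpha/3}$ after accounting for the $t^2$ prefactor; combining $t^2(F(x(t))-F^*) \le \mathcal{E}(t)$ with an explicit bound on $\mathcal{E}$ at the initial (rescaled) time in terms of $M_0 = F(x(t_0))-F^* + \tfrac12\|\dot x(t_0)\|^2$ produces \eqref{eq:avd_rate}, with $C_1$ and $C_2$ emerging as the explicit constants from bounding the $\lambda$- and $\xi$-terms of $\mathcal{E}$ at the starting time (hence their dependence on $1/r^*$, $1/r^{*2}$, $1/r^{*3}$). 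I expect the main obstacle to be the bookkeeping that keeps all constants explicit and sharp enough to recover the clean closed forms for $C_1, C_2$ and the precise time threshold — in particular verifying that the derivative of $\|x(t)-x^*(t)\|^2$ behaves as in the constant-$x^*$ case despite $x^*(t)$ moving, and then carefully choosing $\lambda,\xi$ as functions of $r^*$ so the residual quadratic form is exactly negative semidefinite at the critical value; the rest is a by-now-standard, if lengthy, Lyapunov computation mirroring \cite{aujol2023fista}.
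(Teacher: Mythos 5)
Your overall route --- a Lyapunov energy built on the moving projection $x^*(t)=P_{X^*}(x(t))$, right-differentiability via second order regularity and \cite[Theorem~7.2]{bonnans1998sensitivity}, a differential inequality integrated by a Gr\"onwall-type argument, then an explicit starting time $t_1=\frac{\alpha r^*}{3\sqrt{\mu}}$ --- is indeed the paper's route (the paper takes $\lambda=\frac{2\alpha}{3}$ and, for the quadratic case, drops the $\xi$-term). But there is a genuine gap in how you dispose of the terms created by the moving projection. The orthogonality $\langle x(t)-x^*(t),\dot{x}^*(t)\rangle=0$ only kills the contributions proportional to $x(t)-x^*(t)$ (the $\xi$-term and the $\lambda^2$-part of the expanded square). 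Differentiating $\frac{1}{2}\|\lambda(x(t)-x^*(t))+t\dot{x}(t)\|^2$ also produces the cross term $-\lambda t\,\langle\dot{x}(t),\dot{x}^*(t)\rangle$, which does not vanish, so it is \emph{not} true that the energy ``differentiates exactly as if $x^*$ were constant''. The paper needs the additional sign fact $\langle\dot{x}(t),\dot{x}^*(t)\rangle\geqslant0$, proved from the monotonicity of the projection ($\langle x(t+h)-x(t),x^*(t+h)-x^*(t)\rangle\geqslant0$, then $h\to0$), and only obtains $\dot{\mathcal{E}}(t)\leqslant D(t)$ as an \emph{inequality}, where $D(t)$ is the constant-$x^*$ expression. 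This is exactly the step the paper flags as the ``only additional challenge''; as written, your plan skips it.

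A secondary but substantive point concerns the constants. The inequality actually established is not a homogeneous $\mathcal{E}'(t)\leqslant-\frac{\lambda-2}{t}\mathcal{E}(t)$ coming from making a quadratic form negative semidefinite (your displayed inequality even carries the wrong sign); it is the perturbed inequality $\mathcal{E}'(t)+\frac{\lambda-2}{t}\mathcal{E}(t)\leqslant\phi(t)\mathcal{E}(t)$ with $\phi(t)=\frac{2\alpha(\alpha-3)}{9\mu t^2}\bigl(\sqrt{\mu}+\frac{2\alpha}{3t}(1+\sqrt{2})+\frac{4\alpha^2}{9\sqrt{\mu}t^2}\bigr)$, integrated via Lemma \ref{lem:right-diff} so that $t\mapsto\mathcal{E}(t)\,t^{\lambda-2}e^{\Phi(t)}$ with $\Phi(t)=\int_t^{+\infty}\phi(s)\,ds$ is nonincreasing. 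The factor $e^{\frac{2}{3}C_2(\alpha-3)}$, hence $C_2$ and the polynomial defining $r^*$, come from evaluating $\Phi(t_1)$ at $t_1=\frac{\alpha r^*}{3\sqrt{\mu}}$ --- not, as you suggest, from bounding the energy at the starting time; $C_1$ comes from bounding $\mathcal{E}(t_1)\leqslant C_1\,t_1^2 M_0$ using the quadratic growth together with the decay of the mechanical energy $F(x(t))-F^*+\frac{1}{2}\|\dot{x}(t)\|^2$ (needed because $M_0$ is taken at $t_0$, not $t_1$, and which your sketch never invokes). Finally, $\mathcal{E}$ itself decays like $(t_1/t)^{\frac{2\alpha}{3}-2}$; the stated $t^{-\frac{2\alpha}{3}}$ rate only appears after dividing by $t^2$ through $F(x(t))-F^*\leqslant\mathcal{E}(t)/t^2$.
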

We give a simplified analysis of this bound by removing some of the constants in the bound \eqref{eq:avd_rate} for more readability. Let $\varepsilon>0$ be the desired precision on the functional decay $F(x(t))-F^*$. For any $\alpha>3$, the minimum time $t$ to reach the precision $\varepsilon$ is at least in:
 \begin{eqnarray*}
 \left(\frac{\alpha}{t\sqrt{\mu}}\right)^\frac{2\alpha}{3} \leqslant \varepsilon &\Longleftrightarrow & t \geqslant \frac{\alpha}{\sqrt{\mu}} \left(\frac{1}{\varepsilon}\right)^\frac{3}{2\alpha}
\end{eqnarray*}
which corresponds to the polynomial rate stated in Theorem \ref{thm:continu1}. Choosing now $\alpha = C \log\left(\frac{1}{\varepsilon}\right)$ for a well-chosen real constant $C>0$, the minimum time $t$ to reach an $\varepsilon$-solution is at least in:
\begin{eqnarray*}
 \left(\frac{\alpha}{t\sqrt{\mu}}\right)^\frac{2\alpha}{3} \leqslant \varepsilon &\Longleftrightarrow & t \geqslant \frac{Ce^\frac{3}{2C}}{\sqrt{\mu}} \log\left(\frac{1}{\varepsilon}\right)
\end{eqnarray*}
which is comparable to a fast exponential decay of the trajectory.

\section{Proofs of Theorem \ref{thm:FISTA_flat} and Theorem \ref{thm:FISTA}}\label{sec:proofs_main}


The proofs of Theorems \ref{thm:FISTA_flat},  \ref{thm:FISTA} and  \ref{thm:strong_cvg} are based on a Lyapunov analysis involving similar terms. In particular, the convergence proofs of Theorems \ref{thm:FISTA} and \ref{thm:strong_cvg} are built around
\begin{equation}
E_n=\frac{2n^2}{L}(F(x_n)-F^*)+\left\|\lambda(x_{n-1}-x_{n-1}^*)+n(x_n-x_{n-1})\right\|^2,
\label{eq:Lyap_FISTA}
\end{equation}
where $\lambda>0$, while we consider the following discrete Lyapunov energy for Theorem \ref{thm:FISTA_flat}:
\begin{equation}
\mathcal{E}_n=\frac{2n^2}{L}(F(x_n)-F^*)+\left\|\lambda(x_{n}-x_{n}^*)+n\alpha_n(x_n-x_{n-1})\right\|^2+\xi\|x_n-x_n^*\|^2+\lambda n \alpha_n^2\|x_n-x_{n-1}\|^2,
\end{equation}\normalsize
where $\lambda>0$, $\xi<0$ and $x_n^*$, $n\in \N$, denotes the projection of $x_n$ onto the set of minimizers $X^*$. For the sake of clarity, we introduce the following notations:
\begin{equation}
\begin{gathered}
w_n=\displaystyle\frac{2}{L}(F(x_n)-F^*),~h_n=\|x_n-x^*_n\|^2,~\delta_n=\|x_n-x_{n-1}\|^2,\\
\gamma_n^*=\|x_n^*-x_{n-1}^*\|^2,~\alpha_n=\displaystyle\frac{n}{n+\alpha}.
\end{gathered}\label{eq:notations}
\end{equation}
Both convergence proofs rely on two technical lemma. The first one, whose proof is given in Section \ref{sec:proof_lemma_tech1}, is crucial for handling the non-uniqueness of the minimizer:
\begin{lemme}
\label{lem:tech1}
For all $n\in\N^*$, the following equalities hold:
\begin{enumerate}
	\item $\langle x_n-x_n^*,x_n-x_{n-1}\rangle=\frac{1}{2}(h_n-h_{n-1}+\delta_n-\gamma_n^*)+\langle x_{n-1}-x_{n-1}^*,x_n^*-x_{n-1}^*\rangle.$
	\item $\langle x_{n-1}-x_{n-1}^*,x_n-x_{n-1}\rangle=\frac{1}{2}(h_n-h_{n-1}-\delta_n+\gamma_n^*)+\langle x_{n}-x_{n}^*,x_n^*-x_{n-1}^*\rangle,$
\end{enumerate}
\end{lemme}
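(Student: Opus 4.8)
The plan is to prove both identities by direct algebraic manipulation, expanding the squared norms in terms of inner products and collecting terms. The key observation is that each identity involves the same four quantities $h_n=\|x_n-x_n^*\|^2$, $h_{n-1}=\|x_{n-1}-x_{n-1}^*\|^2$, $\delta_n=\|x_n-x_{n-1}\|^2$ and $\gamma_n^*=\|x_n^*-x_{n-1}^*\|^2$, plus a cross term pairing one of the ``residuals'' $x_k-x_k^*$ with the increment of projections $x_n^*-x_{n-1}^*$. So the natural route is to introduce the decomposition $x_n-x_{n-1}=(x_n-x_n^*)-(x_{n-1}-x_{n-1}^*)+(x_n^*-x_{n-1}^*)$, which simply regroups the six points into the three vectors appearing on the right-hand side.

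For the first identity, I would start from $\langle x_n-x_n^*,\,x_n-x_{n-1}\rangle$ and substitute the decomposition above into the second argument, obtaining
\begin{equation*}
\langle x_n-x_n^*,\,x_n-x_n^*\rangle-\langle x_n-x_n^*,\,x_{n-1}-x_{n-1}^*\rangle+\langle x_n-x_n^*,\,x_n^*-x_{n-1}^*\rangle.
\end{equation*}
The first term is $h_n$. For the middle inner product I would use the polarization identity $\langle a,b\rangle=\tfrac12(\|a\|^2+\|b\|^2-\|a-b\|^2)$ with $a=x_n-x_n^*$ and $b=x_{n-1}-x_{n-1}^*$, noting that $a-b=(x_n-x_{n-1})-(x_n^*-x_{n-1}^*)$, so $\|a-b\|^2=\delta_n+\gamma_n^*-2\langle x_n-x_{n-1},x_n^*-x_{n-1}^*\rangle$. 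Plugging in gives $\langle x_n-x_n^*,x_{n-1}-x_{n-1}^*\rangle=\tfrac12(h_n+h_{n-1}-\delta_n-\gamma_n^*)+\langle x_n-x_{n-1},x_n^*-x_{n-1}^*\rangle$. Substituting back and simplifying, the $h_n$ contributions combine to $h_n-\tfrac12 h_n=\tfrac12 h_n$ balanced against $-\tfrac12 h_{n-1}$, the $\delta_n,\gamma_n^*$ terms land with the right signs, and the leftover $\langle x_n-x_{n-1},x_n^*-x_{n-1}^*\rangle$ should recombine with $\langle x_n-x_n^*,x_n^*-x_{n-1}^*\rangle$ to yield exactly $\langle x_{n-1}-x_{n-1}^*,x_n^*-x_{n-1}^*\rangle$, using once more $x_n-x_{n-1}=(x_n-x_n^*)-(x_{n-1}-x_{n-1}^*)+(x_n^*-x_{n-1}^*)$ and the fact that $\langle x_n^*-x_{n-1}^*,x_n^*-x_{n-1}^*\rangle=\gamma_n^*$. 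The second identity follows by the symmetric argument, decomposing $x_n-x_{n-1}$ in the second slot of $\langle x_{n-1}-x_{n-1}^*,\,x_n-x_{n-1}\rangle$ and pairing with the other residual; alternatively one notes that subtracting identity~1 from identity~2 is just the trivial identity $\langle x_n-x_n^*,x_n-x_{n-1}\rangle-\langle x_{n-1}-x_{n-1}^*,x_n-x_{n-1}\rangle=\|x_n-x_{n-1}\|^2-\langle x_n^*-x_{n-1}^*,x_n-x_{n-1}\rangle$, so once identity~1 is established, identity~2 reduces to checking that $\delta_n-\langle x_n^*-x_{n-1}^*,x_n-x_{n-1}\rangle$ matches the difference of the claimed right-hand sides, which again uses only the polarization identity.

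This is a routine computation with no real obstacle; the only thing requiring care is bookkeeping the signs and making sure every cross term is expressed consistently via the same decomposition so that the residual inner products collapse correctly. If I wanted to make the proof maximally transparent, I would prove first the auxiliary identity $\langle x_n-x_n^*,x_n^*-x_{n-1}^*\rangle-\langle x_{n-1}-x_{n-1}^*,x_n^*-x_{n-1}^*\rangle=\gamma_n^*-\langle x_n-x_{n-1},x_n^*-x_{n-1}^*\rangle$ (immediate from the decomposition) and then feed it into both computations, so that the two stated equalities become mechanical consequences. The fact that $x_n^*=P_{X^*}(x_n)$ plays no role in the algebra here — these are pure Hilbert-space identities valid for arbitrary points — so no convexity or projection properties need to be invoked at this stage.
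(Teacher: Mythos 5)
Your proposal is correct and follows essentially the same route as the paper: both proofs are direct Hilbert-space expansions built on the telescoping decomposition $x_n-x_{n-1}=(x_n-x_n^*)+(x_n^*-x_{n-1}^*)+(x_{n-1}^*-x_{n-1})$, with no projection or convexity properties used, exactly as you observe. The only difference is bookkeeping: you handle the cross term $\langle x_n-x_n^*,x_{n-1}-x_{n-1}^*\rangle$ via the polarization identity, whereas the paper writes $x_n-x_n^*$ (resp. $x_{n-1}-x_{n-1}^*$) as one half of a telescoping sum of four vectors before expanding — a purely cosmetic distinction.
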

The second one encodes the fact that the sequence $(x_n)_{n\in\N}$ is provided by \eqref{eq:FISTA}. Its proof is based on a descent lemma proved in \cite{chambolle2015convergence} and is detailed in Section \ref{sec:proof_lemma_tech2}.
\begin{lemme}
\label{lem:tech2}
Let $(x_n)_{n\in\N}$ be the sequence provided by \eqref{eq:FISTA} with $s=\frac{1}{L}$. Then, for any $n\in\N^*$,
\begin{equation}
w_{n+1}-w_n\leqslant \alpha_n^2\delta_n-\delta_{n+1},
\end{equation}
and
\begin{equation}\begin{aligned}
w_{n+1}&\leqslant (1+\alpha_n)h_n+(\alpha_n^2+\alpha_n)\delta_n-\alpha_n h_{n-1}-h_{n+1}-\gamma_{n+1}^*-\alpha_n\gamma_n^*\\
&+2\alpha_n\langle x_{n-1}-x_{n-1}^*,x_n^*-x_{n-1}^*\rangle-2\langle x_{n+1}-x_{n+1}^*,x_{n+1}^*-x_n^*\rangle,
\end{aligned}
\end{equation}
where $\alpha_n=\frac{n}{n+\alpha}$.
\end{lemme}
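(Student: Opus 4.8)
The plan is to derive both inequalities from the classical descent estimate for a proximal-gradient step, as proved in \cite{chambolle2015convergence}. Writing $p(y)=\prox_{sh}\big(y-s\nabla f(y)\big)$ for the forward-backward operator, combining the $L$-smoothness and the convexity of $f$ with the subgradient inequality for $h$ at $p(y)$ gives, for every step size $s\leqslant\frac1L$ and every $z\in\mathcal{H}$,
\begin{equation*}
\frac{2}{L}\big(F(p(y))-F(z)\big)\leqslant\|y-z\|^2-\|p(y)-z\|^2.
\end{equation*}
Taking $y=y_n$, so that $p(y_n)=x_{n+1}$, this single estimate is all we need; the rest is bookkeeping relying on Lemma~\ref{lem:tech1} and the identity $y_n-x_n=\alpha_n(x_n-x_{n-1})$.

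For the first inequality I would apply the estimate with $z=x_n$. Then $\frac{2}{L}\big(F(x_{n+1})-F(x_n)\big)=w_{n+1}-w_n$, while $\|y_n-x_n\|^2=\alpha_n^2\|x_n-x_{n-1}\|^2=\alpha_n^2\delta_n$ and $\|x_{n+1}-x_n\|^2=\delta_{n+1}$, which yields $w_{n+1}-w_n\leqslant\alpha_n^2\delta_n-\delta_{n+1}$ immediately.

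For the second inequality I would instead apply the estimate with $z=x_n^*=P_{X^*}(x_n)\in X^*$, so that $F(z)=F^*$ and the left-hand side becomes $w_{n+1}$. On the right-hand side, expanding $y_n-x_n^*=(x_n-x_n^*)+\alpha_n(x_n-x_{n-1})$ gives $\|y_n-x_n^*\|^2=h_n+\alpha_n^2\delta_n+2\alpha_n\langle x_n-x_n^*,x_n-x_{n-1}\rangle$, and I substitute the first identity of Lemma~\ref{lem:tech1} to rewrite $2\alpha_n\langle x_n-x_n^*,x_n-x_{n-1}\rangle$ as $\alpha_n(h_n-h_{n-1}+\delta_n-\gamma_n^*)+2\alpha_n\langle x_{n-1}-x_{n-1}^*,x_n^*-x_{n-1}^*\rangle$. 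For the subtracted term I use the elementary decomposition $x_{n+1}-x_n^*=(x_{n+1}-x_{n+1}^*)+(x_{n+1}^*-x_n^*)$, which gives $\|x_{n+1}-x_n^*\|^2=h_{n+1}+\gamma_{n+1}^*+2\langle x_{n+1}-x_{n+1}^*,x_{n+1}^*-x_n^*\rangle$. Collecting all the contributions reproduces exactly the claimed bound.

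I do not expect a genuine obstacle here: the only external input is the Chambolle-Dossal descent lemma, and the remainder is careful algebra. The point that requires attention is that the projections $x_n^*$ and $x_{n+1}^*$ differ, so the cross terms $\langle x_{n-1}-x_{n-1}^*,x_n^*-x_{n-1}^*\rangle$ and $\langle x_{n+1}-x_{n+1}^*,x_{n+1}^*-x_n^*\rangle$ cannot be dropped at this stage; they are precisely what Lemma~\ref{lem:tech1} is designed to keep track of, and they will be controlled in the subsequent Lyapunov analysis. One should also note that, because the step size is exactly $s=\frac1L$, the extra negative term $-\|x_{n+1}-y_n\|^2$ that usually appears in the descent estimate is absorbed into the smoothness gap, which is why it does not show up above.
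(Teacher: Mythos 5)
Your proposal is correct and follows essentially the same route as the paper: both rely on the Chambolle--Dossal descent lemma applied once with $z=x_n$ (giving the first inequality) and once with $z=x_n^*$, then use the first identity of Lemma~\ref{lem:tech1} together with the decomposition $x_{n+1}-x_n^*=(x_{n+1}-x_{n+1}^*)+(x_{n+1}^*-x_n^*)$ to obtain the second. The bookkeeping you describe matches the paper's computation exactly.
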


We would like to point out that several controls can be deduced from the properties of the projection onto a convex. Indeed, if $C$ is a closed convex set such that $C\subset E$, then for any $x\in E$ and $y\in C$,
$$\langle x-p,y-p\rangle\leqslant0,$$
where $p$ denotes the projection of $x$ onto $C$. This property directly guarantees inequalities such as
$$\langle x_{n}-x_{n}^*,x_n^*-x_{n-1}^*\rangle\geqslant0~~\mbox{ and }~\langle x_{n-1}-x_{n-1}^*,x_n^*-x_{n-1}^*\rangle\leqslant0.$$

\subsection{Proof of Theorem \ref{thm:FISTA_flat}}\label{sec:proof_fistaflat}
\subsubsection{Sketch of the proof}

Recall that our analysis relies on the following discrete Lyapunov energy:
\begin{equation}
\mathcal{E}_n=\frac{2n^2}{L}(F(x_n)-F^*)+\left\|\lambda(x_{n}-x_{n}^*)+n\alpha_n(x_n-x_{n-1})\right\|^2+\xi\|x_n-x_n^*\|^2+\lambda n \alpha_n^2\|x_n-x_{n-1}\|^2,
\label{eq:Lyap_FISTA_flat}
\end{equation}
where $\lambda>0$ and $\xi<0$. Given the notations introduced in \eqref{eq:notations}, it can be rewritten:
\begin{equation}
\mathcal{E}_n=n^2w_n+b_n+\xi h_n+\lambda n\alpha_n^2\delta_n,
\end{equation}
where:\begin{equation}
\begin{array}{l}
w_n=\displaystyle\frac{2}{L}(F(x_n)-F^*),~h_n=\|x_n-x^*_n\|^2,~\delta_n=\|x_n-x_{n-1}\|^2,\\
\gamma_n^*=\|x_n^*-x_{n-1}^*\|^2,~\alpha_n=\displaystyle\frac{n}{n+\alpha},~b_n=\left\|\lambda(x_{n}-x_{n}^*)+n\alpha_n(x_n-x_{n-1})\right\|^2.
\end{array}
\end{equation}

The strategy underlying this proof is to show that this Lyapunov energy behaves asymptotically as $n^{-\frac{4}{\gamma-2}}$. Note that this does not directly guarantee the desired convergence results since $\xi<0$. The local growth condition $\mathcal{G}^\gamma$ satisfied by $F$ is necessary to reach the conclusion. 

In order to study the asymptotic behavior of $\mathcal{E}_n$, we define $\mathcal{J}_n=n^p\mathcal{E}_n$ where $p=1+\frac{4}{\gamma-2}$. The proof then follows several steps:
\begin{itemize}
\item Using the properties of FISTA and the convexity of $F$, we show that for a well-chosen set of parameters $(\alpha,\lambda,\xi)$ and $n$ sufficiently large:
\begin{equation}
\mathcal{J}_{n+1}-\mathcal{J}_n\leqslant A(n+1)^{p+1}w_{n+1}+B(n+1)^{p-1}h_{n+1},
\end{equation}
for some constants $A<0$ and $B>0$.
\item Given the previous inequality and the growth condition satisfied by $F$, we prove that for $n$ sufficiently large:
\begin{equation}
\mathcal{J}_n\leqslant C n,
\end{equation}
for some constant $C>0$. This inequality ensures that $\mathcal{E}_n$ decreases asymptotically as $n^{-\frac{4}{\gamma-2}}$.
\item By coming back to the definition of $\mathcal{J}_n$ and $\mathcal{E}_n$ and using the assumption $\mathcal{G}^\gamma$ satisfied by $F$, we show that $n^{p+1}w_n$ and $\alpha_n^2n^{p+1}\delta_n$ are bounded which leads to the desired results.
\end{itemize}

\subsubsection{A technical Lemma before the proof of Theorem \ref{thm:FISTA_flat}}
In the proof of Theorem~\ref{thm:FISTA_flat}, the geometry of the function $F$ will be useful to control the distance of the FISTA iterates to the set of minimizers by the decay of $F$ along the trajectory of iterates.
\begin{lemme}
\label{lem:geo}
Let $F$ satisfy $\mathcal{G}^\gamma_\text{loc}$ for some $\gamma>2$ and real constant $K>0$. If $p=1+\frac{4}{\gamma-2}$, then for $n$ sufficiently large,
\begin{equation}
n^{p-1}h_n\leqslant \left(\frac{L}{2K}\right)^\frac{2}{\gamma}\left(n^{p+1}w_n\right)^\frac{2}{\gamma},\label{eq:lemma_g_gamma}
\end{equation}
where: $w_n=\displaystyle\frac{2}{L}(F(x_n)-F^*)$ and $h_n=d(x_n,X^*)^2$.
\end{lemme}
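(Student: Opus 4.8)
The plan is to unwind the definitions and reduce the claimed inequality to the growth condition $\mathcal{G}^\gamma_\text{loc}$. Recall that $h_n = d(x_n, X^*)^2$ and $w_n = \frac{2}{L}(F(x_n)-F^*)$. The key structural fact we need first is that $d(x_n, X^*) \to 0$: since $F \in \mathcal{C}$ is coercive and FISTA guarantees $F(x_n) - F^* = \mathcal{O}(n^{-2}) \to 0$ (the classical rate \eqref{eq:cv_FISTA_cv}), Lemma~\ref{lem_cv_locale} applies and yields both that $d(x_n,X^*) \to 0$ and that there exists $N$ with $F(x_n)-F^* \geqslant K\, d(x_n,X^*)^\gamma$ for all $n \geqslant N$. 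That is precisely where the ``for $n$ sufficiently large'' qualifier in the statement comes from.

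Next I would rewrite the growth inequality in terms of $w_n$ and $h_n$. For $n \geqslant N$ we have $K\, d(x_n,X^*)^\gamma \leqslant F(x_n)-F^* = \frac{L}{2} w_n$, i.e. $K\, h_n^{\gamma/2} \leqslant \frac{L}{2} w_n$, hence $h_n \leqslant \left(\frac{L}{2K}\right)^{2/\gamma} w_n^{2/\gamma}$. It then remains to multiply both sides by the appropriate powers of $n$: multiplying by $n^{p-1}$ gives
\begin{equation*}
n^{p-1} h_n \leqslant \left(\frac{L}{2K}\right)^{2/\gamma} n^{p-1} w_n^{2/\gamma}.
\end{equation*}
To match the right-hand side of \eqref{eq:lemma_g_gamma}, observe that $\left(n^{p+1} w_n\right)^{2/\gamma} = n^{2(p+1)/\gamma} w_n^{2/\gamma}$, so the claimed bound holds as soon as $n^{p-1} \leqslant n^{2(p+1)/\gamma}$, i.e. (for $n \geqslant 1$) as soon as $p - 1 \leqslant \frac{2(p+1)}{\gamma}$. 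With $p = 1 + \frac{4}{\gamma-2}$ one computes $p - 1 = \frac{4}{\gamma-2}$ and $p + 1 = \frac{2\gamma}{\gamma-2}$, so $\frac{2(p+1)}{\gamma} = \frac{4}{\gamma-2}$; the two exponents are in fact equal, so the inequality between the powers of $n$ holds with equality and the bound follows directly.

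So the proof is essentially a bookkeeping exercise: the only genuine input is Lemma~\ref{lem_cv_locale} (to activate the local growth condition globally in $n$), and the particular value $p = 1 + \frac{4}{\gamma-2}$ is exactly what makes the exponents on $n$ balance. There is no real obstacle here; the one point deserving care is making explicit that $d(x_n, X^*) \leqslant \varepsilon$ eventually, so that $\mathcal{G}^\gamma_\text{loc}$ can be invoked — which is guaranteed by $d(x_n,X^*)\to 0$ from Lemma~\ref{lem_cv_locale} together with the convex-setting rate on $F(x_n)-F^*$.
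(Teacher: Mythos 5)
Your proof is correct and follows essentially the same route as the paper: invoke the classical $\mathcal{O}(n^{-2})$ rate to get $F(x_n)-F^*\to 0$, apply Lemma~\ref{lem_cv_locale} (using coercivity) to activate the local growth condition for $n\geqslant N$, rewrite it as $h_n\leqslant\left(\frac{L}{2K}\right)^{2/\gamma}w_n^{2/\gamma}$, and balance the powers of $n$ via the choice $p=1+\frac{4}{\gamma-2}$. Your explicit verification that $p-1=\frac{2(p+1)}{\gamma}$ simply spells out the exponent bookkeeping the paper leaves implicit.
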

\noindent\textbf{Proof.} %
Assume that $F$ satisfies some local Hölderian growth condition $\mathcal{G}^\gamma_\text{loc}$ for some $\gamma>0$. It is well known (see \cite{su2014differential,chambolle2015convergence}) that the iterates of FISTA with $s=\frac{1}{L}$ and $\alpha\geqslant3$ satisfy the following inequality
\begin{equation*}
\forall n\in\N,~F(x_n)-F^*\leqslant \frac{(\alpha-1)^2L}{2(n+\alpha-2)^2}\|x_0-x^*\|^2,
\end{equation*}
which implies that the sequence $\left(F(x_n)-F^*\right)_{n\in\N}$ converges to $0$. Applying Lemma \ref{lem_cv_locale}, we thus deduce that the sequence $\left(d(x_n,X^*)\right)_{n\in\N}$ converges to $0$ as $n\rightarrow+\infty$ and that there exist $K>0$ and $N\in \N$ such that:
\begin{equation}
    \forall n\geqslant N,~Kd(x_n,X^*)^\gamma\leqslant F(x_n)-F^*.\label{growth:cd}
\end{equation}
or, equivalently:
\begin{equation*}
\forall n\geqslant N,~h_n\leqslant \left(\frac{L}{2K}\right)^\frac{2}{\gamma}w_n^\frac{2}{\gamma}.
\end{equation*}
Choosing $p=1+\frac{4}{\gamma-2}$, the expected inequality \eqref{eq:lemma_g_gamma} holds for any $n\geqslant N$.\hfill\qed\\
\off{\color{blue}To do this, we assume on the contrary that the sequence $\left(d(x_n,X^*)\right)_{n\in\N}$ does not converge to $0$. Thus, there exist $\varepsilon>0$ and a non-increasing function $\phi:\N\rightarrow\N$ such that the sub-sequence $(x_{\phi(n)})_{n\in\N}$ satisfies for all $n\in\N$,
\begin{equation*}
    d(x_{\phi(n)},X^*)\geqslant \varepsilon.
\end{equation*}
It is well known (see \cite{su2014differential,chambolle2015convergence}) that the iterates of FISTA with $s=\frac{1}{L}$ and $\alpha\geqslant3$ satisfy the following inequality
\begin{equation*}
\forall n\in\N,~F(x_n)-F^*\leqslant \frac{(\alpha-1)^2L}{2(n+\alpha-2)^2}\|x_0-x^*\|^2,
\end{equation*}
which implies that the sequence $\left(F(x_n)-F^*\right)_{x\in\N}$ is bounded. By combining this property with the coercivity of $F$, we can deduce that the sequence $(x_n)_{n\in\N}$ is bounded. Hence, there exists a compact set $C$ containing $X^*$ such that 
\begin{equation}
    \left\{x_n,~n\in\N\right\}\subset C.
\end{equation}
Let $K_\varepsilon=C\cap\left\{c\in\mathcal{H},~d(x,X^*)\geqslant\varepsilon\right\}$. By construction, $K_\varepsilon$ is a compact subset of $\mathcal{H}$ and $K_\varepsilon\cap X^*=\emptyset$. Moreover, for all $n\in\N$, we have $x_{\phi(n)}\in K_\varepsilon$ so there exists a convergent sub-sequence $\left(x_{\psi\circ\phi(n)}\right)_{n\in\N}$ whose limit denoted by $\tilde x$ belongs to $K_\varepsilon$ and thus $\tilde x\notin X^*$.

Consequently, the sequence $\left(x_{\psi\circ\phi(n)}\right)_{n\in\N}$ converges towards $F(\tilde x)-F^*$. Since the whole sequence $\left(F(x_n)-F^*\right)_{x\in\N}$ tends to $0$ when $n\rightarrow +\infty$, it means that $F(\tilde x)-F^*=0$ which is impossible since $\tilde x\notin X^*$.

Thus, the sequence $\left(d(x_n,X^*)\right)_{n\in\N}$ converges to $0$ as $n\rightarrow+\infty$ and the lemma can be proved.}
\subsubsection{Proof of Theorem~\ref{thm:FISTA_flat}}
Let $(x_n)_{n\in\N}$ be the sequence provided by \eqref{eq:FISTA} and $\left(\mathcal{E}_n\right)_{n\in\N}$ be the Lyapunov energy defined in \eqref{eq:Lyap_FISTA_flat}. The first step of the proof is to get an upper bound on $\mathcal{E}_{n+1}-\mathcal{E}_n$. We provide such an inequality in the following lemma which is proved in Section \ref{sec:proof_tech_flat1}.
\begin{lemme}
\label{lem:ineq_flat1}
Let $\xi=\lambda(\lambda+1-\alpha)$. For any $n\in\N^*$,
\begin{equation}
\begin{aligned}
\mathcal{E}_{n+1}-\mathcal{E}_n\leqslant &\left((2-\lambda)n+1\right)w_{n+1}+B_1(n)b_{n+1}+B_2(n)h_{n+1}+B_3(n)\delta_{n+1}\\
&-B_4(n)\left(\gamma_{n+1}^*-2\langle x_n-x_n^*,x_{n+1}^*-x_n^*\rangle\right),
\end{aligned}
\end{equation}
where :
\begin{itemize}
\item$B_1(n)=\frac{2(\lambda+1-\alpha)}{n+1}+\frac{\alpha(2\lambda+2-\alpha)}{(n+1)^2}$, 
\item$B_2(n)=-\frac{2\lambda^2(\lambda+1-\alpha)}{n+1}+\frac{\alpha\lambda^2(\alpha-2\lambda-2)}{(n+1)^2}$, 
\item$B_3(n)=\alpha(\lambda+2)-(2\lambda^2+2\lambda+1)+\alpha^2\frac{n(\lambda-2)+\lambda-2-2\alpha}{(n+1-\alpha)^2}$ ,
\item$B_4(n)=-2\lambda(\lambda+1-\alpha)-\frac{\alpha^2\lambda}{n+1+\alpha}$.
\end{itemize}
\end{lemme}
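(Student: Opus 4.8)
The plan is to prove Lemma~\ref{lem:ineq_flat1} by a direct (if tedious) expansion of the discrete Lyapunov energy $\mathcal{E}_n = n^2 w_n + b_n + \xi h_n + \lambda n \alpha_n^2 \delta_n$, term by term, and then collecting coefficients. First I would write $\mathcal{E}_{n+1}-\mathcal{E}_n$ as the sum of four increments:
\[
\mathcal{E}_{n+1}-\mathcal{E}_n = \big((n+1)^2 w_{n+1} - n^2 w_n\big) + (b_{n+1}-b_n) + \xi(h_{n+1}-h_n) + \lambda\big((n+1)\alpha_{n+1}^2\delta_{n+1} - n\alpha_n^2\delta_n\big),
\]
and handle each piece. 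For the first piece, I would use the descent-type inequality from Lemma~\ref{lem:tech2}, namely $w_{n+1}-w_n \leqslant \alpha_n^2\delta_n - \delta_{n+1}$, to replace $n^2 w_n$ by $n^2 w_{n+1} + n^2(\delta_{n+1}-\alpha_n^2\delta_n)$, turning the $w_n$ term into a $w_{n+1}$ term plus $\delta$-corrections; I would also invoke the second, stronger inequality of Lemma~\ref{lem:tech2} to trade a multiple of $w_{n+1}$ against $h_n$, $h_{n-1}$, $h_{n+1}$, $\delta_n$, $\gamma_n^*$, $\gamma_{n+1}^*$ and the two cross terms $\langle x_{n-1}-x_{n-1}^*, x_n^*-x_{n-1}^*\rangle$ and $\langle x_{n+1}-x_{n+1}^*, x_{n+1}^*-x_n^*\rangle$.

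Next I would expand $b_{n+1}-b_n$ where $b_n = \|\lambda(x_n-x_n^*) + n\alpha_n(x_n-x_{n-1})\|^2 = \lambda^2 h_n + n^2\alpha_n^2\delta_n + 2\lambda n\alpha_n \langle x_n-x_n^*, x_n-x_{n-1}\rangle$. The cross term is exactly the quantity computed in item~1 of Lemma~\ref{lem:tech1}, so substituting that identity rewrites $\langle x_n-x_n^*, x_n-x_{n-1}\rangle$ in terms of $h_n - h_{n-1} + \delta_n - \gamma_n^*$ and $\langle x_{n-1}-x_{n-1}^*, x_n^*-x_{n-1}^*\rangle$. After doing the same at index $n+1$ via item~2 (or item~1 again, whichever matches the form $n\alpha_n$ versus the convention used), the differences $b_{n+1}-b_n$ become an explicit combination of $h_{n+1}, h_n, h_{n-1}, \delta_{n+1}, \delta_n, \gamma_{n+1}^*, \gamma_n^*$ and cross terms, with coefficients that are rational functions of $n$ through $\alpha_n = n/(n+\alpha)$ and $\alpha_{n+1}$. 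The $\xi(h_{n+1}-h_n)$ and $\lambda((n+1)\alpha_{n+1}^2\delta_{n+1}-n\alpha_n^2\delta_n)$ pieces are straightforward. I would then add everything, choose $\xi = \lambda(\lambda+1-\alpha)$ as dictated by the statement, and verify that after this choice the coefficients of $h_n$, $h_{n-1}$, $\delta_n$ and the cross term $\langle x_{n-1}-x_{n-1}^*, x_n^*-x_{n-1}^*\rangle$ all cancel (these are the "telescoping" cancellations the whole energy was designed to produce), leaving only the terms at index $n+1$: $w_{n+1}$ with coefficient $(2-\lambda)n+1$, $b_{n+1}$, $h_{n+1}$, $\delta_{n+1}$, and $\gamma_{n+1}^* - 2\langle x_n-x_n^*, x_{n+1}^*-x_n^*\rangle$ — the last grouping being natural because $\|x_{n+1}^*-x_n^*\|^2 - 2\langle x_n - x_n^*, x_{n+1}^*-x_n^*\rangle$ arises when completing a square in $\|x_{n+1}-x_{n+1}^*\|$ versus $\|x_n - x_n^*\|$ along the projection. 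Finally I would read off $B_1(n),\dots,B_4(n)$ from the surviving coefficients; expressing everything over the denominators $(n+1)$, $(n+1)^2$, $(n+1-\alpha)^2$, $(n+1+\alpha)$ and simplifying gives precisely the four stated formulas.

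The main obstacle is bookkeeping rather than conceptual: keeping track of the index shifts (the cross term in $b_n$ carries $n\alpha_n$, but the energy also contains the separate $\lambda n\alpha_n^2\delta_n$ term, and $\alpha_{n+1}\neq\alpha_n$), and making sure every instance of $\langle x_{n-1}-x_{n-1}^*, x_n^*-x_{n-1}^*\rangle$ is eliminated — some copies come from Lemma~\ref{lem:tech2}, others from the expansion of $b_n$ via Lemma~\ref{lem:tech1}, and they must combine with coefficient zero once $\xi=\lambda(\lambda+1-\alpha)$. I expect the verification that the coefficient of $\delta_n$ reduces to the asserted $B_3(n)$ (with its $\alpha^2\frac{n(\lambda-2)+\lambda-2-2\alpha}{(n+1-\alpha)^2}$ piece) to be the most delicate algebraic step, since it mixes the $n^2\delta_{n+1}-n^2\alpha_n^2\delta_n$ correction from the first Lemma~\ref{lem:tech2} inequality, the $n^2\alpha_n^2$ part of $b_n$, and the $\lambda n\alpha_n^2\delta_n$ energy term. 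No inequality beyond Lemmas~\ref{lem:tech1} and~\ref{lem:tech2} and convexity is needed; in particular the growth condition $\mathcal{G}^\gamma_{\text{loc}}$ plays no role in this lemma and is deferred to the subsequent steps of the proof of Theorem~\ref{thm:FISTA_flat}.
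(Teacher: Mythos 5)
Your skeleton is the same as the paper's (expand $\mathcal{E}_{n+1}-\mathcal{E}_n$, rewrite the cross terms of $b_n$ and $b_{n+1}$ via Lemma \ref{lem:tech1}, use both inequalities of Lemma \ref{lem:tech2}, set $\xi=\lambda(\lambda+1-\alpha)$), but the cancellation pattern you announce is not what happens, and as written the verification would stall. The cancellations of $\delta_n$, $h_{n-1}$, $\gamma_n^*$ and $\langle x_{n-1}-x_{n-1}^*,x_n^*-x_{n-1}^*\rangle$ are not produced by the choice of $\xi$: they come from adding exactly $\lambda n$ times the second inequality of Lemma \ref{lem:tech2}, the same multiple that turns the coefficient $(2n+1)$ of $w_{n+1}$ into $(2-\lambda)n+1$. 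What then survives at index $n$ is a term $B_4(n)\,(h_n-h_{n+1})$; the choice of $\xi$ only fixes this coefficient to $B_4(n)=-2\lambda(\lambda+1-\alpha)-\frac{\alpha^2\lambda}{n+1+\alpha}$, which is not zero, so $h_n$ does \emph{not} cancel. Moreover, since you have expanded $b_{n+1}$ through Lemma \ref{lem:tech1}, a straight collection of coefficients can never produce the term $B_1(n)\,b_{n+1}$ appearing in the statement. The missing idea is the paper's final step: invert the expansion of $b_{n+1}$, i.e. write $h_n-h_{n+1}=-\frac{1}{\lambda(n+1)\alpha_{n+1}}b_{n+1}+\frac{\lambda}{(n+1)\alpha_{n+1}}h_{n+1}+\frac{(n+1)\alpha_{n+1}+\lambda}{\lambda}\delta_{n+1}-\gamma_{n+1}^*+2\langle x_n-x_n^*,x_{n+1}^*-x_n^*\rangle$, and substitute this into $B_4(n)(h_n-h_{n+1})$. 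This single substitution is what creates $B_1(n)b_{n+1}$ and $B_2(n)h_{n+1}$, contributes the remaining part of $B_3(n)$, and produces the group $-B_4(n)\bigl(\gamma_{n+1}^*-2\langle x_n-x_n^*,x_{n+1}^*-x_n^*\rangle\bigr)$; indeed $B_1(n)=-B_4(n)/(\lambda(n+1)\alpha_{n+1})$ and $B_2(n)=-\lambda^2B_1(n)$, a structure that "reading off the surviving coefficients" cannot explain.

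A second gap: the lemma is not an identity-level rearrangement. Before the substitution above there also remains the group $\mathcal{X}_n^*=-\lambda\bigl((n+1)\alpha_{n+1}+n\bigr)\gamma_{n+1}^*+2\lambda(n+1)\alpha_{n+1}\langle x_n-x_n^*,x_{n+1}^*-x_n^*\rangle-2\lambda n\langle x_{n+1}-x_{n+1}^*,x_{n+1}^*-x_n^*\rangle$, which must be discarded using the obtuse-angle property of the projection onto the closed convex set $X^*$, namely $\langle x_n-x_n^*,x_{n+1}^*-x_n^*\rangle\leqslant0$ and $\langle x_{n+1}-x_{n+1}^*,x_{n+1}^*-x_n^*\rangle\geqslant0$. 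In particular the inner product $\langle x_{n+1}-x_{n+1}^*,x_{n+1}^*-x_n^*\rangle$ does not occur in the statement at all, so it cannot "cancel"; it is removed by a sign argument that your plan (which asserts that nothing beyond Lemmas \ref{lem:tech1}, \ref{lem:tech2} and convexity of $F$ is needed) never invokes. With these repairs --- the $\lambda n$-weighted use of the second inequality of Lemma \ref{lem:tech2}, the sign argument for $\mathcal{X}_n^*$, and above all the back-substitution eliminating $h_n$ in favour of $b_{n+1}$ --- your computation becomes the paper's proof; without them it does not reach the stated inequality.
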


We introduce $\mathcal{J}_n=n^p\mathcal{E}_n$ with $p=1+\frac{4}{\gamma-2}$. The next step is to show the following inequality.

\begin{lemme}
\label{lem:ineq_flat2}
Let $\xi=\lambda(\lambda+1-\alpha)$. If $\lambda\leqslant \alpha-1$, then for any $n\in\N^*$,
\begin{equation}
\begin{aligned}
\mathcal{J}_{n+1}-\mathcal{J}_n\leqslant&\left((2-\lambda+p)(n+1)^{p+1}+R_1(n)\right)w_{n+1}\\&+\left((2(\lambda+1-\alpha)+p)(n+1)^{p-1}+R_2(n)\right)b_{n+1}\\&+\left(\lambda(\lambda+1-\alpha)(p-2\lambda)(n+1)^{p-1}+R_3(n)\right)h_{n+1}\\&-n^p B_4(n)\left(\gamma_{n+1}^*-2\langle x_n-x_n^*,x_{n+1}^*-x_n^*\rangle\right),
\end{aligned}
\end{equation}
where:
\begin{eqnarray*}
R_1(n)&=&\left(\lambda-1+p(\lambda-2)\right)n^p+p(\lambda-2)n^{p-1}\\
R_2(n)&=&\left((4\lambda+6+2p-\alpha)\alpha+2\lambda p\right)(n+1)^{p-2}+\alpha^2(6\lambda+8+p)(n+1)^{p-3}\\
&&+2\alpha^3(\lambda+2)(n+1)^{p-4}\\
R_3(n)&=&\left(\lambda^2(\alpha^2+2\alpha+4\lambda p+p+1)+\lambda(\alpha-1)(p-1)\right)(n+1)^{p-2}\\
&&+\lambda^2\alpha\left(2p\lambda+2p+6\lambda\alpha+2\alpha\right)(n+1)^{p-3}+2\alpha^3\lambda^2(\lambda+2)(n+1)^{p-4}.
\end{eqnarray*}
\end{lemme}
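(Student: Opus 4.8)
The plan is to start from the inequality of Lemma~\ref{lem:ineq_flat1} and multiply through by the weight $n^p$, rewriting the difference $\mathcal{J}_{n+1}-\mathcal{J}_n$ in a way that isolates the leading-order coefficients in $n$. The natural algebraic identity to exploit is
\begin{equation*}
\mathcal{J}_{n+1}-\mathcal{J}_n=(n+1)^p\left(\mathcal{E}_{n+1}-\mathcal{E}_n\right)+\left((n+1)^p-n^p\right)\mathcal{E}_n,
\end{equation*}
so that the first term is controlled by $(n+1)^p$ times the bound from Lemma~\ref{lem:ineq_flat1}, while the second term requires expanding $\mathcal{E}_n=n^2w_n+b_n+\xi h_n+\lambda n\alpha_n^2\delta_n$ and absorbing each piece into the $w_{n+1}$, $b_{n+1}$, $h_{n+1}$ terms. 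For the latter I would use the descent-type estimates of Lemma~\ref{lem:tech2}, in particular $w_{n+1}-w_n\leqslant\alpha_n^2\delta_n-\delta_{n+1}$, to trade the index-$n$ quantities for index-$(n+1)$ quantities (note $\delta_{n+1}$ reappears with a favorable sign, and $\delta_n$-terms can be bounded since $\alpha_n<1$), together with the second inequality of Lemma~\ref{lem:tech2} to rewrite $w_{n+1}$ against $h_n,h_{n-1},h_{n+1},\delta_n,\gamma_n^*,\gamma_{n+1}^*$ and the two inner-product terms. The cross terms $\langle x_{n-1}-x_{n-1}^*,x_n^*-x_{n-1}^*\rangle\leqslant 0$ and $\langle x_{n+1}-x_{n+1}^*,x_{n+1}^*-x_n^*\rangle\geqslant 0$ coming from the projection property (stated just before Section~\ref{sec:proof_fistaflat}) can simply be discarded in the right direction.

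The second step is bookkeeping: I would collect, for each of the three "leading" monomials $w_{n+1}$, $b_{n+1}$, $h_{n+1}$, the exact coefficient of the top power of $n$. For $w_{n+1}$: from $(n+1)^p$ times $((2-\lambda)n+1)$ one gets $(2-\lambda)(n+1)^{p+1}$ plus lower order; from $((n+1)^p-n^p)\,n^2 w_n\approx p\,n^{p+1}w_n$ one gets the extra $+p$; rewriting $w_n$ as $w_{n+1}$ plus error via Lemma~\ref{lem:tech2} contributes a term of order $n^p$ times $(\lambda-2)$-type constants, which is exactly the shape of $R_1(n)$. Similarly $b_{n+1}$ picks up $(n+1)^p B_1(n)=2(\lambda+1-\alpha)(n+1)^{p-1}+O(n^{p-2})$ plus the $p(n+1)^{p-1}$ coming from the $((n+1)^p-n^p)b_n$ contribution, giving the leading coefficient $2(\lambda+1-\alpha)+p$, with all $O(n^{p-2}),O(n^{p-3}),O(n^{p-4})$ remainders swept into $R_2(n)$. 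The $h_{n+1}$ coefficient combines $(n+1)^p B_2(n)$ with the contribution $p\cdot\xi\cdot n^{p-1}h_n$ from differentiating the $\xi h_n$ term (here $\xi=\lambda(\lambda+1-\alpha)$), yielding $\lambda(\lambda+1-\alpha)(p-2\lambda)(n+1)^{p-1}$ after matching $h_n\leftrightarrow h_{n+1}$ modulo $\gamma_{n+1}^*$ and lower-order pieces collected in $R_3(n)$. Finally the $\delta_{n+1}$-terms and the $B_4$-term must be checked to close up correctly — the constraint $\lambda\leqslant\alpha-1$ is what makes $B_4(n)\geqslant 0$ (since $-2\lambda(\lambda+1-\alpha)\geqslant 0$), so $-n^pB_4(n)(\gamma_{n+1}^*-2\langle\cdot,\cdot\rangle)$ keeps the claimed sign after using the projection inequality $\langle x_n-x_n^*,x_{n+1}^*-x_n^*\rangle\geqslant 0$, and the $\lambda n\alpha_n^2\delta_n$ piece of $\mathcal{E}_n$ must be handled so that no stray positive $\delta$ term survives.

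The main obstacle is the purely computational one of controlling the subleading powers: every manipulation (multiplying by $(n+1)^p$, using $(n+1)^p-n^p=pn^{p-1}+O(n^{p-2})$, converting index $n$ to $n+1$) generates a cascade of lower-order terms, and the content of the lemma is precisely that all of these can be gathered into the explicitly written $R_1,R_2,R_3$ and the $B_4$-term without any unaccounted positive contribution at order $n^{p+1}$, $n^{p-1}$, $n^{p-1}$ respectively. I would organize this by expanding $(n+1)^{p-k}$ only to the order needed, tracking $\alpha_n=\frac{n}{n+\alpha}=1-\frac{\alpha}{n+\alpha}$ and $n\alpha_n^2$ carefully, and verifying at the end that the residual $\delta$-coefficients are $\leqslant 0$ for $n$ large (they are not needed in the statement, so they can simply be dropped once shown nonpositive, or — more likely, given the statement retains no $\delta_{n+1}$ term — absorbed exactly). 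No conceptual difficulty is expected beyond the discipline of the expansion; the sign condition $\lambda\leqslant\alpha-1$ and the earlier projection inequalities are the only structural inputs.
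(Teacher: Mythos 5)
Your opening identity is not the one the paper uses, and the difference is not cosmetic: you attach the discrete-derivative weight to $\mathcal{E}_n$, writing $\mathcal{J}_{n+1}-\mathcal{J}_n=(n+1)^p(\mathcal{E}_{n+1}-\mathcal{E}_n)+\left((n+1)^p-n^p\right)\mathcal{E}_n$, whereas the paper writes $\mathcal{J}_{n+1}-\mathcal{J}_n=n^p(\mathcal{E}_{n+1}-\mathcal{E}_n)+\left((n+1)^p-n^p\right)\mathcal{E}_{n+1}$. With your splitting the positive factor $(n+1)^p-n^p$ multiplies the index-$n$ quantities $n^2w_n+b_n+\xi h_n+\lambda n\alpha_n^2\delta_n$, and your plan to ``trade index-$n$ for index-$(n+1)$'' via Lemma \ref{lem:tech2} fails on the dominant term: the first claim of Lemma \ref{lem:tech2} reads $w_{n+1}-w_n\leqslant\alpha_n^2\delta_n-\delta_{n+1}$, i.e.\ it only gives the \emph{lower} bound $w_n\geqslant w_{n+1}+\delta_{n+1}-\alpha_n^2\delta_n$, which is the wrong direction for upper-bounding the positive term $\left((n+1)^p-n^p\right)n^2w_n$ by a multiple of $(n+1)^{p+1}w_{n+1}$; no inequality of the form $w_n\leqslant w_{n+1}+\dots$ is available (FISTA is not a descent method), and nothing in the paper controls $b_n$ by $b_{n+1}$ or $\delta_n$ by index-$(n+1)$ quantities either. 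With the paper's splitting this obstruction disappears, since $\mathcal{E}_{n+1}$ already expands in $w_{n+1},b_{n+1},h_{n+1},\delta_{n+1}$: one bounds $(n+1)^p-n^p\leqslant p(n+1)^{p-1}$, uses $\xi=\lambda(\lambda+1-\alpha)\leqslant0$ --- this is where the hypothesis $\lambda\leqslant\alpha-1$ actually enters, not through a sign of $B_4(n)$, whose term is simply carried along unchanged in the statement --- and multiplies the bound of Lemma \ref{lem:ineq_flat1} by $n^p$.

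The second gap is your treatment of the $\delta_{n+1}$ terms. The lemma is claimed for every $n\in\N^*$ and its right-hand side contains no $\delta_{n+1}$, not because the residual $\delta$-coefficient is eventually nonpositive (it need not be), but because it is absorbed exactly: applying $\|u\|^2\leqslant2\|u+v\|^2+2\|v\|^2$ with $u=\alpha_{n+1}(x_{n+1}-x_n)$ and $v=\lambda(x_{n+1}-x_{n+1}^*)$ gives $\delta_{n+1}\leqslant\frac{2}{(n+1)^2\alpha_{n+1}^2}b_{n+1}+\frac{2\lambda^2}{(n+1)^2\alpha_{n+1}^2}h_{n+1}$, and feeding the $\delta_{n+1}$-coefficient (through $|B_3(n)|$) into this bound is precisely what generates the $(n+1)^{p-2}$, $(n+1)^{p-3}$, $(n+1)^{p-4}$ tails of $R_2(n)$ and $R_3(n)$. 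Your alternative of dropping the $\delta$-terms ``once shown nonpositive for $n$ large'' would at best yield an asymptotic statement and would not reproduce the stated remainders. Likewise, invoking the second inequality of Lemma \ref{lem:tech2} to rewrite $w_{n+1}$ would reintroduce $h_{n-1}$, $h_n$, $\gamma_n^*$, which do not appear in the statement; that inequality belongs to the proof of Lemma \ref{lem:ineq_flat1}, not here. The repair is to switch to the paper's decomposition, after which your bookkeeping strategy does go through.
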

The proof is detailed in Section \ref{sec:proof_tech_flat2}. By setting $\lambda=\alpha-1-p$, we get that $\lambda\leqslant \alpha-1$ and:
\begin{equation}
\begin{array}{rcl}
C_1&:=&2-\lambda+p=3+2p-\alpha,\\
C_2&:=&2(\lambda+1-\alpha)+p=-p,\\
C_3&:=&\lambda(\lambda+1-\alpha)(p-2\lambda)=p(\alpha-1-p)(2\alpha-2-3p),
\end{array}
\end{equation}
which implies that for $\alpha>3+2p=5+\frac{8}{\gamma-2}$,
\begin{equation}
C_1<0,\quad C_2<0,\quad C_3>0.
\end{equation}
Considering the order of $R_1(n)$, $R_2(n)$ and $R_3(n)$, this guarantees that for $n$ sufficiently large:
\begin{equation}
\left\{
\begin{gathered}
C_1(n+1)^{p+1}+R_1(n)<\frac{C_1}{2}(n+1)^{p+1},\\
C_2(n+1)^{p-1}+R_2(n)<0,\\
C_3(n+1)^{p-1}+R_3(n)<2C_3(n+1)^{p-1}.
\end{gathered}\right.
\end{equation}
In addition, for the choice $\lambda=\alpha-1-p$ we have that $B_4(n)=2p(\alpha-1-p)-\alpha^2\frac{\alpha-1-p}{n+1+\alpha}$ which is positive for $\alpha>5+\frac{8}{\gamma-2}$ and $n$ sufficiently large. As $\gamma_{n+1}^*\geqslant 0$ and $\langle x_n-x_n^*,x_{n+1}^*-x_n^*\rangle\leqslant0$, this ensures that 
\begin{equation}
n^p B_4(n)\left(\gamma_{n+1}^*-2\langle x_n-x_n^*,x_{n+1}^*-x_n^*\rangle\right)\geqslant 0.
\end{equation}
Hence, if $\alpha>5+\frac{8}{\gamma-2}$, then for $n$ sufficiently large:
\begin{equation}
\mathcal{J}_{n+1}-\mathcal{J}_n\leqslant\frac{C_1}{2}(n+1)^{p+1}w_{n+1}+2C_3(n+1)^{p-1}h_{n+1}.\label{ineq:cle:holder}
\end{equation}
\paragraph{\it $1^{st}$ step: Proving that $F(x_n)-F^*=\mathcal O\left(n^{-\frac{2\gamma}{\gamma-2}}\right)$.} To obtain bounds on the decay of $F$ along the FISTA iterates, we take advantage of the geometry of the function $F$ to minimize. Assuming that $F$ satisfies a local Hölderian growth condition, Lemma~\ref{lem:geo} combined with \eqref{ineq:cle:holder} ensure that for $n$ sufficiently large:
\begin{equation}
\mathcal{J}_{n+1}-\mathcal{J}_n\leqslant \frac{C_1}{2}(n+1)^{p+1}w_{n+1}+2C_3 \left(\frac{L}{2K}\right)^\frac{2}{\gamma}\left((n+1)^{p+1}w_{n+1} \right)^\frac{2}{\gamma},
\end{equation}
which ensures that there exists $M_0\in\R$ such that $\mathcal{J}_{n+1}-\mathcal{J}_n\leqslant M_0$.

Thus, there exists $n_0\in\N$ such that for all $n\geqslant n_0$, $\mathcal{J}_n\leqslant nM_0+M_1$ where $M_1=\mathcal{J}_{n_0}-n_0M_0$. Consequently, we get that for $n$ sufficiently large, $\mathcal{J}_n\leqslant 2nM_0$. Coming back to the definition of $\mathcal{J}$, this implies that:
\begin{equation}
n^{p-1}\left(n^2w_n+b_n+\xi h_n+\lambda n\alpha_n^2\delta_n\right)\leqslant 2M_0.\label{eq:hub_flat}
\end{equation}

Noticing that $\xi=\lambda(\lambda+1-\alpha)<0$, this ensures that for $n$ sufficiently large:
\begin{equation}
n^{p+1}w_n-|\xi|n^{p-1}h_n\leqslant 2M_0,
\end{equation}
and according to Lemma \ref{lem:geo}:
\begin{equation}
n^{p+1}w_n-|\xi|\left(\frac{L}{2K}\right)^\frac{2}{\gamma}\left(n^{p+1}w_n\right)^\frac{2}{\gamma}\leqslant 2M_0.
\end{equation}
The following lemma guarantees that for $n$ sufficiently large, $n^{p+1}w_n$ is bounded.
\begin{lemme}
\label{lem:control_v}
Let $x\in\R^+$, $\delta\in(0,1)$, $K_1>0$ and $K_2>0$. Then,
$$x^\delta(x^{1-\delta}-K_1)\leqslant K_2\quad \implies\quad x\leqslant \left(K_2^{1-\delta}+K_1\right)^{\frac{1}{1-\delta}}.$$
\end{lemme}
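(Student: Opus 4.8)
The plan is to treat the hypothesis $x^\delta(x^{1-\delta}-K_1)\leqslant K_2$ as a bound of the form $x - K_1 x^\delta \leqslant K_2$, i.e. $x \leqslant K_2 + K_1 x^\delta$, and then to bootstrap. Write $y=x^\delta$ if one wishes, but it is cleaner to argue directly. First I would observe that either $x^{1-\delta}\leqslant K_1$, in which case $x\leqslant K_1^{1/(1-\delta)}$ and the conclusion is immediate (since $K_1^{1/(1-\delta)}\leqslant (K_2^{1-\delta}+K_1)^{1/(1-\delta)}$); or $x^{1-\delta}>K_1$. In the latter case the left-hand side $x^\delta(x^{1-\delta}-K_1)$ is positive, and I would like to conclude that $x^{1-\delta}-K_1$ is small.

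The key estimate is to use $x\geqslant x^\delta$... but that only holds when $x\geqslant1$, so some care is needed; instead the robust route is: from $x^\delta(x^{1-\delta}-K_1)\leqslant K_2$ and $x^\delta\geqslant (x^{1-\delta})^{\delta/(1-\delta)}$ — no, this is circular. The clean approach: set $a=x^{1-\delta}$. Then $x^\delta=a^{\delta/(1-\delta)}$ and the hypothesis reads $a^{\delta/(1-\delta)}(a-K_1)\leqslant K_2$. We want $a\leqslant K_2^{1-\delta}+K_1$, equivalently $a-K_1\leqslant K_2^{1-\delta}$. Suppose for contradiction that $a-K_1>K_2^{1-\delta}$; in particular $a>K_1$ and $a>K_2^{1-\delta}$, hence $a^{\delta/(1-\delta)}>K_2^{\delta}$ (using $x\mapsto x^{\delta/(1-\delta)}$ increasing when... this needs $\delta/(1-\delta)>0$, true since $\delta\in(0,1)$). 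Therefore $a^{\delta/(1-\delta)}(a-K_1)> K_2^{\delta}\cdot K_2^{1-\delta}=K_2$, contradicting the hypothesis. This gives $a-K_1\leqslant K_2^{1-\delta}$, i.e. $x^{1-\delta}\leqslant K_2^{1-\delta}+K_1$, and raising to the power $1/(1-\delta)$ yields the claim.

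So the steps in order are: (1) reduce to the case $x^{1-\delta}>K_1$, handling the easy case separately; (2) substitute $a=x^{1-\delta}$ so the hypothesis becomes $a^{\delta/(1-\delta)}(a-K_1)\leqslant K_2$; (3) argue by contradiction that $a-K_1\leqslant K_2^{1-\delta}$, using monotonicity of the power function; (4) raise to the power $\frac{1}{1-\delta}$. I do not expect any genuine obstacle here — the only subtlety is making sure the power-function monotonicity arguments are applied only to nonnegative quantities and with positive exponents, and remembering to dispatch the degenerate case $x=0$ (trivial) and $x^{1-\delta}\leqslant K_1$ at the outset so that all manipulations with $a-K_1$ are licit.

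\begin{proof}
If $x=0$ the conclusion is trivial, so assume $x>0$. If $x^{1-\delta}\leqslant K_1$, then $x\leqslant K_1^{\frac{1}{1-\delta}}\leqslant\left(K_2^{1-\delta}+K_1\right)^{\frac{1}{1-\delta}}$ and we are done. Assume now $x^{1-\delta}>K_1$ and set $a=x^{1-\delta}>0$, so that $x^\delta=a^{\frac{\delta}{1-\delta}}$ and the hypothesis reads
$$
a^{\frac{\delta}{1-\delta}}\left(a-K_1\right)\leqslant K_2,
$$
with $a-K_1>0$. Suppose, for contradiction, that $a-K_1>K_2^{1-\delta}$. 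Then $a>K_2^{1-\delta}$, and since $t\mapsto t^{\frac{\delta}{1-\delta}}$ is increasing on $\R^+$ (the exponent $\frac{\delta}{1-\delta}$ is positive because $\delta\in(0,1)$), we get $a^{\frac{\delta}{1-\delta}}>\left(K_2^{1-\delta}\right)^{\frac{\delta}{1-\delta}}=K_2^{\delta}$. Multiplying this with $a-K_1>K_2^{1-\delta}$ yields
$$
a^{\frac{\delta}{1-\delta}}\left(a-K_1\right)>K_2^{\delta}\cdot K_2^{1-\delta}=K_2,
$$
contradicting the hypothesis. Hence $a-K_1\leqslant K_2^{1-\delta}$, that is $x^{1-\delta}\leqslant K_2^{1-\delta}+K_1$, and raising to the power $\frac{1}{1-\delta}$ gives $x\leqslant\left(K_2^{1-\delta}+K_1\right)^{\frac{1}{1-\delta}}$.
\end{proof}
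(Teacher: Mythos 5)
Your proof is correct. The paper states Lemma~\ref{lem:control_v} without any proof (it is treated as an elementary calculus fact), so there is no argument of the authors to compare against; your case split (trivial case $x^{1-\delta}\leqslant K_1$, then the substitution $a=x^{1-\delta}$ and the contradiction via monotonicity of $t\mapsto t^{\delta/(1-\delta)}$, which gives $a^{\delta/(1-\delta)}(a-K_1)>K_2^{\delta}K_2^{1-\delta}=K_2$) is a complete and valid justification, and it in fact yields the slightly stronger intermediate bound $x^{1-\delta}\leqslant K_2^{1-\delta}+K_1$ from which the stated estimate follows by raising to the power $\frac{1}{1-\delta}$.
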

As a consequence, there exists $M_2>0$ such that for $n$ sufficiently large, $n^{p+1}w_n\leqslant M_2$ and considering the value of $p$ we have that:
\begin{equation}
F(x_n)-F^*\leqslant \frac{LM_2}{2n^\frac{2\gamma}{\gamma-2}}.
\end{equation}
This proves our first claim: $F(x_n)-F^*=\mathcal O\left(n^{-\frac{2\gamma}{\gamma-2}}\right)$.

\paragraph{\it $2^{nd}$ step: Proving that the trajectory of FISTA iterates has a finite length}
Let us come back to the inequality \eqref{eq:hub_flat} which implies that for $n$ sufficiently large:
\begin{equation}
n^{p-1}\left(n^2w_n+b_n-|\xi|h_n\right)\leqslant 2M_0.\label{eq:traj_ineq}
\end{equation}
By applying the inequality $\|u\|^2\leqslant 2\|u+v\|^2+2\|v\|^2$ to $u=\alpha_n(x_n-x_{n-1})$ and $v=\lambda(x_n-x_n^*)$, we get:
\begin{equation}
b_n\geqslant \frac{n^2\alpha_n^2}{2}\delta_n-\lambda^2h_n.
\end{equation}
Combining this inequality with \eqref{eq:traj_ineq} leads to:
\begin{equation}
\left(n^{p+1}w_n-(\lambda^2+|\xi|)n^{p-1}h_n\right)+\frac{\alpha_n^2}{2}n^{p+1}\delta_n\leqslant 2M_0.
\end{equation}
Then, Lemma \ref{lem:geo} gives us that
\begin{equation}
n^{p+1}w_n-(\lambda^2+|\xi|)n^{p-1}h_n\geqslant n^{p+1}w_n-\left(\frac{L}{2K}\right)^\frac{2}{\gamma}\left(n^{p+1}w_n\right)^\frac{2}{\gamma}.
\end{equation}
The study of the variations of $\varphi:x\mapsto x-\left(\frac{L}{2K}\right)^\frac{2}{\gamma} x^\frac{2}{\gamma}$ shows that there exists a real constant $M_3\in\R$ such that $\varphi$ is bounded from below by $M_3$. Hence for $n$ large enough:
$n^{p+1}w_n-(\lambda^2+|\xi|)n^{p-1}h_n\geqslant M_3,$ and:
\begin{equation}
\delta_n\leqslant \frac{4M_0-2M_3}{\alpha_n^2n^{p+1}},
\end{equation}
and therefore: $\|x_n-x_{n-1}\|=\mathcal{O}\left(n^{-\frac{\gamma}{\gamma-2}}\right)$.

\paragraph{\it $3^{rd}$ step: Proving that the FISTA iterates strongly converge to a minimizer of $F$.} The strong convergence of FISTA iterates can be deduced from the summability of $\|x_n-x_{n-1}\|$ since $\frac{\gamma}{\gamma-2}>1$ for any $\gamma>2$.

\subsection{Proof of Theorem \ref{thm:FISTA}}
\label{sec:proof_FISTA}

The proof of Theorem \ref{thm:FISTA} is an adaptation of the proof of \cite[Theorem~6]{aujol2023fista} without the assumption that $F$ has a unique minimizer. Its structure is similar despite the involvement of additional terms linked to the relaxed setting. The tricky technical aspect is to control these additional terms in order to recover inequalities obtained in the case of uniqueness of the minimizer.

Recall that we consider the discrete Lyapunov energy defined in \eqref{eq:Lyap_FISTA} with the notations \eqref{eq:notations}:
\begin{equation}
E_n=n^2w_n+\lambda^2h_{n-1}+n^2\delta_n+2\lambda n  \langle x_{n-1}-x_{n-1}^*,x_n-x_{n-1}\rangle,\label{lyapunov:th2}
\end{equation}
where $\alpha>3$, $\lambda=\frac{2\alpha}{3}$ and:
\begin{equation}
\begin{array}{l}
w_n=\displaystyle\frac{2}{L}(F(x_n)-F^*),~h_n=\|x_n-x^*_n\|^2,~\delta_n=\|x_n-x_{n-1}\|^2,~\alpha_n=\displaystyle\frac{n}{n+\alpha}.
\end{array}
\end{equation}
Applying the second claim of Lemma \ref{lem:tech1}, the Lyapunov energy \eqref{lyapunov:th2} can be rewritten as:
\begin{equation*}
E_n=n^2 w_n+\lambda n h_n+(\lambda^2-\lambda n)h_{n-1}+(n^2-\lambda n)\delta_n +\lambda n\gamma_n^*+ 2\lambda n\langle x_{n}-x_{n}^*,x_n^*-x_{n-1}^*\rangle.
\end{equation*}
For any $n\in\N^*$, we have:\small
\begin{equation*}
\begin{aligned}
E_{n+1}-\left(1-\frac{\lambda-2}{n}\right)E_n&=(n+1)^2w_{n+1}-\left(1-\frac{\lambda-2}{n}\right)n^2 w_n\\
&+\left((n+1)^2-\lambda(n+1)\right)\delta_{n+1}-\left(1-\frac{\lambda-2}{n}\right)(n^2-\lambda n)\delta_n\\
&+\left(\lambda^2-\lambda(n+1)-\lambda n\left(1-\frac{\lambda-2}{n}\right)\right)h_n+\lambda(n+1)h_{n+1}\\
&-(\lambda^2-\lambda n)\left(1-\frac{\lambda-2}{n}\right)h_{n-1}\\
&+\lambda(n+1)\gamma_{n+1}^*+2\lambda (n+1)\langle x_{n+1}-x_{n+1}^*,x_{n+1}^*-x_{n}^*\rangle\\&-\lambda (n-\lambda+2) \gamma_n^*-2\lambda (n-\lambda+2)\langle x_{n}-x_{n}^*,x_n^*-x_{n-1}^*\rangle.
\end{aligned}
\end{equation*}\normalsize
Elementary computations give that:
\begin{equation*}
(n+1)^2w_{n+1}-\left(1-\frac{\lambda-2}{n}\right)n^2w_n=n\left(n-\lambda+2\right)(w_{n+1}-w_n)+(\lambda n+1)w_{n+1}.
\end{equation*}
Consequently, Lemma \ref{lem:tech2} ensures that for all $n\in\N^*$:\small
\begin{equation*}
\begin{aligned}
&(n+1)^2w_{n+1}-\left(1-\frac{\lambda-2}{n}\right)n^2w_n\\
&\leqslant n\left(n-\lambda+2\right)\left(\alpha_n^2\delta_n-\delta_{n+1}\right)\\
&+(\lambda n+1)\left((1+\alpha_n)h_n+(\alpha_n^2+\alpha_n)\delta_n-\alpha_n h_{n-1}-h_{n+1}-\gamma_{n+1}^*-\alpha_n\gamma_n^*\right)\\
&+2(\lambda n+1)\left(\alpha_n\langle x_{n-1}-x_{n-1}^*,x_n^*-x_{n-1}^*\rangle- \langle x_{n+1}-x_{n+1}^*,x_{n+1}^*-x_n^*\rangle\right).
\end{aligned}
\end{equation*}\normalsize
It follows that:\small
\begin{equation}
\begin{aligned}
E_{n+1}-\left(1-\frac{\lambda-2}{n}\right)E_n&\leqslant A_1(n,\alpha)\delta_n+A_2(n,\alpha)\delta_{n+1}+B_1(n,\alpha)h_{n-1}\\
&+B_2(n,\alpha)h_n+B_3(n,\alpha)h_{n+1}+D_1(n,\alpha)\gamma_{n+1}^*\\
&+D_2(n,\alpha)\gamma_n^*+D_3(n,\alpha)\langle x_{n+1}-x_{n+1}^*,x_{n+1}^*-x_{n}^*\rangle\\
&+D_4(n,\alpha)\langle x_{n}-x_{n}^*,x_n^*-x_{n-1}^*\rangle\\&+D_5(n,\alpha)\langle x_{n-1}-x_{n-1}^*,x_n^*-x_{n-1}^*\rangle,
\end{aligned}
\end{equation}\normalsize
where:
\begin{itemize}\small
\item $A_1(n,\alpha)=\frac{17\alpha^2}{9}-\frac{8\alpha}{3}+2-\alpha\frac{(10\alpha^2-18\alpha+9)n+7\alpha^3-12\alpha^2+6\alpha}{3(n+\alpha)^2}$,
\item $A_2(n,\alpha)=1-\frac{2\alpha}{3}$,
\item $B_1(n,\alpha)=-\frac{2\alpha^2}{9}+\frac{4\alpha}{3}-1+\frac{3\alpha-2\alpha^3}{3(n+\alpha)}+\frac{8\alpha^3-24\alpha^2}{27n}$,
\item $B_2(n,\alpha)=\frac{2\alpha^2}{9}-2\alpha+2-\frac{3\alpha-2\alpha^3}{3(n+\alpha)}$,
\item $B_3(n,\alpha)=\frac{2\alpha}{3}-1$,
\item $D_1(n,\alpha)=\frac{2\alpha}{3}-1$,
\item $D_2(n,\alpha)=-\frac{4\alpha}{3}n-1-\frac{4\alpha}{3}+\frac{10\alpha^2}{9}+\frac{3\alpha-2\alpha^3}{3(n+\alpha)}$,
\item $D_3(n,\alpha)=\frac{4\alpha}{3}-2$,
\item $D_4(n,\alpha)=-\frac{4\alpha}{3}n-\frac{8\alpha}{3}+\frac{8\alpha^2}{9}$,
\item $D_5(n,\alpha)=\frac{4\alpha}{3}n+2-\frac{4\alpha^2}{3} +\frac{\alpha(4\alpha^2-6)}{3(n+\alpha)}$.
\end{itemize}\normalsize
Noticing that $B_3(n,\alpha)=-A_2(n,\alpha)=D_1(n,\alpha)=\frac{1}{2}D_3(n,\alpha)$ and:
\begin{equation*}
B_1(n,\alpha)+B_2(n,\alpha)+B_3(n,\alpha)=\frac{8\alpha^2}{27}\frac{\alpha-3}{n}=\frac{4\alpha K(\alpha)}{3n},
\end{equation*}
where $K(\alpha)=\frac{2\alpha(\alpha-3)}{9}$, we get that
\begin{equation}
\label{eq:checkpoint}
\begin{aligned}
E_{n+1}-\left(1-\frac{\lambda-2}{n}\right)E_n\leqslant&\frac{4\alpha K(\alpha)}{3n}h_n+A_1(n,\alpha)\delta_n+B_1(n,\alpha)(h_{n-1}-h_n)\\
&+B_3(n,\alpha)(h_{n+1}-h_n-\delta_{n+1})+B_3(n,\alpha)\gamma_{n+1}^*\\
&+D_2(n,\alpha)\gamma_n^*+2B_3(n,\alpha)\langle x_{n+1}-x_{n+1}^*,x_{n+1}^*-x_{n}^*\rangle\\
&+D_4(n,\alpha)\langle x_{n}-x_{n}^*,x_n^*-x_{n-1}^*\rangle\\&+D_5(n,\alpha)\langle x_{n-1}-x_{n-1}^*,x_n^*-x_{n-1}^*\rangle.
\end{aligned}
\end{equation}\normalsize

We apply the following technical lemma that is an extension of \cite[Lemma~4]{aujol2023fista}. The proof can be found in Section \ref{sec:proof_lemma_FISTA_AB}.
\begin{lemme}
\label{lem:lemma_FISTA_AB}
Let $n>\lambda$ and $(A,B)\in\R^2$. The following two claims hold:
\begin{enumerate}
    \item 
    \begin{equation}
    \delta_n\leqslant\frac{2}{(n-\lambda)^2}b_n+\frac{8\alpha^2}{9(n-\lambda)^2}h_n,
    \label{eq:ineq_delta_n}
    \end{equation}
    where $b_n = \|\lambda(x_{n-1}-x_{n-1}^*)+n(x_n-x_{n-1})\|^2$ for any $n\in\N^*$.
    \item 
    \begin{equation}\label{eq:AB}
    \begin{aligned}
    A\delta_n+B(h_{n-1}-h_n)\leqslant&\left(2|A+B|+\frac{\sqrt{2}|B|}{\sqrt{\kappa}}\right)\left(1+\frac{4\alpha^2}{9\kappa n^2}\right)\frac{E_n}{(n-\lambda)^2}\\&-B\gamma_n^*+2B\langle x_{n-1}-x_{n-1}^*,x_n^*-x_{n-1}^*\rangle.
    \end{aligned}
    \end{equation}
\end{enumerate}
\end{lemme}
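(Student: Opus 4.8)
\textbf{Proof plan for Lemma~\ref{lem:lemma_FISTA_AB}.}

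The plan is to prove the two claims separately, both by elementary manipulations starting from the definitions of $E_n$, $b_n$ and the notations in \eqref{eq:notations}. For the first claim, I would begin from the expression $b_n=\|\lambda(x_{n-1}-x_{n-1}^*)+n(x_n-x_{n-1})\|^2$ and expand it as $n^2\delta_n + 2\lambda n\langle x_{n-1}-x_{n-1}^*,x_n-x_{n-1}\rangle + \lambda^2 h_{n-1}$. Isolating $n^2\delta_n$ and using Young's inequality $2\lambda n\langle x_{n-1}-x_{n-1}^*,x_n-x_{n-1}\rangle \geqslant -\lambda\delta_n\cdot\text{(something)}$ is one route, but a cleaner path is to apply the inequality $\|u\|^2\leqslant 2\|u+v\|^2+2\|v\|^2$ with $u=n(x_n-x_{n-1})$ and $v=\lambda(x_{n-1}-x_{n-1}^*)$, which gives $n^2\delta_n\leqslant 2b_n+2\lambda^2 h_{n-1}$. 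Since $\lambda=\frac{2\alpha}{3}$ so that $2\lambda^2=\frac{8\alpha^2}{9}$, and $n-\lambda<n$, one obtains $\delta_n\leqslant\frac{2}{(n-\lambda)^2}b_n+\frac{8\alpha^2}{9(n-\lambda)^2}h_{n-1}$; the only subtlety is that the statement has $h_n$ rather than $h_{n-1}$, which I would need to reconcile — likely the intended bound uses $h_{n-1}$ and the rest of the argument is insensitive to which index appears, or one invokes a monotonicity/comparison between $h_n$ and $h_{n-1}$. I would double-check the index against how the lemma is used downstream.

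For the second claim, the key is to re-express $E_n$ in a form that exposes both $\delta_n$ and $h_{n-1}-h_n$. Using the second identity of Lemma~\ref{lem:tech1} to rewrite the cross term $2\lambda n\langle x_{n-1}-x_{n-1}^*,x_n-x_{n-1}\rangle = \lambda n(h_n-h_{n-1}-\delta_n+\gamma_n^*) + 2\lambda n\langle x_n-x_n^*,x_n^*-x_{n-1}^*\rangle$, I would substitute into \eqref{lyapunov:th2} to get $E_n = n^2w_n + \lambda^2 h_{n-1} + (n^2-\lambda n)\delta_n + \lambda n(h_n-h_{n-1}) + \lambda n\gamma_n^* + 2\lambda n\langle x_n-x_n^*,x_n^*-x_{n-1}^*\rangle$. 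The term $2\lambda n\langle x_n-x_n^*,x_n^*-x_{n-1}^*\rangle$ is nonnegative by the projection inequality, so $E_n$ controls $(n^2-\lambda n)\delta_n + \lambda n(h_n - h_{n-1}) + (\text{nonnegative terms in }w_n, h_{n-1}, \gamma_n^*)$ from below; the challenge is that $h_n-h_{n-1}$ can have either sign, so $E_n$ does not immediately bound $\delta_n$ alone. The trick (mirroring \cite[Lemma~4]{aujol2023fista}) is to write $A\delta_n + B(h_{n-1}-h_n)$ as a combination $(A+B)\delta_n + B(\delta_n + h_{n-1} - h_n)$ or similar, then bound $\delta_n$ by claim~1 (introducing $b_n$ and $h$), and bound the remaining $h_{n-1}-h_n$ type term using $b_n\geqslant$ a multiple of $n^2\delta_n$ minus a multiple of $h_{n-1}$ together with the quadratic growth condition $\frac{\mu}{2}h_n\leqslant F(x_n)-F^*$, i.e. $\kappa h_n\leqslant w_n$ with $\kappa=\mu/L$. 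This is where the $\frac{\sqrt 2|B|}{\sqrt\kappa}$ and $\frac{4\alpha^2}{9\kappa n^2}$ factors enter: the $h_n$ appearing after applying claim~1 is converted to $w_n$ via $h_n\leqslant w_n/\kappa$, and $w_n$ is part of $E_n/n^2$; collecting constants and bounding $n-\lambda$ against $n$ produces the stated coefficient $\left(2|A+B|+\frac{\sqrt2|B|}{\sqrt\kappa}\right)\left(1+\frac{4\alpha^2}{9\kappa n^2}\right)$.

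The main obstacle will be the second claim: carefully tracking the signs and the projection-inequality terms so that the leftover quantities $-B\gamma_n^*$ and $2B\langle x_{n-1}-x_{n-1}^*,x_n^*-x_{n-1}^*\rangle$ appear exactly, rather than being absorbed or dropped. In the uniqueness case these extra terms vanish identically, so the delicate point here is showing they can be carried along verbatim through the chain of inequalities — in particular that wherever Lemma~\ref{lem:tech1} is applied, the $\gamma_n^*$ and inner-product remainders combine in a controlled way. I would organize the computation so that claim~1 is proved first and cited inside claim~2, keep $\kappa$ and $\alpha$ symbolic throughout, and only at the very end use $n>\lambda$ to replace $(n-\lambda)^{-2}$-type quantities by the cleaner $E_n/(n-\lambda)^2$ form. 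The routine algebra (expanding squares, collecting powers of $n$) I would relegate to a direct verification; the conceptual content is entirely in the bookkeeping of the non-uniqueness correction terms.
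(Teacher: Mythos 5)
Your plan for the second claim is essentially the paper's argument: decompose $A\delta_n+B(h_{n-1}-h_n)=(A+B)\delta_n+B(h_{n-1}-h_n-\delta_n)$, note that the first identity of Lemma~\ref{lem:tech1}, rearranged, gives exactly $h_{n-1}-h_n-\delta_n=-2\langle x_n-x_{n-1},x_n-x_n^*\rangle+2\langle x_{n-1}-x_{n-1}^*,x_n^*-x_{n-1}^*\rangle-\gamma_n^*$ (so the remainder terms $-B\gamma_n^*$ and $2B\langle x_{n-1}-x_{n-1}^*,x_n^*-x_{n-1}^*\rangle$ appear verbatim in one line, which settles the bookkeeping you were worried about), bound the inner product by Young's inequality $2|\langle x_n-x_{n-1},x_n-x_n^*\rangle|\leqslant\theta\delta_n+h_n/\theta$ with $\theta=1/\sqrt{2\kappa}$, then absorb $\delta_n$ via claim~1 and $h_n$ via the quadratic growth $h_n\leqslant w_n/\kappa\leqslant E_n/(\kappa n^2)$ together with $b_n\leqslant E_n$. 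No bounding of $n-\lambda$ by $n$ is needed there.

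The genuine gap is in your proof of claim~1. Taking $u=n(x_n-x_{n-1})$, $v=\lambda(x_{n-1}-x_{n-1}^*)$ yields $\delta_n\leqslant\frac{2}{(n-\lambda)^2}b_n+\frac{8\alpha^2}{9(n-\lambda)^2}h_{n-1}$, which is a different inequality, and the index cannot be reconciled afterwards: $d(x_n,X^*)$ is not monotone along the iterates, and the whole point of having $h_n$ in \eqref{eq:ineq_delta_n} is that the quadratic growth turns it into $h_n\leqslant w_n/\kappa\leqslant E_n/(\kappa n^2)$, i.e.\ a term absorbed by $E_n$ — this is precisely how the inequality is used inside claim~2 and again in the proof of Theorem~\ref{thm:FISTA} to get $\delta_{n+1}\leqslant\frac{2}{(n+1-\lambda)^2}\bigl(1+\frac{4\alpha^2}{9\kappa(n+1)^2}\bigr)E_{n+1}$. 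With $h_{n-1}$ you would need to control $w_{n-1}$ by $E_n$, which is not available. The paper's fix is to recenter $b_n$ at the current iterate: write $\lambda(x_{n-1}-x_{n-1}^*)+n(x_n-x_{n-1})=\lambda(x_n-x_n^*)+(n-\lambda)(x_n-x_{n-1})+\lambda(x_n^*-x_{n-1}^*)$, expand the square, and observe that the extra terms
\begin{equation*}
\Delta_n^*=2\lambda^2\gamma_n^*+4\lambda^2\langle x_n-x_n^*,x_n^*-x_{n-1}^*\rangle+4\lambda(n-\lambda)\langle x_n-x_{n-1},x_n^*-x_{n-1}^*\rangle
\end{equation*}
are all nonnegative by the projection inequalities; then apply $\|u\|^2\leqslant2\|u+v\|^2+2\|v\|^2$ with $u=(n-\lambda)(x_n-x_{n-1})$ and $v=\lambda(x_n-x_n^*)$ to obtain $(n-\lambda)^2\delta_n\leqslant2b_n+\frac{8\alpha^2}{9}h_n-\Delta_n^*\leqslant2b_n+\frac{8\alpha^2}{9}h_n$, using $2\lambda^2=\frac{8\alpha^2}{9}$. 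In other words, the non-uniqueness correction terms you anticipated as the delicate point of claim~2 are in fact what must be handled (with the right sign) already in claim~1.
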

\noindent Inequality \eqref{eq:AB} ensures that for any $n>\lambda$:
\begin{equation*}
\begin{aligned}
\frac{4\alpha K(\alpha)}{3}\frac{h_n}{n}+A_1(n,\alpha)\delta_n&+B_1(n,\alpha)(h_{n-1}-h_n)\leqslant \frac{\hat C_1(n,\alpha,\kappa)E_n}{(n-\lambda)^2}\\&-B_1(n,\alpha)\gamma_n^*+2B_1(n,\alpha)\langle x_{n-1}-x_{n-1}^*,x_n^*-x_{n-1}^*\rangle,
\end{aligned}
\end{equation*}\normalsize
and
\begin{equation*}
\begin{aligned}
B_3(n,\alpha)(h_{n+1}-h_n-\delta_{n+1})\leqslant& \frac{\hat C_2(n,\alpha,\kappa)E_{n+1}}{(n+1-\lambda)^2}-2B_3(n,\alpha)\langle x_{n}-x_{n}^*,x_{n+1}^*-x_{n}^*\rangle\\&+B_3(n,\alpha)\gamma_{n+1}^*,
\end{aligned}
\end{equation*}\normalsize
where 
\begin{equation*}
\hat C_1(n,\alpha,\kappa)=2|\frac{5}{3}\alpha^2-\frac{4\alpha}{3}+1+R(n,\alpha)|+\sqrt{2}\left(\frac{|-\frac{2\alpha^2}{9}+\frac{4\alpha}{3}-1+Q(n,\alpha)|}{\sqrt{\kappa}}\right)\left(1+\frac{4\alpha^2}{9\kappa n^2}\right)+\frac{4\alpha K(\alpha)}{3\kappa n}
\end{equation*}\normalsize
with: 
\begin{equation*}
\begin{gathered}
|R(\alpha,n)|=\left|A_1(n,\alpha)+B_1(n,\alpha)-(\frac{5}{3}\alpha^2-\frac{4\alpha}{3}+1)\right|\leqslant \frac{8\alpha^3}{n}\\ 
|Q(\alpha,n)|=\frac{\alpha^3}{3n}\left|n\frac{3-2\alpha^2}{\alpha^2(n+\alpha)}+8\frac{\alpha-3}{9\alpha}\right| \leqslant \frac{\alpha^3}{n},
\end{gathered}
\end{equation*}
and $\hat C_2(n,\alpha,\kappa)=\left(\frac{2\alpha}{3}-1\right)\left(4+\frac{\sqrt{2}}{\sqrt{\kappa}}\right)\left(1+\frac{4\alpha^2}{9\kappa(n+1)^2}\right)$. Coming back to \eqref{eq:checkpoint}, we get that:
\begin{equation}\label{eq:huge_ineq}
\begin{aligned}
E_{n+1}-\left(1-\frac{\lambda-2}{n}\right)E_n\leqslant& \frac{\hat C_1(n,\alpha,\kappa)E_n}{(n-\lambda)^2}+\frac{\hat C_2(n,\alpha,\kappa)E_{n+1}}{(n+1-\lambda)^2}\\
&+2B_3(n,\alpha)\gamma_{n+1}^*+2B_3(n,\alpha)\langle x_{n+1}-x_{n+1}^*,x_{n+1}^*-x_{n}^*\rangle\\
&-2B_3(n,\alpha)\langle x_{n}-x_{n}^*,x_{n+1}^*-x_{n}^*\rangle+(D_2(n,\alpha)-B_1(n,\alpha))\gamma_n^*\\
&+D_4(n,\alpha)\langle x_{n}-x_{n}^*,x_n^*-x_{n-1}^*\rangle\\&+(D_5(n,\alpha)+2B_1(n,\alpha))\langle x_{n-1}-x_{n-1}^*,x_n^*-x_{n-1}^*\rangle.
\end{aligned}
\end{equation}
Note that for all $n\in\N^*$,
\begin{equation*}
\gamma_n^*+\langle x_n-x_n^*,x_n^*-x_{n-1}^*\rangle-\langle x_{n-1}-x_{n-1}^*,x_n^*-x_{n-1}^*\rangle=\langle x_n-x_{n-1},x_n^*-x_{n-1}^*\rangle,
\end{equation*}
and thus,
\begin{equation*}
2B_3(n,\alpha)\left(\gamma_{n+1}^*+\langle x_{n+1}-x_{n+1}^*,x_{n+1}^*-x_{n}^*\rangle
-\langle x_{n}-x_{n}^*,x_{n+1}^*-x_{n}^*\rangle\right)=2(\lambda-1)\langle x_{n+1}-x_n,x_{n+1}^*-x_{n}^*\rangle.
\end{equation*}
Moreover, we can show that for any $n\geqslant \lambda-2$,
\begin{equation}
D_4(n,\alpha)\leqslant D_2(n,\alpha)-B_1(n,\alpha)\leqslant-(D_5(n,\alpha)+2B_1(n,\alpha))\leqslant -2\lambda\left(n-2\left(\lambda-1\right)\right).
\end{equation}
Since
\begin{equation*}
\gamma_n^*\geqslant0,~\langle x_{n}-x_{n}^*,x_n^*-x_{n-1}^*\rangle\geqslant0,~
\langle x_{n-1}-x_{n-1}^*,x_n^*-x_{n-1}^*\rangle\leqslant0,
\end{equation*}
we get that
\begin{equation*}
\begin{gathered}
(D_2(n,\alpha)-B_1(n,\alpha))\gamma_n^*+D_4(n,\alpha)\langle x_{n}-x_{n}^*,x_n^*-x_{n-1}^*\rangle+(D_5(n,\alpha)+2B_1(n,\alpha))\langle x_{n-1}-x_{n-1}^*,x_n^*-x_{n-1}^*\rangle\\
\leqslant-2\lambda\left(n-2\left(\lambda-1\right)\right)\langle x_n-x_{n-1},x_n^*-x_{n-1}^*\rangle,
\end{gathered}
\end{equation*}
which is negative if $n\geqslant 2\lambda-2$. By taking $n>\max\left\{\lambda,2\lambda-2\right\}=2\lambda-2$ (since $\lambda=\frac{2\alpha}{3}$ and $\alpha>3$), we can combine the above inequality with \eqref{eq:huge_ineq}
\begin{equation}
\begin{aligned}
E_{n+1}-\left(1-\frac{\lambda-2}{n}\right)E_n\leqslant& \frac{\hat C_1(n,\alpha,\kappa)E_n}{(n-\lambda)^2}+\frac{\hat C_2(n,\alpha,\kappa)E_{n+1}}{(n+1-\lambda)^2}\\
&+2(\lambda-1)\langle x_{n+1}-x_n,x_{n+1}^*-x_n^*\rangle.
\end{aligned}
\end{equation}
As $\langle x_{n+1}-x_n,x_{n+1}^*-x_n^*\rangle\leqslant \delta_{n+1}$, for any $n>2\lambda-2$,
\begin{equation*}
E_{n+1}-\left(1-\frac{\lambda-2}{n}\right)E_n\leqslant\frac{\hat C_1(n,\alpha,\kappa)E_n}{(n-\lambda)^2}+\frac{\hat C_2(n,\alpha,\kappa)E_{n+1}}{(n+1-\lambda)^2}+2(\lambda-1)\delta_{n+1}.
\end{equation*}

Then, according to the first claim of Lemma \ref{lem:lemma_FISTA_AB} and the quadratic growth condition that can be rewritten with our notation as $h_n\leqslant\frac{E_n}{\kappa n^2}$ for any $n\in \N$, we get the following:
\begin{equation*}
\delta_{n+1}\leqslant \frac{2}{(n+1-\lambda)^2}b_{n+1}+\frac{8\alpha^2}{9(n+1-\lambda)^2}h_{n+1}\leqslant \frac{2}{(n+1-\lambda)^2}\left(1+\frac{4\alpha^2}{9\kappa(n+1)^2}\right)E_{n+1}.
\end{equation*}
Hence,
\begin{equation}
E_{n+1}-\left(1-\frac{\lambda-2}{n}\right)E_n\leqslant\frac{\tilde C_1(n,\alpha,\kappa)E_n}{(n-\lambda)^2}+\frac{\tilde C_2(n,\alpha,\kappa)E_{n+1}}{(n+1-\lambda)^2},\label{eq:FISTA_ineq_key}
\end{equation}
where $\tilde C_1(n,\alpha,\kappa)=\hat C_1(n,\alpha,\kappa)$ and 
$$\begin{aligned}\tilde C_2(n,\alpha,\kappa)&=\hat C_2(n,\alpha,\kappa)+ 4(\lambda-1)\left(1+\frac{4\alpha^2}{9\kappa(n+1)^2}\right)\\&=\left(\frac{2\alpha}{3}-1\right)\left(8+\frac{\sqrt{2}}{\sqrt{\kappa}}\right)\left(1+\frac{4\alpha^2}{9\kappa(n+1)^2}\right).\end{aligned}$$

As $\kappa\in(0,1]$, for any $n\geqslant \frac{4\alpha}{3\sqrt{\kappa}}$, we have that $\displaystyle\frac{1}{n-\lambda}=\frac{1}{n-\frac{2\alpha}{3}}\leqslant\frac{1}{n}\left(1+\sqrt{\kappa}\right)$ and thus, for any $n\geqslant\frac{4\alpha}{3\sqrt{\kappa}}$,
\begin{equation}
E_{n+1}-\left(1-\frac{\frac{2\alpha}{3}-2}{n}\right) E_n \leqslant (1+\sqrt{\kappa})^2\left(\tilde C_1(n,\alpha,\kappa)\frac{E_n}{n^2}+\tilde C_2(n,\alpha,\kappa)\frac{E_{n+1}}{(n+1)^2}\right).
\end{equation}

Observe that this inequality is identical to the one obtained in \cite[Proof~of~Lemma~1]{aujol2023fista} under the assumption that $F$ has a unique minimizer. The value of $\tilde C_1(n,\alpha,\kappa)$ does not change while $\tilde C_2(n,\alpha,\kappa)$ is slightly larger (in the case of uniqueness of the minimizer, $\tilde C_2(n,\alpha,\kappa)$ is equal to $\hat C_2(n,\alpha,\kappa)$).  As a consequence, the bounds computed for $\tilde C_1(n,\alpha,\kappa)$ in \cite{aujol2023fista} are still valid and in particular, there exist some real constants $\tilde c_1$ and $\tilde c_2$ such that for any $\alpha\geqslant 3+\frac{3}{\sqrt{2}}$ and any $n\geqslant\frac{4\alpha}{3\sqrt{\kappa}}$,
\begin{equation}
\tilde C_1(n,\alpha,\kappa) \leqslant \frac{5}{4} \sqrt{\frac{2}{\kappa}} P(\alpha)\left(1+ \tilde{c}_1 \sqrt \kappa{+ \tilde{c}_2 \kappa}\right),
\end{equation} 
where $P:\alpha\mapsto \frac{2}{9}(\alpha-3)(\frac{8}{5}\alpha-3)-1$. Moreover, note that for any $n\geqslant \frac{4\alpha}{3\sqrt{\kappa}}$ and $\alpha\geqslant3$,
\begin{equation*}
\begin{aligned}
\widetilde C_2(n,\alpha,\kappa) &=\left(\frac{2\alpha}{3}-1\right)\left(8 +\frac{\sqrt{2}}{\sqrt{\kappa}}\right)\left(1+\frac{4\alpha^2}{9\kappa (n+1)^2}\right)\\
&\leqslant  \frac{5}{4}\sqrt{\frac{2}{\kappa}}\left(\frac{2\alpha}{3}-1\right) \left(1+4 \sqrt{2\kappa}\right).
\end{aligned}
\end{equation*}
Hence, for any $\alpha\geqslant 3+\frac{3}{\sqrt{2}}$:
\begin{equation}\label{eq:FISTA_ineq_12}
\forall n\geqslant \frac{4\alpha}{3\sqrt{\kappa}},~E_{n+1}-\left(1-\frac{\frac{2\alpha}{3}-2}{n}\right) E_n \leqslant \frac{\mathbf{C_1}(\alpha,\kappa)E_n}{n^2}+\frac{\mathbf{C_2}(\alpha,\kappa)E_{n+1}}{(n+1)^2},
\end{equation}
where:
\begin{itemize}
\item $\mathbf{C_1}(\alpha,\kappa) = \frac{5}{4} \sqrt{\frac{2}{\kappa}}\left[\frac{2}{9}(\alpha-3)\left(\frac{8}{5}\alpha -3\right)-1\right](1+\sqrt{\kappa})^2\left(1+ \tilde{c}_1 \sqrt \kappa{+ \tilde{c}_2 \kappa}\right)$,
\item $\mathbf{C_2}(\alpha,\kappa) = \frac{5}{4}\sqrt{\frac{2}{\kappa}} \left(\frac{2\alpha}{3}-1\right)(1+\sqrt{\kappa})^2(1+4 \sqrt{2\kappa}).$
\end{itemize}

From there, we refer the reader to \cite{aujol2023fista} since the last steps of this proof are detailed in the proof of \cite[Theorem 6]{aujol2023fista}.  We first integrate inequality \eqref{eq:FISTA_ineq_12} with the following lemma which is a slightly modified version of \cite[Lemma~2]{aujol2023fista}.
\begin{lemme}
\label{lem:nrj_fista}
Let $\alpha\geqslant3$ and $n_0\geqslant \frac{4\alpha}{3\sqrt{\kappa}}$. If the energy $E_n$ satisfies \eqref{eq:FISTA_ineq_12} then:
\begin{equation}
\forall n\geqslant n_0,~E_n\leqslant E_{n_0}\left(\frac{n}{n_0}\right)^{-\left(\frac{2\alpha}{3}-2\right)}e^{\phi(n_0)},
\end{equation}
where $\phi(n_0)=\frac{5}{6n_0}\sqrt{\frac{2}{\kappa}}(\alpha-3)\left(\frac{16}{15}\alpha-1\right)\left(1+c\kappa^{\frac{1}{4}}\right)$ and $c>0$ is independent to $\alpha$.
\end{lemme}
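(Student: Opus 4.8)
The statement is a discrete Gr\"onwall-type estimate, and the plan is to prove it along the lines of \cite[Lemma~2]{aujol2023fista}. Write $\lambda=\frac{2\alpha}{3}$ throughout and assume $E_{n_0}>0$, since otherwise \eqref{eq:FISTA_ineq_12} forces $E_n\equiv 0$ for all $n\geqslant n_0$ and the conclusion is trivial. First I would recast the one-step inequality \eqref{eq:FISTA_ineq_12} as a multiplicative recursion: for $n\geqslant n_0\geqslant\frac{4\alpha}{3\sqrt\kappa}$ one has $(n+1)^2\geqslant n_0^2\geqslant\frac{16\alpha^2}{9\kappa}$, and since $\mathbf{C_2}(\alpha,\kappa)$ is of order $\alpha\sqrt{1/\kappa}$, the ratio $\frac{\mathbf{C_2}(\alpha,\kappa)}{(n+1)^2}$ is at most $\tfrac12$ once $\kappa$ is below a threshold (as in Theorem~\ref{thm:FISTA}); in particular $1-\frac{\mathbf{C_2}(\alpha,\kappa)}{(n+1)^2}>0$. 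Moving the $E_{n+1}$ term of \eqref{eq:FISTA_ineq_12} to the left-hand side and dividing then gives, for every $n\geqslant n_0$,
\[
E_{n+1}\leqslant\frac{1-\frac{\lambda-2}{n}+\frac{\mathbf{C_1}(\alpha,\kappa)}{n^2}}{1-\frac{\mathbf{C_2}(\alpha,\kappa)}{(n+1)^2}}\,E_n .
\]

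Next I would iterate this inequality from $n_0$ to $n-1$ and pass to logarithms. The numerator factor is positive (since $\lambda\geqslant2$ and $k\geqslant n_0$), so $\log(1+u)\leqslant u$ applies to it, while $-\log(1-u)\leqslant 2u$ holds on the denominator for $u\in[0,\tfrac12]$; summing the resulting bounds yields
\[
\log\frac{E_n}{E_{n_0}}\leqslant-(\lambda-2)\sum_{k=n_0}^{n-1}\frac1k+\sum_{k=n_0}^{n-1}\left(\frac{\mathbf{C_1}(\alpha,\kappa)}{k^2}+\frac{2\mathbf{C_2}(\alpha,\kappa)}{(k+1)^2}\right).
\]
The first sum is controlled by the integral comparison $\sum_{k=n_0}^{n-1}\frac1k\geqslant\int_{n_0}^{n}\frac{dt}{t}=\log\frac{n}{n_0}$, which after exponentiation produces the announced factor $\left(\frac{n}{n_0}\right)^{-(\frac{2\alpha}{3}-2)}$; the second sum is bounded uniformly in $n$ by $\bigl(\mathbf{C_1}(\alpha,\kappa)+2\mathbf{C_2}(\alpha,\kappa)\bigr)\sum_{k\geqslant n_0}\frac1{k^2}\leqslant\frac{\mathbf{C_1}(\alpha,\kappa)+2\mathbf{C_2}(\alpha,\kappa)}{n_0-1}$, which is the quantity to be identified with $\phi(n_0)$. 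Exponentiating gives $E_n\leqslant E_{n_0}\left(\frac{n}{n_0}\right)^{-(\frac{2\alpha}{3}-2)}e^{\phi(n_0)}$, and if $E_n$ happens to vanish at some index the bound holds trivially from that point on.

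The last step, and the only delicate one, is to check that this tail bound is at most $\frac{5}{6n_0}\sqrt{\frac2\kappa}(\alpha-3)\left(\frac{16}{15}\alpha-1\right)(1+c\kappa^{1/4})$ with $c>0$ independent of $\alpha$. This is constant-chasing: one inserts the explicit expressions $\mathbf{C_1}(\alpha,\kappa)=\frac54\sqrt{\frac2\kappa}\bigl[\frac29(\alpha-3)(\frac85\alpha-3)-1\bigr](1+\sqrt\kappa)^2(1+\tilde c_1\sqrt\kappa+\tilde c_2\kappa)$ and $\mathbf{C_2}(\alpha,\kappa)=\frac54\sqrt{\frac2\kappa}\bigl(\frac{2\alpha}{3}-1\bigr)(1+\sqrt\kappa)^2(1+4\sqrt{2\kappa})$, factors $(\alpha-3)$ out of the resulting polynomial in $\alpha$, and absorbs every $\kappa$-dependent multiplicative correction — including the contribution of the $\frac{4\alpha^2}{9\kappa k^2}$ terms hidden in $\mathbf{C_1},\mathbf{C_2}$, which when summed against $\sum_{k\geqslant n_0}\frac1{k^4}\lesssim n_0^{-3}$ only add an $O(\sqrt\kappa/\alpha)$ relative error because $n_0\gtrsim\alpha/\sqrt\kappa$ — into the single factor $1+c\kappa^{1/4}$. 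The hard part is precisely this bookkeeping: verifying that the leading $\alpha^2$ polynomial collapses to the stated factored form and that all scattered $\sqrt\kappa$ and $\kappa$ corrections can be dominated by $c\kappa^{1/4}$ with $c$ independent of $\alpha$. Everything else is the routine Gr\"onwall computation of \cite[Lemma~2]{aujol2023fista}; the only structural difference here is that $\mathbf{C_2}$ is the slightly enlarged version coming from the extra inner-product terms of the non-unique-minimizer setting, which changes the constants but not the mechanism.
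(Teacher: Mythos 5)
Your overall mechanism is the right one and matches the proof this paper points to (the discrete Gr\"onwall iteration of \cite[Lemma~2]{aujol2023fista}, which the paper invokes verbatim): rewrite \eqref{eq:FISTA_ineq_12} as a multiplicative recursion, take logarithms, compare the drift sum with $\log\frac{n}{n_0}$, and bound the perturbation tail by a quantity of order $1/n_0$. The gap is in the final quantitative step, which you defer as ``constant-chasing'' but which cannot succeed with the bounds you chose. By estimating the denominator via $-\log(1-u)\leqslant 2u$ you let $\mathbf{C_2}(\alpha,\kappa)$ enter the tail with coefficient $2$, so your bound on $\log\frac{E_n}{E_{n_0}}$ has tail $\frac{\mathbf{C_1}(\alpha,\kappa)+2\mathbf{C_2}(\alpha,\kappa)}{n_0-1}$. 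This destroys exactly the cancellation that the stated $\phi(n_0)$ encodes: at leading order in $\kappa$ one has $\mathbf{C_1}+\mathbf{C_2}=\frac{5}{4}\sqrt{\tfrac{2}{\kappa}}\,\frac{16}{45}\,\alpha(\alpha-3)+\dots$, i.e. the sum with coefficient $1$ carries the factor $(\alpha-3)$ that appears in $\phi(n_0)=\frac{5}{6n_0}\sqrt{\tfrac{2}{\kappa}}(\alpha-3)\left(\frac{16}{15}\alpha-1\right)(1+c\kappa^{1/4})$, whereas $\mathbf{C_1}+2\mathbf{C_2}$ has leading bracket $\frac{16}{45}\alpha(\alpha-3)+\frac{2\alpha}{3}-1$, which does not vanish at $\alpha=3$. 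Concretely, at $\alpha=3$ the lemma claims $E_n\leqslant E_{n_0}$ (since $\phi(n_0)=0$ and the exponent vanishes), while your tail bound is at least of order $\mathbf{C_2}/n_0\gtrsim 1$ uniformly in $\kappa$ because $n_0\geqslant\frac{4\alpha}{3\sqrt{\kappa}}$ and $\mathbf{C_2}\sim\alpha\kappa^{-1/2}$; this excess is an absolute constant, not absorbable into the multiplicative factor $(1+c\kappa^{1/4})$ of a quantity proportional to $(\alpha-3)$. A short check shows the mismatch persists on the whole range $3\leqslant\alpha\lesssim 5$, so the approach as set up fails for the stated hypothesis $\alpha\geqslant3$, not only at the endpoint.

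The fix, which is what the cited proof does, is to keep the coefficient of $\mathbf{C_2}$ equal to $1$ up to a negligible correction: for $u=\frac{\mathbf{C_2}}{(n+1)^2}\leqslant\frac12$ use $-\log(1-u)\leqslant u+u^2$ (or $\frac{1}{1-u}\leqslant 1+u+2u^2$), noting that the quadratic term summed over $k\geqslant n_0$ contributes $O\!\left(\mathbf{C_2}^2/n_0^3\right)=O(\sqrt{\kappa}/\alpha)$ relative to the main term, hence fits into $c\kappa^{1/4}$. Then the tail is governed by $\frac{\mathbf{C_1}+\mathbf{C_2}}{n_0-1}$, whose leading part $\frac{1}{n_0}\sqrt{\tfrac{2}{\kappa}}\,\frac49\alpha(\alpha-3)$ is dominated by $\frac{1}{n_0}\sqrt{\tfrac{2}{\kappa}}(\alpha-3)\left(\frac89\alpha-\frac56\right)$ for all $\alpha\geqslant 3$, and the remaining $(1+\sqrt{\kappa})^2$, $(1+4\sqrt{2\kappa})$, $\tilde c_1,\tilde c_2$ and $\frac{4\alpha^2}{9\kappa k^2}$ corrections are relative $O(\kappa^{1/4})$ and go into the constant $c$ (this is precisely where the enlarged $\mathbf{C_2}$ of the non-unique setting is absorbed, as the paper remarks). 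Two minor points: your argument implicitly needs $\kappa$ below a threshold so that $\frac{\mathbf{C_2}}{(n+1)^2}\leqslant\frac12$, which is consistent with how the lemma is used in Theorem~\ref{thm:FISTA} but should be said; and when $\mathbf{C_1}<0$ (which happens for $\alpha$ close to $3$) you must keep track of its sign rather than bounding $\sum_k \mathbf{C_1}/k^2$ by $\mathbf{C_1}/(n_0-1)$ as if $\mathbf{C_1}\geqslant0$ --- with the coefficient-$1$ route this sign bookkeeping is harmless because only the combination $\mathbf{C_1}+\mathbf{C_2}$ matters.
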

The proof of this lemma is identical to the proof of \cite[Lemma~2]{aujol2023fista} despite $\mathbf{C_2}(\alpha,\kappa)$ being larger than $C_2(\alpha,\kappa)$ in the other version. This difference is absorbed in the constant $c>0$.

Since $F(x_n)-F^*\leqslant \frac{L}{2n^2}E_n$, we get that for any $n\geqslant\frac{4\alpha}{3\sqrt{\kappa}}$,
\begin{equation*}
F(x_n)-F^*\leqslant \frac{L}{2}\left(n_0^{\frac{2\alpha}{3}-2}e^{\phi(n_0)}\right)E_{n_0}n^{-\frac{2\alpha}{3}}.
\end{equation*}
It is then essential to choose a relevant value for $n_0$ to get a control as tight as possible on $F(x_n)-F^*$. This discussion is already detailed in \cite{aujol2023fista} leading to the choice $$n_0=\frac{5}{4}\sqrt{\frac{2}{\kappa}}\left(\frac{16}{15}\alpha-1\right)\left(1+c\kappa^\frac{1}{4}\right),$$ which ensures that if $\kappa$ is sufficiently small, then
\begin{equation}
\forall n\geqslant \frac{3\alpha}{\sqrt{\kappa}},~F(x_n)-F^*\leqslant\frac{9}{4}e^{-2}M_0\left(\frac{8e}{3\sqrt{\kappa}}\alpha\right)^\frac{2\alpha}{3}n^{-\frac{2\alpha}{3}},
\end{equation}
where $M_0=F(x_0)-F^*$.\\
We now prove the second claim of Theorem \ref{thm:FISTA}. According to \eqref{eq:ineq_delta_n},  for any $n>\lambda$,
\begin{equation}
\delta_n\leqslant\frac{2}{(n-\lambda)^2}b_n+\frac{8\alpha^2}{9(n-\lambda)^2}h_n,
\end{equation}
where $b_n=\|\lambda (x_{n-1}-x_{n-1}^*)+n(x_n-x_{n-1})\|^2$. Considering the definition of the Lyapunov energy $E_n$, we have for any $n>\lambda$, $b_n\leqslant E_n$, hence:
\begin{equation}
\delta_n\leqslant \frac{2}{(n-\lambda)^2}E_n+\frac{8\alpha^2}{9(n-\lambda)^2}h_n,
\end{equation}
Since $F$ is assumed to satisfy a global quadratic growth condition $\mathcal{G}^2_\mu$ which implies that $h_n\leqslant \frac{E_n}{\kappa n^2}$ for any $n\in \N$, we get:
\begin{equation}
\forall n>\lambda,~\delta_n\leqslant \frac{2}{(n-\lambda)^2}\left(1+\frac{4\alpha^2}{9\kappa n^2}\right)E_n.
\end{equation}
Hence, for any $n\geqslant \frac{4\alpha}{3\sqrt{\kappa}}$, $\delta_n\leqslant \frac{5}{2(n-\lambda)^2}E_n$. By applying Lemma \ref{lem:nrj_fista}, we get that there exists some real constant $K>0$ such that $\delta_n\leqslant\frac{K}{n^\frac{2\alpha}{3}}$, which ensures that
\begin{equation}
\|x_n-x_{n-1}\|=\mathcal{O}\left(n^{-\frac{\alpha}{3}}\right).
\end{equation}

Finally, the strong convergence of FISTA iterates in the case when $F$ satisfies some global quadratic growth condition, follows from the summability of $\|x_n-x_{n-1}\|$ since $\alpha\geqslant 3+\frac{3}{\sqrt{2}}>3$.

\appendix
\section{Appendix}\label{app:1}

\subsection{Handling non-uniqueness of the minimizers in the continuous setting}
\label{sec:discussion_NU}
\off{
A classical approach to study the convergence of the trajectories of \eqref{eq:Nesterov_ODE} is to use a Lyapunov analysis. If $F$ has a unique minimizer i.e $X^*=\{x^*\}$, then a convenient Lyapunov energy is the following
\begin{equation}
\mathcal{E}(t)=t^2\left(F(x(t))-F^*\right)+\frac{1}{2}\|\lambda(x(t)-x^*)+t\dot{x}(t)\|^2+\frac{\xi}{2}\|x(t)-x^*\|^2,
\label{eq:Lyap_cont_uni}
\end{equation}
where $\lambda>0$ and $\xi\leqslant0$.

Our approach to extend classical analyses without the uniqueness assumption is to adapt the Lyapunov energies to our relaxed setting. Let $F$ have a non-empty set of minimizers $X^*$ which is not reduced to a point. We introduce the following Lyapunov function in this setting:}
In this section we assume that $F$ is a convex differentiable function having a $L$-Lipschitz gradient and a non-empty set of minimizers $X^*$. We introduce the following Lyapunov energy:
\begin{equation}
\mathcal{E}(t)=t^2\left(F(x(t))-F^*\right)+\frac{1}{2}\|\lambda(x(t)-x^*(t))+t\dot{x}(t)\|^2+\frac{\xi}{2}\|x(t)-x^*(t)\|^2,\label{eq:Lyap_disc}
\end{equation}
where for all $t\geqslant t_0$, $x^*(t)$ denotes the projection of $x(t)$ onto $X^*$, i.e
$$x^*(t)=\arg \inf\limits_{x^*\in X^*}\|x(t)-x^*\|^2.$$

Assume additionally that $X^*$ is second-order regular in the sense of Definition~\ref{def:second_order_regular} to that the projection $t\mapsto x^*(t)$ onto $X^*$ is right differentiable, as well as $\mathcal E$, and the right-hand derivative of $x^*$ is equal to $P^\prime_{X^*}(x(t),\dot{x}(t))$. For the sake of simplicity, let $\dot{x^*}$ and $\dot{\mathcal{E}}$ denote the corresponding right-hand derivatives. We can then write that:
\begin{equation}
\dot{\mathcal{E}}(t)=D(t)-(\lambda^2+\xi)\langle x(t)-x^*(t),\dot{x^*}(t)\rangle-\lambda t \langle \dot{x}(t),\dot{x^*}(t)\rangle,
\end{equation}
where 
$$\begin{aligned}D(t)=&2t\left(F(x(t))-F^*\right)+t^2\langle \nabla F(x(t)),\dot{x}(t)\rangle +\langle\lambda(x(t)-x^*(t))+t\dot{x}(t),(\lambda+1)\dot{x}(t)+t\ddot{x}(t)\rangle\\&+\xi\langle x(t)-x^*(t),\dot{x}(t)\rangle.\end{aligned}$$
Observe that $D$ is exactly equal to $\dot{\mathcal{E}}$ if $F$ has a unique minimizer $x^*$. The objective is then to control the additional terms $\langle x(t)-x^*(t),\dot{x^*}(t)\rangle$ and $\langle \dot{x}(t),\dot{x^*}(t)\rangle$. We introduce Figure \ref{fig:minimizers} to give an intuition of the behavior of these terms.

\begin{figure}[h]
\begin{center}
\begin{tikzpicture}
	\draw [fill=gray!20,domain=-2.5:2.5] plot (-\x*\x/3,\x) ;
	\draw (-1.2,-1.5) node[left]{$X^*$} ;
	\draw (2/3,5/3) node[right] {$x(t)$} node{$\bullet$} ;
	\draw (-1/3,1) node[below left] {$x^*(t)$} node{$\bullet$};
	\draw [<-,red,very thick] (1/6,2) -- (2/3,5/3) ;
	\draw [red] (1/6,2) node[above]{\small$\dot{x}(t)$\normalsize};
	\draw [<-,red,very thick] (-2/3,1.5) -- (-1/3,1) ;
	\draw [red] (-2/3,1.5) node[below left]{\small$\dot{x^*}(t)$\normalsize};
	\draw [->,blue,very thick] (-1/3,1) -- (2/3,5/3);
	\draw [blue] (1,0.9) node[]{\small$x(t)-x^*(t)$\normalsize};
\end{tikzpicture}
\begin{tikzpicture}
	\draw [fill=gray!20,domain=-2.5:2.5] plot (\x,-{abs(\x)}) ;
	\draw (0.6,-1.9) node[right]{$X^*$} ;
	\draw (-0.3,0.8) node[above right] {$x(t)$} node{$\bullet$} ;
	\draw (0,0) node[right] {$x^*(t)$} node{$\bullet$};
	\draw [<-,red,very thick] (-0.6,0.4) -- (-0.3,0.8) ;
	\draw [red] (-0.6,0.4) node[above left]{\small$\dot{x}(t)$\normalsize};
	\draw [red] (0,0) node[below left]{\small$\dot{x^*}(t)=0$\normalsize};
	\draw [->,blue,very thick] (0,0) -- (-0.3,0.8);
	\draw [blue] (0,0.5) node[right]{\small$x(t)-x^*(t)$\normalsize};
\end{tikzpicture}
\end{center}
\caption{Behavior of $\dot{x^*}$ for a set of minimizers having a $C^2$ bound (on the left) and a polyhedral set of minimizers (on the right).}
\label{fig:minimizers}
\end{figure}
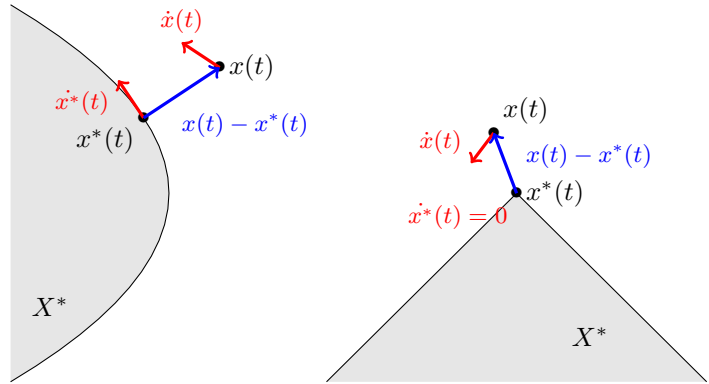

We can first prove that $\langle \dot{x}(t),\dot{x^*}(t)\rangle$ is positive by using the expression $\dot{x^*}(t)=\lim\limits_{h\rightarrow0}\frac{x^*(t+h)-x^*(t)}{h}$ and the property of the projection onto a convex set. Indeed, as $X^*$ is a closed convex set, for any $x\in\mathcal{H}$ and $u\in X^*$:
\begin{equation*}
\langle x-P_{X^*}(x),u-P_{X^*}(x)\rangle\leqslant0.
\end{equation*}
Thus, for any $h>0$ we have:\small
\begin{align*}
\langle x(t+h)-x(t),x^*(t+h)-x^*(t)\rangle&=\langle x(t+h)-x^*(t+h),x^*(t+h)-x^*(t)\rangle\\&~~~+\|x^*(t+h)-x^*(t)\|^2\\&~~~+\langle x(t)-x^*(t),x^*(t)-x^*(t+h)\rangle\\&\geqslant0.
\end{align*}\normalsize
By considering $h$ tending towards $0$ we can deduce that $\langle \dot{x}(t),\dot{x^*}(t)\rangle\geqslant0$.

In \cite[Theorem~7.2]{bonnans1998sensitivity} the authors give an expression of the directional derivative $P'_\mathcal{S}(x,d)$ for a closed convex set $\mathcal{S}\subset\mathcal{X}$ being second order regular at $P_\mathcal{S}(x)$ for some $x\in\mathcal{X}$. This directional derivative satisfies:
\begin{equation*}
\langle x-P_\mathcal{S}(x),P'_\mathcal{S}(x,d)\rangle=0.
\end{equation*}
Considering the assumptions made on $X^*$ we can deduce that $\langle x(t)-x^*(t),\dot{x^*}(t)\rangle=0$ for all $t\geqslant t_0$. 

These results ensure that for any choices of parameters $\lambda>0$ and $\xi\in\R$, we have that $\dot{\mathcal{E}}^*(t)\leqslant D(t)$. From this point, it is sufficient to apply the following lemma to extend the desired convergence results to the non-unique case. A proof is given in Section \ref{sec:proof_right_diff}.

\begin{lemme}
\label{lem:right-diff}
Let $\phi:\R\rightarrow\R$ be a continuous function which is right-differentiable. Assume that 
\begin{equation}
\forall t\geqslant t_0,~\phi_+(t)\leqslant\psi(t),
\label{eq:ineq_phi+}
\end{equation}
where $\phi_+(t)=\displaystyle\lim\limits_{h\rightarrow0,~h>0}\frac{\phi(t+h)-\phi(t)}{h}$ denotes the right derivative of $\phi$ at $t$. Then,
\begin{equation}
\forall t\geqslant t_0,~\phi(t)\leqslant\phi(t_0)+\int_{t_0}^t\psi(u)du.
\label{eq:maj_semidif}
\end{equation}
\end{lemme}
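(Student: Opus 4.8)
The plan is to reduce the statement to the elementary fact that a continuous function with everywhere nonpositive right derivative is non-increasing. First I would set $\Psi(t):=\int_{t_0}^t\psi(u)\,du$; in every application of this lemma $\psi$ is continuous, so $\Psi$ is differentiable with $\Psi'=\psi$, and I will write the argument in this case (for merely locally integrable $\psi$ one first regularizes). Define $g:=\phi-\Psi$, which is continuous on $[t_0,+\infty)$ and, being a difference of a right-differentiable and a differentiable function, is right-differentiable there with
\[
g_+(t)=\phi_+(t)-\psi(t)\leqslant 0,\qquad t\geqslant t_0,
\]
by \eqref{eq:ineq_phi+}. Once we know that such a $g$ is non-increasing on $[t_0,+\infty)$, we get $g(t)\leqslant g(t_0)$, i.e. $\phi(t)-\Psi(t)\leqslant\phi(t_0)-\Psi(t_0)=\phi(t_0)$, which is exactly \eqref{eq:maj_semidif}.

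To prove the monotonicity of $g$, I would fix $T>t_0$ and $\varepsilon>0$ and introduce the perturbed function $g_\varepsilon(t):=g(t)-\varepsilon(t-t_0)$ on $[t_0,T]$, which is still continuous and now has strictly negative right derivative $(g_\varepsilon)_+(t)=g_+(t)-\varepsilon\leqslant-\varepsilon$. Consider the super-level set $S:=\{t\in[t_0,T]:g_\varepsilon(t)>g_\varepsilon(t_0)\}$, which is relatively open by continuity of $g_\varepsilon$. If $S$ were nonempty, let $s:=\inf S$. Relative openness forces $s\notin S$ (otherwise a relative neighborhood of $s$ would be contained in $S$, contradicting the definition of the infimum, or forcing the impossible $t_0\in S$), hence $g_\varepsilon(s)\leqslant g_\varepsilon(t_0)$; on the other hand, picking $t_n\in S$ with $t_n\downarrow s$ (necessarily $t_n>s$) and passing to the limit gives $g_\varepsilon(s)\geqslant g_\varepsilon(t_0)$, so $g_\varepsilon(s)=g_\varepsilon(t_0)$, and moreover $s<T$, again by relative openness. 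But then $(g_\varepsilon)_+(s)<0$ yields a $\delta>0$ with $g_\varepsilon(\tau)<g_\varepsilon(s)=g_\varepsilon(t_0)$ for all $\tau\in(s,s+\delta]$, i.e. $(s,s+\delta]\cap S=\emptyset$, contradicting the existence of the $t_n$. Hence $S=\emptyset$, i.e. $g_\varepsilon\leqslant g_\varepsilon(t_0)=g(t_0)$ on $[t_0,T]$; letting $\varepsilon\to0$ gives $g\leqslant g(t_0)$ on $[t_0,T]$, and running the same argument on an arbitrary sub-interval $[t_1,t_2]$ yields $g(t_2)\leqslant g(t_1)$, so $g$ is non-increasing and the proof is complete.

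The only genuinely delicate point is this last step --- deducing monotonicity from the pointwise sign of the right (Dini-type) derivative: one cannot simply integrate $g_+$, since it need not be integrable, so a connectedness/infimum argument on the super-level set is unavoidable, and the perturbation $-\varepsilon(t-t_0)$ is precisely what upgrades the non-strict hypothesis $g_+\leqslant0$ into the strict inequality that drives the contradiction. The remaining points --- right-differentiability of $g$ and the identity $\Psi'=\psi$ --- are routine once $\psi$ is assumed continuous, which is the case throughout the paper.
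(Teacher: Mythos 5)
Your proof is correct, but it follows a genuinely different route from the paper's. You reduce everything to the classical fact that a continuous function with everywhere nonpositive right derivative is non-increasing, and you prove that fact directly through the perturbation $g_\varepsilon(t)=g(t)-\varepsilon(t-t_0)$ and the infimum-of-the-super-level-set argument; the only extra structural input is the continuity of $\psi$ (so that $\Psi'=\psi$ pointwise), which you correctly observe holds in every application of the lemma in the paper. The paper argues differently: it invokes Young's theorem \cite{young1914note} that a continuous right-differentiable function is differentiable outside an at most countable set, orders these exceptional points, integrates $\phi'=\phi_+\leqslant\psi$ on each subinterval between consecutive exceptional points, and glues the resulting bounds across the endpoints by continuity. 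Your approach buys self-containedness --- no appeal to Young's theorem --- and it is also more robust: it makes no assumption on the structure of the set of non-differentiability points, whereas enumerating a countable exceptional set as an increasing sequence of consecutive points (as in the paper's bookkeeping) is not available when that set is dense, so your connectedness argument actually avoids a delicate point. In exchange, the paper's route asks essentially nothing of $\psi$ beyond local integrability, while your clean argument needs $\psi$ continuous; the regularization you allude to for merely integrable $\psi$ (e.g.\ passing through a lower semicontinuous majorant with nearby integral) is plausible but not spelled out --- harmless here, since $\psi$ is continuous in all of the paper's uses of the lemma.
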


\subsection{Proof of Theorem \ref{thm:AVD_flat} under Hölderian growth condition}
\label{sec:proof_flat}
We focus our analysis on the following Lyapunov energy introduced in \cite{sebbouh2020convergence}:
\begin{equation}
\mathcal{J}(t)=t^p\left(t^2(F(x(t))-F^*)+\frac{1}{2}\|\lambda(x(t)-x^*(t))+t\dot{x}(t)\|^2+\frac{\xi}{2}\|x(t)-x^*(t)\|^2\right),
\end{equation}
where $p=1+\frac{4}{\gamma-2}$ and $\lambda>0$. We use the following notations:
\begin{gather*}
a(t)=t\left(F(x(t))-F^*\right),~b(t)=\frac{1}{2t}\|\lambda (x(t)-x^*(t))+t\dot{x}(t)\|^2\\
c(t)=\frac{1}{2t}\|x(t)-x^*(t)\|^2.
\end{gather*}
The Lyapunov function can be rewritten as follows:
\begin{equation*}
\mathcal{J}(t)=t^{p+1}\left(a(t)+b(t)+\xi c(t)\right).
\end{equation*}
Following the discussion on the derivability of $\mathcal{E}^*$ defined in \eqref{eq:Lyap_disc} in Section \ref{sec:discussion_NU}, we can say that under the assumption made on $X^*$, $\mathcal{E}^*$ is right differentiable. Noticing that $\mathcal{J}(t)=t^p\mathcal{E}^*(t)$, this is also true for $\mathcal{J}$. For the sake of simplicity, the right derivative of $\mathcal{J}$ is denoted $\mathcal{J}^\prime$. By adapting \cite[Lemma~4.4]{sebbouh2020convergence} to our case, we get that if $\xi=\lambda(\lambda+1-\alpha)$, then
\begin{equation*}
\mathcal{J}^\prime(t)\leqslant t^p\left((2+p-\lambda)a(t)+(2(\lambda+1-\alpha)+p)b(t)+\lambda(\lambda+1-\alpha)(p-2\lambda)c(t)\right).
\end{equation*}
Let $\lambda=\alpha-1-\frac{p}{2}$. Under the condition $\alpha>\frac{9}{2}+\frac{6}{\gamma-2}$, we have that 
\begin{equation*}
\left\{\begin{gathered}
2+p-\lambda<0,\\
2(\lambda+1-\alpha)+p=0,\\
\lambda(\lambda+1-\alpha)(p-2\lambda)>0.\end{gathered}\right.
\end{equation*}
As a consequence, we can write that:
\begin{equation*}
\mathcal{J}^\prime (t)\leqslant t^p\left(Aa(t)+Cc(t)\right),
\end{equation*}
where $A=3-\alpha+\frac{3p}{2}<0$ and $C=p\left(\alpha-1-\frac{p}{2}\right)(\alpha-1)>0$. We can apply \cite[Lemma~4.5]{sebbouh2020convergence} which we recall below.
\begin{lemme}
\label{lem:flat_cond}
If $F$ satisfies the inequality \eqref{eq:Lojasiewicz_dis_flat} for some $\gamma>2$ and $K>0$, i.e. $F$ satisfies $\mathcal{G}^\gamma_{loc}$, then there exists $t_1\geqslant t_0$ such that for all $t\geqslant t_1$,
\begin{equation*}
t^{p_2+1}c(t)\leqslant \frac{K^{-\frac{2}{\gamma}}}{2}\left(t^{p_2+1}a(t)\right)^\frac{2}{\gamma},
\end{equation*}
where $p_2=\frac{4}{\gamma-2}$.\\
It follows that for any $m\in\R$, there exists $M\in\R$ such that for any $t\geqslant t_1$,
\begin{equation*}
t^{p_2+1}(m c(t)-a(t))\leqslant M.
\end{equation*}
\end{lemme}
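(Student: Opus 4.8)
The plan is to first show that the trajectory eventually enters the region where the local growth condition $\mathcal G^\gamma_{loc}$ is active, and then to convert that pointwise inequality into the claimed bound relating $c(t)$ and $a(t)$ by an elementary algebraic rescaling; the second assertion is then a one–variable estimate.

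First I would establish that $d(x(t),X^*)\to0$ as $t\to+\infty$. Since $F$ is convex and coercive and $\alpha>3$ in the setting of Theorem~\ref{thm:AVD_flat}, the classical analysis of \eqref{eq:Nesterov_ODE} (Su et al.~\cite{su2014differential}, Attouch et al.~\cite{attouch2018fast}) gives $F(x(t))-F^*=\mathcal O(t^{-2})$, hence $F(x(t))-F^*\to0$; coercivity then forces $\{x(t):t\ge t_0\}$ to be bounded. Reproducing the argument of Lemma~\ref{lem_cv_locale} in continuous time --- boundedness plus weak compactness, followed by weak lower semicontinuity of $F$ to exclude any cluster point outside $X^*$ --- yields $d(x(t),X^*)\to0$. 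Consequently there is $t_1\ge t_0$ such that $d(x(t),X^*)\le\varepsilon$ for all $t\ge t_1$, so that $\mathcal G^\gamma_{loc}$ applies:
\[
\forall t\ge t_1,\qquad K\,d(x(t),X^*)^\gamma\le F(x(t))-F^*.
\]

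Next I would raise this to the power $2/\gamma$ (valid because both sides are nonnegative and $\gamma>2$), obtaining $d(x(t),X^*)^2\le K^{-2/\gamma}\bigl(F(x(t))-F^*\bigr)^{2/\gamma}$. Substituting $2t\,c(t)=\|x(t)-x^*(t)\|^2=d(x(t),X^*)^2$ and $F(x(t))-F^*=a(t)/t$ gives $c(t)\le \tfrac{1}{2}K^{-2/\gamma}\,t^{-1-2/\gamma}a(t)^{2/\gamma}$. Multiplying by $t^{p_2+1}$ and using the identity $p_2-\tfrac{2}{\gamma}=(p_2+1)\tfrac{2}{\gamma}$, which is exactly equivalent to $p_2=\tfrac{4}{\gamma-2}$, produces $t^{p_2+1}c(t)\le \tfrac{1}{2}K^{-2/\gamma}\bigl(t^{p_2+1}a(t)\bigr)^{2/\gamma}$, the first claim. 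For the second claim, write $X=t^{p_2+1}a(t)\ge0$ and $\delta=2/\gamma\in(0,1)$: if $m\le0$ then $t^{p_2+1}(m\,c(t)-a(t))\le0$ since $a(t),c(t)\ge0$, so $M=0$ works; if $m>0$, the first claim gives $t^{p_2+1}(m\,c(t)-a(t))\le \tfrac{m}{2}K^{-2/\gamma}X^{\delta}-X$, and the map $X\mapsto \tfrac{m}{2}K^{-2/\gamma}X^{\delta}-X$ is continuous on $[0,+\infty)$ with limit $-\infty$ at $+\infty$ (since $\delta<1$), hence bounded above by some $M\in\R$. The only genuinely delicate point is the very first step: because $X^*$ is not a singleton, the convergence of $x(t)$ to the minimizing set --- which is what makes the local growth bound usable --- is not automatic and has to be extracted from coercivity via the weak–compactness argument of Lemma~\ref{lem_cv_locale}.
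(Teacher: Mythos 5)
Your proof is correct, and it follows essentially the route the paper relies on: the paper does not prove Lemma \ref{lem:flat_cond} itself (it is recalled from \cite[Lemma~4.5]{sebbouh2020convergence}), but your argument mirrors exactly the paper's proof of the discrete analogue, Lemma \ref{lem:geo} — use the classical $\mathcal{O}(t^{-2})$ decay together with coercivity and the continuous-time version of Lemma \ref{lem_cv_locale} to guarantee that the trajectory eventually enters the region where $\mathcal{G}^\gamma_{loc}$ is active, then raise the growth inequality to the power $\frac{2}{\gamma}$ and use the identity forced by $p_2=\frac{4}{\gamma-2}$. Your treatment of the second claim (boundedness above of $X\mapsto \frac{m}{2}K^{-2/\gamma}X^{2/\gamma}-X$ on $[0,+\infty)$) is the same elementary one-variable estimate the paper uses via Lemma \ref{lem:min_g}.
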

As $p=p_2+1$, this lemma ensures that there exists $M_1\in\R$ such that for all $t\geqslant t_0$, $\mathcal{J}^\prime(t)\leqslant M_1$. Then, Lemma \ref{lem:right-diff} gives us that that there exists $M_2\in\R$ such that $\mathcal{J}(t)\leqslant M_1t+M_2$ and consequently
\begin{equation*}
t^{p+1}(a(t)+\xi c(t))\leqslant M_1t+M_2.
\end{equation*}
Therefore, for $t$ sufficiently large,
\begin{equation*}
t^pa(t)\leqslant 2M_1+|\xi| t^p c(t).
\end{equation*}
The first claim of Lemma \ref{lem:flat_cond} gives us that there exists $M_3>0$ such that:
\begin{equation}
t^pa(t)\leqslant 2M_1+M_3\left(t^pa(t)\right)^\frac{2}{\gamma}.
\end{equation}
Lemma \ref{lem:control_v} guarantees that there exists $M_4>0$ such that for $t$ sufficiently large, 
\begin{equation*}
t^pa(t)\leqslant M_4,
\end{equation*}
and thus,
\begin{equation}
F(x(t))-F^*\leqslant \frac{M_4}{t^{p+1}}.
\end{equation}
As $p+1=\frac{2\gamma}{\gamma-2}$, the first claim is proved.\\
We prove the second claim by coming back to the inequality $\mathcal{J}(t)\leqslant M_1t+M_2$. By applying the inequality $\|u\|^2\leqslant2\|u+v\|^2+2\|v\|^2$ to $u=t\dot{x}(t)$ and $v=\lambda(x(t)-x^*(t))$, we get that
\begin{equation*}
b(t)\geqslant\frac{t}{2}\|\dot{x}(t)\|^2-\lambda^2c(t).\end{equation*}
Consequently, for sufficiently large $t$ we have that:
\begin{equation*}
t^p\left(a(t)-\left(|\xi|+\lambda^2\right)c(t)+\frac{t}{2}\|\dot{x}(t)\|^2\right)\leqslant 2M_1.
\end{equation*}
Lemma \ref{lem:flat_cond} gives us that there exists $M_5>0$ such that:
\begin{equation*}
t^p\left(a(t)-\left(|\xi|+\lambda^2\right)c(t)\right)\geqslant t^p a(t)-M_5\left(t^pa(t)\right)^\frac{2}{\gamma}.
\end{equation*}
\begin{lemme}
\label{lem:min_g}
Let $g:x\mapsto x-Kx^\delta$ for some $K>0$ and $\delta\in(0,1)$. Then for all $x\geqslant 0$,
$$g(x)\geqslant K(\delta-1)(\delta K)^{\frac{\delta}{1-\delta}}.$$
\end{lemme}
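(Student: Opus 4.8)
\textbf{Proof proposal for Lemma~\ref{lem:min_g}.}
The plan is a direct one-variable calculus argument: locate the unique critical point of $g$ on $(0,+\infty)$, show it is a global minimizer, and evaluate $g$ there.

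First I would record the qualitative behaviour of $g$ on $[0,+\infty)$. Since $\delta\in(0,1)$, the map $x\mapsto x^\delta$ is continuous on $[0,+\infty)$ and $C^1$ on $(0,+\infty)$, so $g$ is continuous on $[0,+\infty)$, with $g(0)=0$, and $g(x)=x\bigl(1-Kx^{\delta-1}\bigr)\to+\infty$ as $x\to+\infty$ because $x^{\delta-1}\to0$. Hence $g$ attains a global minimum on $[0,+\infty)$, and since $g$ takes negative values (as we will see) while $g(0)=0$, this minimum is attained at an interior point.

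Next I would differentiate: for $x>0$,
\begin{equation*}
g'(x)=1-\delta K x^{\delta-1},
\end{equation*}
which vanishes if and only if $x^{1-\delta}=\delta K$, i.e. at the single point $x_*:=(\delta K)^{\frac{1}{1-\delta}}$. To see that $x_*$ is the global minimizer, I would note that $g'$ is strictly increasing on $(0,+\infty)$, since its derivative is $-\delta(\delta-1)Kx^{\delta-2}=\delta(1-\delta)Kx^{\delta-2}>0$; moreover $g'(x)\to-\infty$ as $x\to0^+$ and $g'(x)\to1$ as $x\to+\infty$. Therefore $g'<0$ on $(0,x_*)$ and $g'>0$ on $(x_*,+\infty)$, so $g$ is strictly decreasing then strictly increasing, and $g(x)\geqslant g(x_*)$ for all $x\geqslant0$.

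Finally I would evaluate $g$ at $x_*$. Using $x_*^\delta=(\delta K)^{\frac{\delta}{1-\delta}}$ and $x_*=(\delta K)^{\frac{1}{1-\delta}}=(\delta K)\cdot(\delta K)^{\frac{\delta}{1-\delta}}$, we get
\begin{equation*}
g(x_*)=x_*-Kx_*^\delta=(\delta K)^{\frac{\delta}{1-\delta}}\bigl(\delta K-K\bigr)=K(\delta-1)(\delta K)^{\frac{\delta}{1-\delta}},
\end{equation*}
which is exactly the claimed lower bound. The argument involves no real obstacle; the only minor points to handle carefully are the $x\to0^+$ behaviour of $g'$ (where it is unbounded rather than finite) and the bookkeeping of the exponents $\tfrac{1}{1-\delta}$ and $\tfrac{\delta}{1-\delta}$ when simplifying $g(x_*)$. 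One could also phrase the monotonicity step via convexity of $g$ on $(0,+\infty)$, but tracking the sign of $g'$ directly is the most transparent route.
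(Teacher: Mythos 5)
Your proof is correct: the critical point $x_*=(\delta K)^{\frac{1}{1-\delta}}$, the monotonicity of $g'$, and the evaluation $g(x_*)=K(\delta-1)(\delta K)^{\frac{\delta}{1-\delta}}$ are all computed accurately, and the boundary case $x=0$ is covered since the bound is negative while $g(0)=0$. The paper states Lemma~\ref{lem:min_g} without proof, treating it as an elementary calculus fact; your argument is exactly the standard one-variable minimization that the authors leave implicit, so there is nothing to reconcile.
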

Lemma \ref{lem:min_g} ensures that there exists $M_6\in\R$ such that $t^p\left(a(t)-\left(|\xi|+\lambda^2\right)c(t)\right)\geqslant M_6$. Hence, for $t$ sufficiently large,
\begin{equation*}
\frac{t^{p+1}}{2}\|\dot{x}(t)\|^2\leqslant 2M_1+M_6,
\end{equation*}
and thus:
\begin{equation}
\|\dot{x}(t)\|\leqslant \frac{M_7}{t^{\frac{p+1}{2}}},
\end{equation}
where $M_7=4M_1+2M_6\geqslant0$ and $\frac{p+1}{2}=\frac{\gamma}{\gamma-2}$. Thus the trajectory $t\mapsto (x(t),\dot x(t))$ has a finite length and $t\mapsto x(t)$ strongly converges to a minimizer of $F$.

\subsection{Proof of Theorem \ref{thm:continu1} and Proposition \ref{thm:continu2} under a quadratic growth condition}
\label{sec:proof_traj_cont1}
The proof of Theorem \ref{thm:continu1} is very similar to the one of \cite[Theorem~5]{aujol2023fista} and is not reproduced entirely here, but we recall the essential steps of this proof. We first introduce the following Lyapunov energy:
\begin{equation}
\mathcal{E}(t)=t^2(F(x(t))-F^*)+\frac{1}{2}\|\lambda(x(t)-x^*(t))+t\dot{x}(t)\|^2,~\lambda=\frac{2\alpha}{3}
\end{equation}
where $x^*(t)$ denotes the projection of the trajectory $x(t)$ solution of $\eqref{AVD}$ onto the set of minimizers. According to the discussion in Section \ref{sec:discussion_NU}, the energy $\mathcal E$ is (right-)differentiable, allowing to carry out the proof of \cite[Theorem~5]{aujol2023fista} without any particular difficulty. The only additional challenge is to deal with the terms involving $\dot x^*(t)$, which is described in Section \ref{sec:discussion_NU}).
\paragraph{\it First case: $F$ satisfies a global quadratic growth condition (Proposition \ref{thm:continu2})}
Following the proof of \cite[Theorem~5]{aujol2023fista}, we can show that the right derivative of $\mathcal{E}$ denoted $\mathcal{E}^\prime$ satisfies:
\begin{equation*}
    \forall t\geqslant t_0,\quad\mathcal{E}^\prime(t)+\frac{\lambda-2}{t}\mathcal{E}(t)\leqslant\phi(t)\mathcal{E}(t),
\end{equation*}
where:
\begin{equation*}
    \phi:t\mapsto \frac{2\alpha\left(\alpha-3\right)}{9\mu t^2}\left(\sqrt{\mu}+\frac{2\alpha}{3t}(1+\sqrt{2})+\frac{4\alpha^2}{9\sqrt{\mu}t^2}\right).
\end{equation*}
This inequality combined with Lemma \ref{lem:right-diff} ensures that $t\mapsto \mathcal{E}(t)t^{\lambda-2}e^{\Phi(t)}$, where $\Phi:t\mapsto\int_t^{+\infty}\phi(s)ds$, is decreasing on $[t_0,+\infty)$. As a consequence, for any $t_1\geqslant t_0$:
\begin{equation*}
    \forall t\geqslant t_1,\quad \mathcal{E}(t)\leqslant\mathcal{E}(t_1)\left(\frac{t_1}{t}\right)^{\lambda-2}e^{\Phi(t_1)-\Phi(t)}.
\end{equation*}
The next steps of the demonstration rely on showing that $\Phi$ is positive, choosing a relevant value for $t_1$ and bounding each term of the inequality.
\paragraph{\it Second case: $F$ satisfies a local quadratic growth condition (Theorem \ref{thm:continu1})}
Similar to Lemma \ref{lem_cv_locale}, we can use the coercivity of $F$ and the convergence of $t\mapsto F(x(t))-F^*$ to $0$ (since it is well known that for any $\alpha>3$, $F(x(t))-F^*=\mathcal{O}\left(t^{-2}\right)$, see \cite{su2014differential}) to prove the existence of $t_\varepsilon\geqslant t_0$ such that:
\begin{equation*}
    \forall t\geqslant t_\varepsilon,\quad \frac{\mu}{2}d(x(t),X^*)\leqslant F(x(t))-F^*).
\end{equation*}
By following the proof in the global case and replacing $t_0$ by $t_\varepsilon$, we can easily find the desired asymptotic result:
\begin{equation*}
    F(x(t))-F^*=\mathcal{O}\left(t^{-\frac{2\alpha}{3}}\right).
\end{equation*}
\paragraph{\it Showing that the trajectory has a finite length}
We consider that $F$ satisfies $\mathcal{G}^2_{\mu,loc}$. It is shown that there exist some $t_1\geqslant t_\varepsilon$ and $K>0$ such that
\begin{equation}
\forall t\geqslant t_1,~\mathcal{E}(t)\leqslant Kt^{-\frac{2\alpha}{3}+2}.\label{eq:continu_AVD_nrj}
\end{equation}
Moreover, by applying inequality $\|u\|^2\leqslant 2\|u+v\|^2+2\|v\|^2$, we obtain that:
\begin{equation}
\|\dot{x}(t)\|^2\leqslant \frac{2}{t^2}\|\lambda(x(t)-x^*(t))+t\dot{x}(t)\|^2+\frac{2\lambda^2}{t^2}\|x(t)-x^*(t)\|^2.
\end{equation}
Hence, the assumption $\mathcal{G}^2_{\mu,loc}$ guarantees that
\begin{equation}
\forall t\geqslant t_1,\quad\|\dot{x}(t)\|^2\leqslant \frac{4}{t^2}\left(1+\frac{\lambda^2}{\mu t^2}\right)\mathcal{E}(t).
\end{equation}
Inequality \eqref{eq:continu_AVD_nrj} gets us to the conclusion:
\begin{equation}
\|\dot{x}(t)\|=\mathcal{O}\left(t^{-\frac{\alpha}{3}}\right).
\end{equation}
Since $\alpha>3$, we obtain that $\int_{t_1}^{+\infty}\|\dot{x}(t)\|dt<+\infty$ which implies that the trajectory $x(\cdot)$ has a finite length. Combined with the convergence rate on function values, this guarantees that $x(\cdot)$ converges to some minimizer of $F$.
\qed

\section{Proofs of technical Lemmas \ref{lem:tech1}, \ref{lem:tech2}, \ref{lem:ineq_flat1}, \ref{lem:lemma_FISTA_AB} and \ref{lem:right-diff}}\label{app:2}

\subsection{Proof of Lemma \ref{lem:tech1}}
\label{sec:proof_lemma_tech1}
Let $n\in\N^*$. By rewriting \begin{equation*}x_n-x_n^*=\frac{1}{2}\left((x_n-x_{n-1})+(x_{n-1}-x_{n-1}^*)+(x_{n-1}^*-x_n^*)+(x_n-x_n^*)\right),\end{equation*}we get that:
\begin{equation*}
\langle x_n-x_n^*,x_n-x_{n-1}\rangle=\frac{1}{2}\delta_n+\frac{1}{2}\langle(x_{n-1}-x_{n-1}^*)+(x_{n-1}^*-x_n^*)+(x_n-x_n^*),x_n-x_{n-1}\rangle.
\end{equation*}
Noticing that $x_n-x_{n-1}=(x_n-x_n^*)+(x_n^*-x_{n-1}^*)+(x_{n-1}^*-x_{n-1})$ leads to:
\begin{equation*}
\begin{aligned}
2\langle x_n-x_n^*,x_n-x_{n-1}\rangle&=\delta_n+\langle x_{n-1}-x_{n-1}^*,x_n-x_n^*\rangle +\langle x_{n-1}-x_{n-1}^*,x_n^*-x_{n-1}^*\rangle\\
&-h_{n-1}-\langle x_n^*-x_{n-1}^*,x_n-x_n^*\rangle+\langle x_{n-1}-x_{n-1}^*,x_n^*-x_{n-1}^*\rangle\\
&-\gamma_n^*+\langle x_{n-1}-x_{n-1}^*,x_n^*-x_{n-1}^*\rangle-\langle x_{n-1}-x_{n-1}^*,x_n-x_n^*\rangle+h_n\\
&=h_n-h_{n-1}+\delta_n-\gamma_n^*+2\langle x_{n-1}-x_{n-1}^*,x_n^*-x_{n-1}^*\rangle.
\end{aligned}
\end{equation*}
The second claim is proved using the same approach. We rewrite \begin{equation*}x_{n-1}-x_{n-1}^*=\frac{1}{2}\left((x_{n-1}-x_n)+(x_n-x_n^*)+(x_n^*-x_{n-1}^*)+(x_{n-1}^*-x_{n-1})\right),\end{equation*}and consequently:
\begin{equation*}
2\langle x_{n-1}-x_{n-1}^*,x_n-x_{n-1}\rangle=-\delta_n+\langle (x_n-x_n^*)+(x_n^*-x_{n-1}^*)+(x_{n-1}^*-x_{n-1}),x_n-x_{n-1}\rangle.
\end{equation*}
By applying the same rewriting of $x_n-x_{n-1}$, simple calculations give that:
\begin{equation*}
\langle x_{n-1}-x_{n-1}^*,x_n-x_{n-1}\rangle=\frac{1}{2}(h_n-h_{n-1}-\delta_n+\gamma_n^*)+\langle x_{n}-x_{n}^*,x_n^*-x_{n-1}^*\rangle.
\end{equation*}\qed

\subsection{Proof of Lemma \ref{lem:tech2}}
\label{sec:proof_lemma_tech2}
The first claim is straightforward as Lemma 3.1 of \cite{chambolle2015convergence} ensures that:
\begin{equation*}
F(x_{n+1})-F(x_n)\leqslant\frac{L}{2}\left(\|y_n-x_n\|^2-\|x_{n+1}-x_n\|^2\right).
\end{equation*}
By writing $y_n=x_n+\alpha_n(x_n-x_{n-1})$ and $\frac{2}{L}(F(x_{n+1})-F(x_n))=w_{n+1}-w_n$, we can conclude.

By applying Lemma 3.1 of \cite{chambolle2015convergence} to an other couple of points, we get that:
\begin{equation*}
F(x_{n+1})-F^*\leqslant \frac{L}{2}\left(\|y_n-x_n^*\|^2-\|x_{n+1}-x_n^*\|^2\right).
\end{equation*}
It follows that:
\begin{equation*}
\begin{aligned}
w_{n+1}&\leqslant\|x_n+\alpha_n(x_n-x_{n-1})-x_n^*\|^2-\|(x_{n+1}-x_{n+1}^*)+(x_{n+1}^*-x_n^*)\|^2\\
&\leqslant h_n+\alpha_n^2\delta_n-h_{n+1}-\gamma_{n+1}^*+2\alpha_n\langle x_n-x_n^*,x_n-x_{n-1}\rangle\\&-2\langle x_{n+1}-x_{n+1}^*,x_{n+1}^*-x_n^*\rangle.
\end{aligned}
\end{equation*}
Recall that the first claim of Lemma \ref{lem:tech1} ensures that:
\begin{equation*}
\langle x_n-x_n^*,x_n-x_{n-1}\rangle=\frac{1}{2}(h_n-h_{n-1}+\delta_n-\gamma_n^*)+\langle x_{n-1}-x_{n-1}^*,x_n^*-x_{n-1}^*\rangle,
\end{equation*}
we can deduce that:
\begin{equation*}
\begin{aligned}
w_{n+1}&\leqslant (1+\alpha_n)h_n+(\alpha_n^2+\alpha_n)\delta_n-\alpha_n h_{n-1}-h_{n+1}-\gamma_{n+1}^*-\alpha_n\gamma_n^*\\
&+2\alpha_n\langle x_{n-1}-x_{n-1}^*,x_n^*-x_{n-1}^*\rangle-2\langle x_{n+1}-x_{n+1}^*,x_{n+1}^*-x_n^*\rangle.
\end{aligned}
\end{equation*}\qed

\subsection{Proof of Lemma \ref{lem:ineq_flat1}}
\label{sec:proof_tech_flat1}

Recall the definition of the discrete Lyapunov energy $\mathcal{E}$:
\begin{equation}
\mathcal{E}_n=n^2w_n+b_n+\xi h_n+\lambda n \alpha_n^2\delta_n.
\end{equation}
Observe that for any $n\in\N$,
\begin{equation}
b_n=\lambda^2 h_n+n^2\alpha_n^2\delta_n+2\lambda n\alpha_n\langle x_n-x_n^*,x_n-x_{n-1}\rangle,
\end{equation}
and by applying the first claim of Lemma \ref{lem:tech1} we get that:
\begin{equation}
\begin{aligned}
b_n=&\lambda^2 h_n+\lambda n\alpha_n(h_n-h_{n-1})+n\alpha_n(n\alpha_n+\lambda)\delta_n\\&-\lambda n\alpha_n \gamma_n^*+2\lambda n\alpha_n\langle x_{n-1}-x_{n-1}^*,x_n^*-x_{n-1}^*\rangle.
\end{aligned}
\label{eq:dev_bn}
\end{equation}
As a consequence,\small
\begin{equation}
\begin{aligned}
b_{n+1}-b_n = &\lambda(\lambda+(n+1)\alpha_{n+1})(h_{n+1}-h_n)+(n+1)\alpha_{n+1}\left((n+1)\alpha_{n+1}+\lambda\right)\delta_{n+1}\\
&-\lambda n\alpha_n(h_n-h_{n-1})-n\alpha_n(n\alpha_n+\lambda)\delta_n-\lambda (n+1)\alpha_{n+1}\gamma_{n+1}^*+\lambda n\alpha_n\gamma_n^*\\
&+2\lambda(n+1)\alpha_{n+1}\langle x_n-x_n^*,x_{n+1}^*-x_n^*\rangle-2\lambda n \alpha_n\langle x_{n-1}-x_{n-1}^*,x_n^*-x_{n-1}^*\rangle.
\end{aligned}\label{eq:b_n1}
\end{equation}\normalsize
On the other hand, we have that:
\begin{equation}
(n+1)^2w_{n+1}-n^2w_n=n^2(w_{n+1}-w_n)+(2n+1)w_{n+1},
\end{equation}
and by applying the first claim of Lemma \ref{lem:tech2}:
\begin{equation}
(n+1)^2w_{n+1}-n^2w_n\leqslant n^2(\alpha_n^2\delta_n-\delta_{n+1})+(2n+1)w_{n+1},\label{eq:w_n1}
\end{equation}
By combining \eqref{eq:b_n1} and \eqref{eq:w_n1}, we get that:
\begin{equation}
\begin{aligned}
\mathcal{E}_{n+1}-\mathcal{E}_n\leqslant &(2n+1)w_{n+1}-\lambda n(\alpha_n+\alpha_n^2)\delta_n\\&+((n+1)\alpha_{n+1}((n+1)\alpha_{n+1}+\lambda)+\lambda(n+1)\alpha_{n+1}^2-n^2)\delta_{n+1}\\
&+\lambda n\alpha_n (h_{n-1}-h_n)-\lambda\left(\lambda+(n+1)\alpha_{n+1}+\frac{\xi}{\lambda}\right)(h_n-h_{n+1})\\&-\lambda (n+1)\alpha_{n+1}\gamma_{n+1}^*+2\lambda(n+1)\alpha_{n+1}\langle x_n-x_n^*,x_{n+1}^*-x_n^*\rangle\\&+\lambda n\alpha_n\gamma_n^*-2\lambda n \alpha_n\langle x_{n-1}-x_{n-1}^*,x_n^*-x_{n-1}^*\rangle.
\end{aligned}\label{eq:in_E1}
\end{equation}
Observe that the second claim of Lemma \ref{lem:tech2} guarantees that:\footnotesize
\begin{equation}
\begin{gathered}
-\lambda n w_{n+1}+\lambda n(h_n-h_{n+1})+\lambda n\alpha_n(h_n-h_{n-1})+\lambda n (\alpha_n+\alpha_n^2)\delta_n-\lambda n \gamma_{n+1}^*-\lambda n \alpha_n\gamma_n^*\\+2\lambda n\alpha_n\langle x_{n-1}-x_{n-1}^*,x_n^*-x_{n-1}^*\rangle-2\lambda n \langle x_{n+1}-x_{n+1}^*,x_{n+1}^*-x_n^*\rangle\leqslant0.
\end{gathered}
\label{eq:ineq_lnw}
\end{equation}\normalsize
Adding inequality \eqref{eq:ineq_lnw} to \eqref{eq:in_E1} leads to
\begin{equation}
\begin{aligned}
\mathcal{E}_{n+1}-\mathcal{E}_n\leqslant &((2-\lambda)n+1)w_{n+1}+((n+1)\alpha_{n+1}((n+1)\alpha_{n+1}+\lambda)+\lambda(n+1)\alpha_{n+1}^2-n^2)\delta_{n+1}\\
&-\lambda\left(\lambda+(n+1)\alpha_{n+1}+\frac{\xi}{\lambda}-n\right)(h_n-h_{n+1})+\mathcal{X}_n^*,
\end{aligned}
\end{equation}
where \begin{equation*}\begin{gathered}\mathcal{X}_n^*=(-\lambda (n+1)\alpha_{n+1}-\lambda n)\gamma_{n+1}^*+2\lambda(n+1)\alpha_{n+1}\langle x_n-x_n^*,x_{n+1}^*-x_n^*\rangle\\-2\lambda n \langle x_{n+1}-x_{n+1}^*,x_{n+1}^*-x_n^*\rangle.\end{gathered}\end{equation*}
Observe that since $X^*$ is a closed convex set, $\langle x_n-x_n^*,x_{n+1}^*-x_n^*\rangle\leqslant0$ and $\langle x_{n+1}-x_{n+1}^*,x_{n+1}^*-x_n^*\rangle\geqslant0$. Hence, for any $n\in\N$, $\mathcal{X}_n^*\leqslant0$.
By choosing $\xi=\lambda(\lambda+1-\alpha)$, we then obtain that:
\begin{equation}
\mathcal{E}_{n+1}-\mathcal{E}_n\leqslant ((2-\lambda)n+1)w_{n+1}+A_1(n)\delta_{n+1}+A_2(n)(h_n-h_{n+1}),
\end{equation}
where:
\begin{itemize}
\item $A_1(n)=2(\lambda+1-\alpha)n+\frac{n^2(3\alpha^2-3\alpha\lambda-2\alpha+2\lambda+1)+n(2\alpha^3-2\alpha^2\lambda+2\alpha^2-2\alpha\lambda-2\alpha+4\lambda+2)+1+2\lambda+\alpha\lambda}{(n+1+\alpha)^2}$,
\item $A_2(n)=-2\lambda(\lambda+1-\alpha)-\frac{\alpha^2\lambda}{n+1+\alpha}$.
\end{itemize}
Note that by rewriting \eqref{eq:dev_bn} we get that:
\begin{equation}
h_{n-1}-h_n=-\frac{1}{\lambda n\alpha_n}b_n+\frac{\lambda}{n\alpha_n}h_n+\frac{n\alpha_n+\lambda}{\lambda}\delta_n-\gamma_n^*+2\langle x_{n-1}-x_{n-1}^*,x_n^*-x_{n-1}^*\rangle.
\end{equation}
This ensures that:\small
\begin{equation}
\begin{aligned}
\mathcal{E}_{n+1}-\mathcal{E}_n\leqslant &((2-\lambda)n+1)w_{n+1}-\frac{A_2(n)}{\lambda(n+1)\alpha_{n+1}}b_{n+1}+\frac{\lambda A_2(n)}{(n+1)\alpha_{n+1}}h_{n+1}\\&+\left(A_1(n)+\frac{(n+1)\alpha_{n+1}+\lambda}{\lambda}A_2(n)\right)\delta_{n+1}\\&-A_2(n)\left(\gamma_{n+1}^*-2\langle x_n-x_n^*,x_{n+1}^*-x_n^*\rangle\right),
\end{aligned}
\end{equation}\normalsize
which is the desired inequality.\\ \qed

\subsection{Proof of Lemma \ref{lem:ineq_flat2}}
\label{sec:proof_tech_flat2}
Let $\xi=\lambda(\lambda+1-\alpha)$. Let $\mathcal{J}_n=n^p\mathcal{E}_n$ with $p=1+\frac{4}{\gamma-2}$. Elementary computations show that:
\begin{equation}
\mathcal{J}_{n+1}-\mathcal{J}_n=n^p\left(\mathcal{E}_{n+1}-\mathcal{E}_n\right)+\left((n+1)^p-n^p\right)\mathcal{E}_{n+1}.
\end{equation}
Observe that for any $n\in\N$, $(n+1)^p-n^p\in\left[pn^{p-1},p(n+1)^{p-1}\right]$. Therefore, if we make the assumption that $\lambda\leqslant \alpha-1$, we obtain that $\xi\leqslant 0$ and:
\begin{equation}
\begin{aligned}
\left((n+1)^p-n^p\right)\mathcal{E}_{n+1}\leqslant &~p(n+1)^{p+1}w_{n+1}+p(n+1)^{p-1}b_{n+1}+p\xi n^{p-1}h_{n+1}\\&+p\lambda(n+1)^p\alpha_{n+1}^2\delta_{n+1}. 
\end{aligned}
\end{equation} 
By applying Lemma \ref{lem:ineq_flat1} and the above inequality we get that:
\begin{equation}
\begin{aligned}
\mathcal{J}_{n+1}-\mathcal{J}_n\leqslant &~\left(n^p\left((2-\lambda)n+1\right)+p(n+1)^{p+1}\right)w_{n+1}\\
&+\left(n^p B_1(n)+p(n+1)^{p-1}\right)b_{n+1}\\
&+\left(n^p B_2(n)+p\xi n^{p-1}\right)h_{n+1}\\
&+\left(n^p B_3(n)+p\lambda(n+1)^p\alpha_{n+1}^2\right)\delta_{n+1}\\
&-n^p B_4(n)\left(\gamma_{n+1}^*-2\langle x_n-x_n^*,x_{n+1}^*-x_n^*\rangle\right).
\end{aligned}
\end{equation}
The inequality $\|u\|^2\leqslant 2\|u+v\|^2+2\|v\|^2$ applied at $u=\alpha_n(x_n-x_{n-1})$ and $v=\lambda(x_n-x_n^*)$ ensures that:
\begin{equation}
\delta_n\leqslant \frac{2}{n^2\alpha_n^2}b_n+\frac{2\lambda^2}{n^2\alpha_n^2}h_n.
\end{equation}
Thus, 
\begin{equation}
\begin{aligned}
\mathcal{J}_{n+1}-\mathcal{J}_n\leqslant &~\left(n^p\left((2-\lambda)n+1\right)+p(n+1)^{p+1}\right)w_{n+1}\\
&+\left(n^p B_1(n)+p(n+1)^{p-1}+2\frac{n^p|B_3(n)|+p\lambda(n+1)^p\alpha_{n+1}^2}{(n+1)^2\alpha_{n+1}^2}\right)b_{n+1}\\
&+\left(n^p B_2(n)+p\xi n^{p-1}+2\lambda^2\frac{n^p|B_3(n)|+p\lambda(n+1)^p\alpha_{n+1}^2}{(n+1)^2\alpha_{n+1}^2}\right)h_{n+1}\\
&-n^p B_4(n)\left(\gamma_{n+1}^*-2\langle x_n-x_n^*,x_{n+1}^*-x_n^*\rangle\right).
\end{aligned}
\end{equation}
By replacing $\xi$ by its value and reorganizing each term, we get to the conclusion.\\ \qed

\subsection{Proof of Lemma \ref{lem:lemma_FISTA_AB}}
\label{sec:proof_lemma_FISTA_AB}
Let $(A,B)\in\R^2$. Elementary computations show that for any $n\in\N^*$,
\begin{equation*}
h_{n-1}-h_n-\delta_n=-2\langle x_n-x_{n-1},x_n-x_n^*\rangle+2\langle x_{n-1}-x_{n-1}^*,x_n^*-x_{n-1}^*\rangle-\gamma_n^*.
\end{equation*}
Consequently, for any $n\in\N^*$,
\begin{equation*}
\begin{aligned}
A\delta_n+B(h_{n-1}-h_n)&=(A+B)\delta_n+B(h_{n-1}-h_n-\delta_n)\\
&=(A+B)\delta_n-2B\langle x_n-x_{n-1},x_n-x_n^*\rangle\\&+2B\langle x_{n-1}-x_{n-1}^*,x_n^*-x_{n-1}^*\rangle-B\gamma_n^*\\
&\leqslant (A+B)\delta_n+2|B|\left|\langle x_n-x_{n-1},x_n-x_n^*\rangle\right|\\&+2B\langle x_{n-1}-x_{n-1}^*,x_n^*-x_{n-1}^*\rangle-B\gamma_n^*.
\end{aligned}
\end{equation*}
Moreover, note that for any $n\in\N^*$ and $\theta>0$:
\begin{equation}
2|\langle x_n-x_{n-1},x_n-x_n^*\rangle|\leqslant \frac{h_n}{\theta}+\theta\delta_n.  
\end{equation}
Hence,
\begin{equation*}
A\delta_n+B(h_{n-1}-h_n)\leqslant (A+B+\theta|B|)\delta_n+\frac{|B|}{\theta}h_n+2B\langle x_{n-1}-x_{n-1}^*,x_n^*-x_{n-1}^*\rangle-B\gamma_n^*.`
\end{equation*}
We define $b_n:=\|\lambda(x_{n-1}-x_{n-1}^*)+n(x_n-x_{n-1})\|^2$. By developing the expression of $b_n$ we get that:
\begin{equation*}
\begin{aligned}
b_n&=\|\lambda(x_n-x_n^*)+(n-\lambda)(x_n-x_{n-1})+\lambda(x_n^*-x_{n-1}^*)\|^2\\
&=\|\lambda(x_n-x_n^*)+(n-\lambda)(x_n-x_{n-1})\|^2+\lambda^2\gamma_n^*\\&+2\lambda^2\langle x_n-x_n^*,x_n^*-x_{n-1}^*\rangle+2\lambda(n-\lambda)\langle x_n-x_{n-1},x_n^*-x_{n-1}^*\rangle.
\end{aligned}
\end{equation*}
By applying the following inequality to $u=(n-\lambda)(x_n-x_{n-1})$ and $v=\lambda(x_n-x_n^*)$:
\begin{equation*}
\|u\|^2\leqslant2\|u+v\|^2+2\|v\|^2,
\end{equation*}
it comes that:
\begin{equation*}
\begin{aligned}
(n-\lambda)^2\delta_n&\leqslant 2\|\lambda(x_n-x_n^*)+(n-\lambda)(x_n-x_{n-1})\|^2+2\lambda^2h_n\\
&\leqslant 2b_n+\frac{8\alpha^2}{9}h_n-\Delta_n^*,
\end{aligned}
\end{equation*}
where $\Delta_n^*=2\left(\lambda^2\gamma_n^*+2\lambda^2\langle x_n-x_n^*,x_n^*-x_{n-1}^*\rangle+2\lambda(n-\lambda)\langle x_n-x_{n-1},x_n^*-x_{n-1}^*\rangle\right)$. As $\Delta_n^*\geqslant0$ we get the first claim of the lemma i.e.
\begin{equation}
\forall n>\lambda,\quad \delta_n\leqslant\frac{2}{(n-\lambda)^2}b_n+\frac{8\alpha^2}{9(n-\lambda)^2}h_n.
\end{equation}
This inequality implies that for any $n>\lambda$,
\footnotesize
\begin{equation*}
\begin{aligned}
A\delta_n+B(h_{n-1}-h_n)&\leqslant (|A+B|+\theta|B|)\frac{2}{(n-\lambda)^2}b_n+\left((|A+B|+\theta|B|)\frac{8\alpha^2}{9(n-\lambda)^2}+\frac{|B|}{\theta}\right)h_n\\&+2B\langle x_{n-1}-x_{n-1}^*,x_n^*-x_{n-1}^*\rangle-B\gamma_n^*.
\end{aligned}
\end{equation*}\normalsize
As $F$ satisfies $\mathcal{G}^2_\mu$, we can write that $h_n\leqslant \frac{w_n}{s\mu}$ and thus,
\footnotesize
\begin{equation*}
\begin{aligned}
A\delta_n+B(h_{n-1}-h_n)&\leqslant (|A+B|+\theta|B|)\frac{2}{(n-\lambda)^2}b_n+\left((|A+B|+\theta|B|)\frac{8\alpha^2}{9s\mu(n-\lambda)^2}+\frac{|B|}{s\mu\theta}\right)w_n\\&+2B\langle x_{n-1}-x_{n-1}^*,x_n^*-x_{n-1}^*\rangle-B\gamma_n^*.
\end{aligned}
\end{equation*}\normalsize
By choosing $\theta=\frac{1}{\sqrt{2s\mu}}$ we can conclude that:\footnotesize
\begin{equation*}
\begin{aligned}
A\delta_n+B(h_{n-1}-h_n)&\leqslant \left(2|A+B|+\frac{\sqrt{2}|B|}{\sqrt{s\mu}}\right)\frac{1}{(n-\lambda)^2}b_n+\left(\left(2|A+B|+\frac{\sqrt{2}|B|}{\sqrt{s\mu}}\right)\frac{4\alpha^2}{9s\mu(n-\lambda)^2}+\frac{\sqrt{2}|B|}{\sqrt{s\mu}}\right)w_n\\&+2B\langle x_{n-1}-x_{n-1}^*,x_n^*-x_{n-1}^*\rangle-B\gamma_n^*,
\end{aligned}
\end{equation*}\normalsize
and hence,
\begin{equation*}
\begin{aligned}
A\delta_n+B(h_{n-1}-h_n)\leqslant&\left(2|A+B|+\frac{\sqrt{2}|B|}{\sqrt{s\mu}}\right)\left(1+\frac{4\alpha^2}{9s\mu n^2}\right)\frac{E_n}{(n-\lambda)^2}\\&-B\gamma_n^*+2B\langle x_{n-1}-x_{n-1}^*,x_n^*-x_{n-1}^*\rangle.
\end{aligned}
\end{equation*}\\\qed

\subsection{Proof of Lemma \ref{lem:right-diff}}
\label{sec:proof_right_diff}

Let $\phi'$ denote the derivative of $\phi$ when it is well defined. According to \cite{young1914note}, the function $\phi$ is differentiable except at a countable set of points. This implies that there exists $(t_i)_{i\in\llbracket 1,N\rrbracket}$ and $N\in\N^*\cup\{+\infty\}$ such that for any $i\in \llbracket 0,N-1\rrbracket$ and $t\in (t_i,t_{i+1})$, $\phi'(t)$ is well defined and equal to $\phi_+(t)$. We suppose that the sequence is ordered such that $t_0<t_i< t_{i+1}$ for any $i$ and that $t_N=+\infty$ when $N\neq+\infty$. \\
Suppose that $t\in (t_0,t_1)$. 
\begin{itemize}
\item If $\phi$ is differentiable at $t_0$, then $\phi$ is differentiable on the interval $[t_0,t_1)$ and $\phi'=\phi_+$ in this interval. Consequently inequality \eqref{eq:ineq_phi+} ensures that,
\begin{equation*}
\phi(t)\leqslant \phi(t_0)+\int_{t_0}^t\psi(u)du.
\end{equation*}
\item If $\phi$ is not differentiable at $t_0$, then inequality \eqref{eq:ineq_phi+} guarantees that for $h>0$ sufficiently small,
\begin{equation*}
\phi(t_0+h)\leqslant \phi(t_0)+h\psi(t_0).
\end{equation*}
Then, the previous discussion allows us to say that $\phi$ is differentiable on $[t_0+h, t_1)$. As a consequence, we can say that there exists $H\in(0,t-t_0)$ such that for any $h\in(0,H)$:
\begin{equation*}
\phi(t)\leqslant \phi(t_0+h)+\int_{t_0+h}^t\psi(u)du\leqslant \phi(t_0)+\int_{t_0}^t\psi(u)du+\int_{t_0}^{t_0+h}\left(\psi(t_0)-\psi(u)\right)du.
\end{equation*}
As this inequality is valid for any $h\in(0,H)$, we finally get the wanted inequality \eqref{eq:maj_semidif}.
\end{itemize}
We now suppose that $t=t_1$. We just proved that \eqref{eq:maj_semidif} is true for all $t\in(t_0,t_1)$. Therefore, for all $t\in (t_0,t_1)$,
\begin{equation*}
\phi(t)\leqslant \phi(t_0)+\int_{t_0}^{t_1}\psi(u)du,
\end{equation*}
and as $\phi$ is continuous we get the same inequality at $t=t_1$.\\
By using the same arguments, we can prove that \eqref{eq:maj_semidif} is valid for any $t>t_1$. Indeed, if $t>t_1$, then it means that $t\in (t_i,t_{i+1})$ or that $t=t_i$ for some $i\in\llbracket 1,N\rrbracket$. In both cases, we get the wanted inequality by applying the above reasonings to the consecutive intervals $(t_j,t_{j+1})$ for $0\leqslant j\leqslant i$.\\\qed

\subsection*{Acknowledgements}

This work was supported by PEPR PDE-AI and the ANR Masdol (grant ANR-PRC-CE23). HL acknowledges the financial support of the Ministry of Education, University and Research (grant ML4IP R205T7J2KP).

 \bibliographystyle{abbrv}
\bibliography{ref.bib}

\begin{thebibliography}{10}

\bibitem{apidopoulos2021convergence}
V.~Apidopoulos, J.-F. Aujol, C.~Dossal, and A.~Rondepierre.
\newblock Convergence rates of an inertial gradient descent algorithm under
  growth and flatness conditions.
\newblock {\em Mathematical Programming}, 187(1):151--193, 2021.

\bibitem{attouch2017asymptotic}
H.~Attouch and A.~Cabot.
\newblock Asymptotic stabilization of inertial gradient dynamics with
  time-dependent viscosity.
\newblock {\em Journal of Differential Equations}, 263:5412--5458, 2017.

\bibitem{attouch2018convergence}
H.~Attouch and A.~Cabot.
\newblock Convergence rates of inertial forward-backward algorithms.
\newblock {\em SIAM Journal on Optimization}, 28(1):849--874, 2018.

\bibitem{attouch2018fast}
H.~Attouch, Z.~Chbani, J.~Peypouquet, and P.~Redont.
\newblock Fast convergence of inertial dynamics and algorithms with asymptotic
  vanishing viscosity.
\newblock {\em Mathematical Programming}, 168(1):123--175, 2018.

\bibitem{attouch2016rate}
H.~Attouch and J.~Peypouquet.
\newblock The rate of convergence of nesterov's accelerated forward-backward
  method is actually faster than 1/k\^{}2.
\newblock {\em SIAM Journal on Optimization}, 26(3):1824--1834, 2016.

\bibitem{aujol2024heavyballmomentumnonstrongly}
J.-F. Aujol, C.~Dossal, H.~Labarri{\`e}re, and A.~Rondepierre.
\newblock Heavy ball momentum for non-strongly convex optimization.
\newblock {\em arXiv preprint arXiv:2403.06930}, 2024.

\bibitem{aujol2019optimal}
J.-F. Aujol, C.~Dossal, and A.~Rondepierre.
\newblock Optimal convergence rates for nesterov acceleration.
\newblock {\em SIAM Journal on Optimization}, 29(4):3131--3153, 2019.

\bibitem{aujol2023fista}
J.-F. Aujol, C.~Dossal, and A.~Rondepierre.
\newblock {FISTA} is an automatic geometrically optimized algorithm for
  strongly convex functions.
\newblock {\em Mathematical Programming}, 204(1):449–491, 2024.

\bibitem{beck2017first}
A.~Beck.
\newblock {\em First-order methods in optimization}.
\newblock SIAM, 2017.

\bibitem{beck2009fast}
A.~Beck and M.~Teboulle.
\newblock A fast iterative shrinkage-thresholding algorithm for linear inverse
  problems.
\newblock {\em SIAM journal on imaging sciences}, 2(1):183--202, 2009.

\bibitem{Bolte2006}
J.~Bolte, A.~Daniilidis, and A.~Lewis.
\newblock The {{\L}}ojasiewicz inequality for nonsmooth subanalytic functions
  with applications to subgradient dynamical systems.
\newblock {\em SIAM Journal on Optimization}, 17(4):1205--1223, 2007.

\bibitem{Bolte2007loja}
J.~Bolte, A.~Daniilidis, A.~Lewis, and M.~Shiota.
\newblock Clarke subgradients of stratifiable functions.
\newblock {\em SIAM Journal on Optimization}, 18(2):556--572, 2007.

\bibitem{bonnans1998sensitivity}
J.~F. Bonnans, R.~Cominetti, and A.~Shapiro.
\newblock Sensitivity analysis of optimization problems under second order
  regular constraints.
\newblock {\em Mathematics of Operations Research}, 23(4):806--831, 1998.

\bibitem{Brezis}
H.~Brezis.
\newblock {\em Functional Analysis, Sobolev Spaces and Partial Differential
  Equations}.
\newblock Springer, 2010.

\bibitem{chambolle2015convergence}
A.~Chambolle and C.~Dossal.
\newblock On the convergence of the iterates of the “fast iterative
  shrinkage/thresholding algorithm”.
\newblock {\em Journal of Optimization theory and Applications},
  166(3):968--982, 2015.

\bibitem{combettes2005signal}
P.~L. Combettes and V.~R. Wajs.
\newblock Signal recovery by proximal forward-backward splitting.
\newblock {\em Multiscale modeling \& simulation}, 4(4):1168--1200, 2005.

\bibitem{daubechies2004iterative}
I.~Daubechies, M.~Defrise, and C.~De~Mol.
\newblock An iterative thresholding algorithm for linear inverse problems with
  a sparsity constraint.
\newblock {\em Communications on Pure and Applied Mathematics: A Journal Issued
  by the Courant Institute of Mathematical Sciences}, 57(11):1413--1457, 2004.

\bibitem{garrigos2022convergence}
G.~Garrigos, L.~Rosasco, and S.~Villa.
\newblock Convergence of the forward-backward algorithm: beyond the worst-case
  with the help of geometry.
\newblock {\em Mathematical Programming}, pages 1--60, 2022.

\bibitem{hiriart1982points}
J.-B. Hiriart-Urruty.
\newblock At what points is the projection mapping differentiable?
\newblock {\em The American Mathematical Monthly}, 89(7):456--458, 1982.

\bibitem{Kim2016}
D.~Kim and J.~A. Fessler.
\newblock Optimized first-order methods for smooth convex minimization.
\newblock {\em Mathematical Programming}, 159(1):81--107, Sep 2016.

\bibitem{kim2018adaptive}
D.~Kim and J.~A. Fessler.
\newblock Adaptive restart of the optimized gradient method for convex
  optimization.
\newblock {\em Journal of Optimization Theory and Applications},
  178(1):240--263, 2018.

\bibitem{li2022linear}
B.~Li, B.~Shi, and Y.-x. Yuan.
\newblock Linear convergence of ista and fista.
\newblock {\em arXiv preprint arXiv:2212.06319}, 2022.

\bibitem{li2015accelerated}
H.~Li and Z.~Lin.
\newblock Accelerated proximal gradient methods for nonconvex programming.
\newblock {\em Advances in neural information processing systems}, 28, 2015.

\bibitem{loja63}
S.~{\L}ojasiewicz.
\newblock Une propri{\'e}t{\'e} topologique des sous-ensembles analytiques
  r{\'e}els.
\newblock In {\em Les \'{E}quations aux {D}{\'e}riv{\'e}es {P}artielles
  ({P}aris, 1962)}, pages 87--89. {\'E}ditions du Centre National de la
  Recherche Scientifique, Paris, 1963.

\bibitem{loja93}
S.~{\L}ojasiewicz.
\newblock Sur la g{\'e}om{\'e}trie semi- et sous-analytique.
\newblock {\em Annales de l'Institut Fourier. Universit{\'e} de Grenoble},
  43(5):1575--1595, 1993.

\bibitem{Luo2023}
J.-R. Luo and T.-J. Xiao.
\newblock Optimal convergence rates for damped inertial gradient dynamics with
  flat geometries.
\newblock {\em Applied Mathematics {\&} Optimization}, 87(3):53, Mar 2023.

\bibitem{nesterov2003introductory}
Y.~Nesterov.
\newblock {\em Introductory lectures on convex optimization: A basic course},
  volume~87.
\newblock Springer Science \& Business Media, 2003.

\bibitem{nesterov2007gradient}
Y.~Nesterov.
\newblock Gradient methods for minimizing composite objective function. core
  discussion papers 2007076, universit{\'e} catholique de louvain.
\newblock {\em Center for Operations Research and Econometrics (CORE)}, 1:4--4,
  2007.

\bibitem{nesterov1983method}
Y.~E. Nesterov.
\newblock A method for solving the convex programming problem with convergence
  rate $ o (1/k^2) $.
\newblock In {\em Dokl. Akad. Nauk SSSR,}, volume 269, pages 543--547, 1983.

\bibitem{polyak1964some}
B.~T. Polyak.
\newblock Some methods of speeding up the convergence of iteration methods.
\newblock {\em Ussr computational mathematics and mathematical physics},
  4(5):1--17, 1964.

\bibitem{sebbouh2020convergence}
O.~Sebbouh, C.~Dossal, and A.~Rondepierre.
\newblock Convergence rates of damped inertial dynamics under geometric
  conditions and perturbations.
\newblock {\em SIAM Journal on Optimization}, 30(3):1850--1877, 2020.

\bibitem{shapiro2016differentiability}
A.~Shapiro.
\newblock Differentiability properties of metric projections onto convex sets.
\newblock {\em Journal of Optimization Theory and Applications},
  169(3):953--964, 2016.

\bibitem{su2014differential}
W.~Su, S.~Boyd, and E.~Candes.
\newblock A differential equation for modeling nesterov’s accelerated
  gradient method: theory and insights.
\newblock {\em Advances in neural information processing systems}, 27, 2014.

\bibitem{Tao2016local}
S.~Tao, D.~Boley, and S.~Zhang.
\newblock Local linear convergence of ista and fista on the lasso problem.
\newblock {\em SIAM Journal on Optimization}, 26(1):313--336, 2016.

\bibitem{young1914note}
G.~C. Young.
\newblock A note on derivates and differential coefficients.
\newblock {\em Acta mathematica}, 37(1):141--154, 1914.

\end{thebibliography}

\end{document}